\newcommand{\eps}{\varepsilon}
\newcommand{\N}{\mathbb{N}}
\newcommand{\R}{\mathbb{R}}
\newcommand{\Rn}{\R^n}
\newcommand{\nat}{\in\N}
\newcommand{\one}{1\!\!1}
\newcommand{\ny}{\nu}
\newcommand{\my}{\mu}
\newcommand{\phii}{\varphi}
\newcommand{\col}{\colon}
\newcommand{\Om}{\Omega}
\newcommand{\na}{\nabla}
\newcommand{\Laplace}{\Delta}
\newcommand{\Lap}{\Laplace}
\newcommand{\ctilde}{\widetilde{c}}
\newcommand{\ttilde}{\widetilde{t}}
\newcommand{\utilde}{\widetilde{u}}
\newcommand{\vtilde}{\widetilde{v}}
\newcommand{\Ctilde}{\widetilde{C}}
\newcommand{\Mtilde}{\widetilde{M}}
\newcommand{\Ttilde}{\widetilde{T}}
\newcommand{\epstilde}{{\widetilde{\eps}}}
\newcommand{\mhat}{\widehat{m}}
\newcommand{\uhat}{\widehat{u}}
\newcommand{\vhat}{\widehat{v}}
\newcommand{\Chat}{\widehat{C}}
\newcommand{\Qbar}{\overline{Q}}
\newcommand{\rand}{\del\Omega}
\newcommand{\Ombar}{\overline{\Om}}
\newcommand{\amrand}{|_{\rand}}
\newcommand{\vonbis}[2]{|_{#1}^{#2}}
\newcommand{\del}{\partial}
\newcommand{\delny}{\partial_\ny}
\newcommand{\intom}{\int_\Om}
\newcommand{\intnr}{\int_0^r}
\newcommand{\intnT}{\int_0^T}
\newcommand{\intnt}{\int_0^t}
\newcommand{\inttaut}{\int_\tau^t}
\newcommand{\intntn}{\int_0^{t_0}}
\newcommand{\intnR}{\int_0^R}
\newcommand{\intdlR}{\int_\delta^R}
\newcommand{\intnh}{\int_0^h}
\newcommand{\inttth}{\int_t^{t+h}}
\newcommand{\inthtnh}{\int_h^{t_0+h}}
\newcommand{\inttntnh}{\int_{t_0}^{t_0+h}}
\newcommand{\inttntnd}{\int_{t_0}^{t_0+\delta}}
\newcommand{\intn}[1]{\int_0^{#1}}
\newcommand{\Tmax}{T_{max}}
\newcommand{\Liom}{L^\infty(\Om)}
\newcommand{\Lqom}{L^q(\Om)}
\newcommand{\Lpom}{L^p(\Om)}
\newcommand{\Lzom}{L^2(\Om)}
\newcommand{\Leom}{L^1(\Om)}
\newcommand{\LeQT}{L^1(Q_T)}
\newcommand{\Li}{L^\infty}
\newcommand{\Weqom}{W^{1,q}(\Om)}
\newcommand{\Wzqom}{W^{2,q}(\Om)}
\newcommand{\Wezom}{W^{1,2}(\Om)}
\newcommand{\WeiR}{W^{1,\infty}(\R)}
\newcommand{\Weeom}{W^{1,1}(\Om)}
\newcommand{\LiomnTn}{L^\infty(\Om\times(0,T_0))}
\newcommand{\Liomntn}{L^\infty(\Om\times(0,t_0))}
\newcommand{\Liomnt}{L^\infty(\Om\times(0,t))}
\newcommand{\Liomtaut}{L^\infty(\Om\times(\tau,t))}
\newcommand{\LilocNTmaxWeqom}{L^\infty_{loc}([0,\Tmax);\Weqom)}
\newcommand{\LilocnTWeqom}{L^\infty_{loc}((0,T);\Weqom)}
\newcommand{\LiNTX}[1]{L^\infty([0,T];#1)}
\newcommand{\LpnTX}[1]{L^p((0,T);#1)}
\newcommand{\LiQT}{L^\infty(Q_T)}
\newcommand{\LeomnT}{L^1(\Om\times(0,T))}
\newcommand{\Combar}{C(\Ombar)}
\newcommand{\Ceombar}{C^1(\Ombar)}
\newcommand{\CombarNTmax}{C(\Ombar\times[0,\Tmax))}
\newcommand{\CzombarNTmax}{C^2(\Ombar\times[0,\Tmax)}
\newcommand{\Cninf}{C_0^\infty}
\newcommand{\Cinfty}{C^\infty}
\newcommand{\Cinf}{\Cinfty}
\newcommand{\ddt}{\frac{\rm d}{{\rm d}t}}
\newcommand{\ddr}{\frac{\rm d}{{\rm d}r}}
\newcommand{\norm}[2][]{\|#2\|_{#1}}
\newcommand{\weakto}{\rightharpoonup}
\newcommand{\weakstarto}{\overset{*}{\weakto}}
\newcommand{\wstarto}{\weakstarto}
\newcommand{\upto}{\nearrow}
\newcommand{\downto}{\searrow}
\newcommand{\embeddedinto}{\hookrightarrow}
\newcommand{\eingebettetin}{\embeddedinto}
\newcommand{\kompakteingebettetin}{\embeddedinto\embeddedinto}
\newcommand{\kpteingebettetin}{\kompakteingebettetin}
\newcommand{\limsuptTmax}{\limsup_{t\upto \Tmax}}
\newcommand{\heatgr}[1]{e^{#1 \Delta}}
\newcommand{\inv}{^{-1}}
\newcommand{\ohne}{\setminus}
\newcommand{\set}[1]{\{#1\}}
\newcommand{\setl}[1]{\left\{#1\right\}}
\newcommand{\weqsoln}{$W^{1,q}$-solution}
\newtheorem{theorem}{Theorem}
\newtheorem{lemma}[theorem]{Lemma}
\newtheorem{corollary}[theorem]{Corollary}
\newtheorem{proposition}[theorem]{Proposition}
\newtheorem{remark}[theorem]{Remark}
\newtheorem{thm}[theorem]{Theorem}
\newtheorem{cor}[theorem]{Corollary}
\newtheorem{defn}[theorem]{Definition}
\author{Johannes Lankeit\thanks{Institut f\"ur Mathematik, Universit\"at Paderborn, Warburger Str. 100, 33098 Paderborn, Germany; email: \mbox{johannes.lankeit@math.upb.de}}}
\title{Chemotaxis can prevent thresholds on population density}
\begin{document}
 \maketitle

\begin{abstract}
We define and (for $q>n$) prove uniqueness and an extensibility property of $W^{1,q}$-solutions to 
\begin{align*}
 u_t&=-\na(u\na v)+\kappa u-\my u^2\\
0&=\Lap v-v+u\\
\delny v\amrand &=\delny u\amrand=0 ,\qquad u(0,\cdot)=u_0,
\end{align*}
in balls in $\R^n$, which we then use to obtain a criterion guaranteeing some kind of structure formation in a corresponding chemotaxis system - thereby extending recent results of Winkler \cite{winkler_14_ctexceed} to the higher dimensional (radially symmetric) case.\\
{\bf Keywords: }chemotaxis, logistic source, blow-up, hyperbolic-elliptic system\\
{\bf AMS Classification: }35K55 (primary), 35B44 (secondary), 35A01, 35A02, 35Q92, 92C17
\end{abstract}
%Keywords: chemotaxis, logistic source, blow-up, hyperbolic-elliptic system
%35K55 (primary) Nonlinear parabolic equations 
%35B44 (secondary) Blow-up 
%35A01 Existence problems: global existence, local existence, non-existence
%35A02 Uniqueness problems: global uniqueness, local uniqueness, non-uniqueness 
%35Q92  PDEs in connection with biology and other natural sciences 
%92C17  Cell movement (chemotaxis, etc.) 
%gibts nur bis 09: %35A07  Local existence and uniqueness theorems

\section{Introduction}
\label{sec:intro}
The Keller-Segel model
\begin{align*}
\label{eq:KS_classic}
u_t&=\na\cdot(D_2 u)-\na\cdot(D_1\na v)\\
v_t&=D \Lap v-k(v)v +u f(v) 
 \tag{KS}
\end{align*}
of chemotaxis has been introduced by Keller and Segel in \cite{keller_segel_70} to model the aggregation of bacteria (for instance, of the species {\it Dictyostelium discoideum}, with density denoted by $u$) in the presence of a signalling substance (cAMP, with density $v$) they emit in case of food scarceness. Their movement is governed by random diffusion and chemotactically directed motion towards higher concentrations of cAMP. 
The Keller-Segel model or variants thereof, as for example
\begin{align*}
 u_t&=\Lap u - \na\cdot(u\na v)\\
 \tau v_t&=\Lap v - v+u 
\end{align*}
where all functions appearing in \eqref{eq:KS_classic} have a simple %, prototypical 
form and 
diffusion of the signalling substance is assumed to occur fast (instantaneously if $\tau=0$), have been widely used and incorporated in more complicated models in the mathematical depiction of biological phenomena, ranging from pattern formation in {\it E. coli} colonies \cite{aotani_mimura_mollee_10} 
to angiogenesis in early stages of cancer \cite{szymanska_moralesrodrigo_lachowicz_chaplain_09} or HIV-infections \cite{stancevic_angstmann_murray_henry_13}. 
For a survey of the extensive mathematical literature on the subject see the survey articles \cite{hillen_painter_09} or \cite{horstmann_03,horstmann_04}.

Often the occurence of the desired structure formation is identified with the blow-up of solutions to the model in finite time, i.e. the existence of some finite time $T$ such that $\limsup_{t\upto T} \norm[L^\infty]{u(\cdot, t)}=\infty$ -- and the model -- both for $\tau=0$ and $\tau>0$ -- 
 is known to possess such solutions for every sufficiently large initial mass or in space dimensions larger than two, whereas in dimension $2$ for small initial mass all solutions exist globally in time and are bounded \cite{jaeger_luckhaus_92, herrero_velazquez_97,nagai_01,nagai_senba_yoshida_97}. 
Moreover, blow-up of solutions with large enough initial mass has been shown to be a generic phenomenon of the equation in some sense even for the parabolic-parabolic version of the system \cite{mizoguchi_winkler_13,winkler_11_blowuphigherdim}.

Another point of view is that blow-up is ``too much'' and biologically inadequate, at least in some situations. 
Then, for example, terms preventing blow-up are added, e.g. some logistic growth term (cf. for example the tumor models in \cite{andasari_gerisch_lolas_south_chaplain_11} or \cite{stinner_surulescu_winkler_13}), so that the model reads
\begin{align*}
 u_t&=\Lap u - \na\cdot(u\na v) +\kappa u-\my u^2\\
 \tau v_t&=\Lap v - v+u. % RB?.
\end{align*}
For this problem it is known \cite{tello_winkler_07,osaki_tsujikawa_yagi_mimura_02,winkler_10_boundedness} that classical solutions exist globally in time and are bounded if $n\leq 2$ or $\my$ is large (where for $\tau=0$, an explicit condition sufficient for this is $\my>\frac{n-2}n$).
For higher dimensions and small $\my$ the existence or non-existence of exploding solutions is unknown. As \cite{winkler_11_blowupdespiteloggrowth} seems to indicate, superlinear absorption does not necessarily imply global existence.

The important question is: To what extent does the logistic term render the chemotaxis-term innocuous? Does there still emerge some structure?
Recently this question has been answered affirmatively by Winkler \cite{winkler_14_ctexceed} in the one-dimensional case:
If the death rate $\my$ is small enough $(0<\my<1)$, then there is some criterion on (the $L^p$-norm with $p>\frac{1}{1-\my}$ of) the initial data that ensures the existence of some time up to which any threshold of the population density will be surpassed - as long as the bacteria do not diffuse to fast.
% bacteria->amobae?
Of course, the biologically relevant situation is not that of only one space-dimension. 
With the present paper we give an answer to the question whether this phenomenon is 
restricted to this case or if it also occurs in higher dimensions.

We shall confine ourselves to the prototypical radially symmetric setting and in the end obtain 
\begin{thm}
\label{thm:main}
Let $\Omega\subset\Rn$ be a ball.  Let $\kappa\geq 0, \my\in(0,1)$. Then for all $p>\frac1{1-\my}$ there exists $C(p)>0$ satisfying the following:
Whenever $q>n$ and $u_0\in \Weqom$ is nonnegative, radially symmetric and compatible and such that 
\[
 \norm[\Lpom]{u_0}>C(p)\max\setl{\frac1{|\Om|}\intom u_0,\frac\kappa\my},
\]
there is $T>0$ such that to each $M>0$ there corresponds some $\eps_0(M)>0$ with the property that for any $\eps\in(0,\eps_0(M))$ one can find $t_\eps\in(0,T)$ and $x_\eps\in\Om$ such that the solution $(u,v)$ of 
\begin{align}
\label{eq:epsprob}
 u_t&=\eps\Lap u-\na(u\na v)+\kappa u-\my u^2\\
0&=\Lap v-v+u\notag\\
\delny v\amrand&=\delny u\amrand=0,\qquad u(0,\cdot)=u_0,\notag
\end{align}
in $\Om\times(0,\Tmax)$, where $\Tmax\in(0,\infty]$ is its maximal time of existence, satisfies $u(x_\eps,t_\eps)>M$.
\end{thm}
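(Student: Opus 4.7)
The plan is to transfer the statement into a finite-time $\Liom$-blow-up assertion for the limiting ($\eps=0$) hyperbolic-elliptic system and then to lift this back to small $\eps>0$ via a standard continuous-dependence argument.

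\textbf{Step 1: Finite-time blow-up at $\eps=0$.} Let $(u,v)$ denote the $W^{1,q}$-solution of the limit problem (whose uniqueness and extensibility criterion were provided earlier) on its maximal existence interval $[0,\Tmax)$. I would test the first equation against $u^{p-1}$ and integrate by parts, using $\delny v\amrand=0$ and $\Lap v=v-u$, to obtain
\begin{align*}
\tfrac{1}{p}\ddt\intom u^p \ =\ \tfrac{p-1}{p}\intom u^{p+1} - \tfrac{p-1}{p}\intom u^p v + \kappa\intom u^p - \my\intom u^{p+1}.
\end{align*}
Since $p(1-\my)>1$, the coefficient of $\intom u^{p+1}$ is strictly positive. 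I would then tame the nonlocal coupling by a weighted Young inequality combined with elliptic regularity and interpolation for $v=(1-\Lap)\inv u$, arriving at
\begin{align*}
\intom u^p v \ \leq\ \delta \intom u^{p+1} + C(\delta,p)\,\phi\!\left(\intom u\right)
\end{align*}
for any $\delta>0$. Since testing the $u$-equation against $1$ and using Cauchy--Schwarz yields $\ddt\intom u\leq \kappa\intom u-\tfrac{\my}{|\Om|}(\intom u)^2$, one has the a priori bound $\intom u \leq |\Om|\max\set{\bar u_0,\kappa/\my}$ for all times. A power-mean inequality $\intom u^{p+1}\geq |\Om|^{-1/p}(\intom u^p)^{(p+1)/p}$ then converts the identity above into
\begin{align*}
y'(t) \ \geq\ c_1 y(t)^{(p+1)/p} - c_2,\qquad y(t):=\intom u^p(\cdot,t),
\end{align*}
with $c_1=c_1(p)>0$ and $c_2$ controlled by a power of $\max\set{\bar u_0,\kappa/\my}$. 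The hypothesis $\norm[\Lpom]{u_0}>C(p)\max\set{\bar u_0,\kappa/\my}$ is chosen precisely so that $c_1 y(0)^{(p+1)/p}>2c_2$, which forces $y(t)\to\infty$ at some finite $T_0\leq\Tmax$. Because the mass is bounded, $y\to\infty$ must proceed through concentration, so $\norm[\Liom]{u(\cdot,t)}\to\infty$ as $t\upto T_0$, and the extensibility property then yields $\Tmax=T_0<\infty$.

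\textbf{Step 2: Passage $\eps\downarrow 0$.} Set $T:=T_0+1$. Given $M>0$, pick $t^*\in(0,T_0)$ with $\norm[\Liom]{u(\cdot,t^*)}>2M$; on $[0,t^*]$ the limit solution is bounded in $\Weqom$. Writing \eqref{eq:epsprob} via the variation-of-constants formula with the Neumann heat semigroup $e^{\eps t\Lap}$ and running a standard fixed-point / Gronwall scheme would produce uniform-in-$\eps$ bounds on $u_\eps$ in $C(\Ombar\times[0,t^*])$ for $\eps\in(0,1]$, together with $u_\eps\to u$ uniformly on $\Ombar\times[0,t^*]$ as $\eps\downto 0$. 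Choosing $\eps_0(M)$ so small that $\eps<\eps_0(M)$ implies $\norm[\Liom]{u_\eps(\cdot,t^*)-u(\cdot,t^*)}<M$, setting $t_\eps:=t^*$, and picking $x_\eps\in\Om$ near a maximum of $u_\eps(\cdot,t^*)$ yields $u_\eps(x_\eps,t_\eps)>M$.

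\textbf{Anticipated main obstacle.} The decisive work lies in Step~1: producing a genuinely superlinear ODI for $y$ while controlling $\intom u^p v$ tightly enough to preserve the positive sign coming from $p(1-\my)>1$, and at the same time keeping the threshold constant of the structural form $C(p)\max\set{\bar u_0,\kappa/\my}$. Radial symmetry enters here through the pointwise / smoothing estimates for $v$ in the absorption of $\intom u^p v$, and in the spirit of \cite{winkler_14_ctexceed} one may alternatively reformulate the argument in the mass-distribution variable $s\mapsto \int_{B_{s^{1/n}}(0)} u$, reducing the key estimate to a one-dimensional ODE inequality. The $\eps\downarrow 0$ passage in Step~2 is in principle routine, since one stays strictly below the blow-up time of the limit problem, but still requires uniform-in-$\eps$ control of the chemotactic drift.
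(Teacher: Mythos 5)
Your high-level architecture matches the paper's: first show that the strong $W^{1,q}$-solution of the $\eps=0$ limit problem blows up in finite time, then transfer this to small $\eps>0$ using the approximation of the limit solution by solutions of \eqref{eq:epsprob}. Both steps, however, contain gaps.

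\textbf{Step 1.} You write the differential identity obtained by "testing the first equation against $u^{p-1}$." For the hyperbolic-elliptic problem \eqref{eq:limprob}, the solution $u$ is only a strong $W^{1,q}$-solution in the sense of Definition~\ref{def:soln}; it need not be differentiable in time, $u^{p-1}$ is not an admissible test function in \eqref{eq:defsoln}, and $t\mapsto\intom u^p$ is not known a priori to be $C^1$. The paper's Lemma~\ref{thm:ulpgeq} handles this by testing with a Steklov average $\chi_\delta(t)\,h^{-1}\int_{t-h}^t(u+\xi)^{p-1}$, passing to the limits $\delta,h,\xi\downto 0$ in that order, and concludes only the \emph{integral} inequality
$\intom u^p(t)\geq \intom u_0^p+((1-\my)p-1-\eta)\intnt\intom u^{p+1}-B\intnt(\intom u)^{p+1}$,
which is then fed into the integral comparison Lemma~\ref{thm:prepare_blowup_estimate}. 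Your ODI $y'\geq c_1y^{(p+1)/p}-c_2$ presupposes a regularity that is not available. The absorption of $\intom u^p v$ by $\delta\intom u^{p+1}+C(\delta,p)(\intom u)^{p+1}$ is essentially Lemma~\ref{thm:vlppeleule} and is fine.

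\textbf{Step 2.} This is where the more serious gap lies. You claim that "a standard fixed-point / Gronwall scheme would produce uniform-in-$\eps$ bounds on $u_\eps$ in $C(\Ombar\times[0,t^*])$." The fixed-point argument in Lemma~\ref{thm:epslocalexistence} hinges on smoothing estimates for $e^{\eps t\Lap}$, and the associated quantity $C(\Ttilde)=C_1\int_0^{\Ttilde}(1+(\eps(t-s))^{-1/2-n/(2q)})\,ds$ blows up as $\eps\downto 0$, so the contraction time shrinks to zero with $\eps$; this gives $\eps$-\emph{dependent} existence times and $L^\infty$-bounds, not uniform ones. Producing an $\eps$-uniform existence time and $L^\infty$-bound is precisely the technical core of the paper (Lemma~\ref{thm:estimate}, Theorem~\ref{thm:commonexistencetime}), which relies on the radial structure via Lemma~\ref{thm:radialestimate} and on a $W^{1,q}$-Gronwall estimate that is nothing like the standard semigroup scheme. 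Moreover, the convergence $u_\eps\to u$ in $C(\Qbar_T)$ in Lemma~\ref{thm:kgztosolnifbd} comes from an Aubin--Lions compactness argument and \emph{requires} a uniform bound $u_\eps\leq M$ as hypothesis. Your direct plan (fix $t^*<T_0$, then prove $u_\eps$ stays bounded up to $t^*$) therefore has to supply that bound up to $t^*$ by a bootstrap: repeatedly apply Theorem~\ref{thm:commonexistencetime} on short intervals, using Lemma~\ref{thm:kgztosolnifbd} to keep the $W^{1,q}$-data near those of the limit solution at the restart times, until $t^*$ is reached. That can be made rigorous, but is considerably more delicate than "routine." The paper sidesteps all of this by arguing by contradiction: it assumes the theorem fails, which hands it the uniform bound $u_{\eps_j}\leq M$ on $\Om\times(0,T)$ for free, invokes Lemma~\ref{thm:kgztosolnifbd} and uniqueness to identify the limit with $u$, and then contradicts $\limsup_{t\upto T}\norm[\Liom]{u(\cdot,t)}=\infty$ from Theorem~\ref{thm:blowup}. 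I would adopt that contradiction structure rather than the direct continuous-dependence route you sketch.
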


For this purpose we set out to find estimates finally leading to the crucial extensibility criterion \eqref{eq:ext} for solutions of the  ``$\eps=0$-limit'' model 
\begin{align}
\label{eq:limprob}
 u_t&=-\na(u\na v)+\kappa u-\my u^2\\
0&=\Lap v-v+u\notag\\
\delny v\amrand&=\delny u\amrand=0,\qquad u(0,\cdot)=u_0\notag
\end{align}
of \eqref{eq:epsprob} in $\Om\times(0,T)$ for some $T>0$.
The extensibility criterion is analogous to (1.6) of \cite{winkler_14_ctexceed}, that in turn is built upon estimates, some of which heavily rely on one-dimensionality of the problem. 
Cornerstone of our analysis %and main point of deviation from \cite{winkler_14_ctexceed} 
therefore will be Section \ref{sec:estimate}, where we craft 
the inequality which also allows for higher-dimensional and therefore more realistic scenarios.

We will introduce our concept of solutions of \eqref{eq:limprob} in Definition \ref{def:soln} and show their uniqueness -- if $u_0\in\Weqom$ for some $q>n$ -- in Theorem \ref{thm:unique} and the existence of radially symmetric solutions that can be approximated by solutions of \eqref{eq:epsprob} in Theorem \ref{thm:existence}. 

In contrast to the one-dimensional case, we are confronted with the challenge that we cannot, in general, rely on the existence of global classical bounded solutions to the approximate problem.
Hence we prepare these results by finding a common existence time of such solutions -- regardless of the value of $\eps$ (Theorem \ref{thm:commonexistencetime}). 

After collecting some additional boundedness property in Lemma \ref{thm:timederivative},
 we can by a limiting procedure (Lemma \ref{thm:kgztosolnifbd}) turn to solutions to \eqref{eq:limprob}.

If then $\my$ is large enough, a global, in some cases even bounded solution is guaranteed to exist [Prop. \ref{thm:limprobglobal}]. 
However, if this is not the case, any radial solution to \eqref{eq:limprob} with somehow ($L^p$-)large enough initial mass blows up in finite time (Theorem \ref{thm:blowup}).
In combination with the fact that solutions to \eqref{eq:limprob} can be obtained as limits of solutions to \eqref{eq:epsprob}, this yields the announced theorem about nonexistence of thresholds to population density:
If $\my<1$ and $\norm[L^p]{u_0}$ (for $p>\frac1{1-\my}$) is large enough, before some time $T$ any threshold on the population density will be exceeded at least at one point by any population that diffuses slowly enough.
%Note, however, that in spite of the title of \cite{winkler_14_ctexceed}, it is {\bf not} a carrying capacity being exceeded here, as this would mean an $\norm[L^1]{\cdot}$-norm becoming large. \rot{Da bin ich andere Meinung, denn die
%carrying capacity ist f\"ur mich eine kritische {\em Dichte}.}

After the following short section which recalls a few basic properties of solutions to the second equation in \eqref{eq:epsprob} and equation \eqref{eq:epsprob} with $\eps>0$, in Section \ref{sec:parabolic_elliptic} we focus our attention on existence of solutions to \eqref{eq:epsprob} and estimates yielding a common existence time (Theorem \ref{thm:commonexistencetime}) as well as preparing for compactness arguments (by the estimates of Lemma \ref{thm:timederivative} and Corollary \ref{thm:theestimate}).

Section \ref{sec:limprob} will be devoted to definition, uniqueness, existence, estimates and a blow-up result for solutions to \eqref{eq:limprob}, followed in Section \ref{sec:nothreshold} by, finally, the proof of the ``no threshhold'' theorem \ref{thm:main}.

Throughout the article, we assume that $\Om\subset\R^n$ is a bounded domain with smooth boundary. Often we will speak of radially symmetric (or, for short, radial) functions. In this case $\Om=B_R(0)$ is to be understood to be a ball centered in the origin and we will interchange $u(x,t), x\in\Om$, and $u(r,t), r=|x|\in [0,R)$. 

Occasionally, we will abbreviate $\Om\times(0,T)=:Q_T$ for $T>0$.
 We will often identify $u(t)=u(\cdot,t)$ and for the sake of brevity write $u$ instead of $u(x,t)$ or $u(x,s)$. 
%Unless stated otherwise, we assume $kappa\geq 0, \my>0, q>n$.
\section{Preliminaries: The elliptic equation}
\label{sec:secondeq}
In the proofs we will mainly be concerned with $u$, therefore it would be desirable to estimate various terms involving the solution $v$ of
\begin{equation}
 \label{eq:v}
 -\Lap v+v=u \quad \text{in }\Om,\qquad\qquad \delny v\amrand=0 
\end{equation}
or its derivatives in terms of $u$. The following lemmata will be the tools to make this possible:

\begin{lemma}
\label{thm:vlpleu}
 Let $v$ solve \eqref{eq:v}. Then for all $p\in[1,\infty]$, 
\[
 \norm[\Lpom]{v}\leq\norm[\Lpom]u
\]
\end{lemma}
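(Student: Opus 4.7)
The plan is to treat the three regimes $p\in[2,\infty)$, $p=\infty$, and $p=1$ separately, and then use interpolation (or a direct test-function argument) for $p\in(1,2)$. Standard elliptic regularity for the Neumann problem gives enough smoothness of $v$ to justify the integrations by parts below whenever $u$ is sufficiently regular; the general case $u\in L^p(\Om)$ follows by approximation.

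For $p\in[2,\infty)$, I would multiply \eqref{eq:v} by $|v|^{p-2}v$ and integrate over $\Om$. The Neumann boundary condition makes the boundary term vanish in the integration by parts, yielding
\[
(p-1)\intom |v|^{p-2}|\na v|^2 + \intom |v|^p \;=\; \intom u\,|v|^{p-2}v.
\]
Estimating the right-hand side by H\"older's inequality by $\norm[\Lpom]{u}\,\norm[\Lpom]{v}^{p-1}$ and discarding the nonnegative gradient term on the left, one gets $\norm[\Lpom]{v}^p\leq \norm[\Lpom]{u}\norm[\Lpom]{v}^{p-1}$, whence the claim. (For $p\in(1,2)$ the same argument works after replacing $|v|^{p-2}v$ by the regularised $(v^2+\delta^2)^{(p-2)/2}v$ and sending $\delta\downto 0$.)

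For $p=\infty$, I would set $K:=\norm[\Liom]{u}$ and test \eqref{eq:v} with $(v-K)_+$. Integration by parts (using again the Neumann condition) yields
\[
\intom |\na (v-K)_+|^2 + \intom v\,(v-K)_+ \;=\; \intom u\,(v-K)_+ \;\leq\; K\intom (v-K)_+,
\]
so $\intom (v-K)(v-K)_+\leq 0$, forcing $(v-K)_+\equiv 0$; the analogous test with $(-v-K)_+$ gives the lower bound, so $\norm[\Liom]{v}\leq K$.

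For $p=1$, I would exploit linearity: let $v_\pm$ solve \eqref{eq:v} with right-hand side $u_\pm$. The weak maximum principle applied to $v_\pm$ (any interior negative minimum would contradict $-\Lap v_\pm + v_\pm = u_\pm\geq 0$, and a negative boundary minimum is ruled out by the Neumann condition via Hopf's lemma) gives $v_\pm\geq 0$. Integrating the equation and using the Neumann condition to kill $\intom\Lap v_\pm$ yields $\intom v_\pm = \intom u_\pm$, and hence
\[
\norm[\Leom]{v} \;\leq\; \intom v_+ + \intom v_- \;=\; \intom u_+ + \intom u_- \;=\; \norm[\Leom]{u}.
\]
The main subtlety is really only the $p=1$ case, since the $L^p$-test function argument breaks down there; splitting into positive and negative parts is the cleanest workaround.
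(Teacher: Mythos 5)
Your proof is correct, and for the interior range $p\in(1,\infty)$ it is essentially the paper's argument (test with $|v|^{p-2}v$, regularising near $v=0$ when $p<2$; the paper uses the regularisation $v(v^2+\eta)^{p/2-1}$ uniformly for all $p\in(1,\infty)$, a cosmetic difference). Where you genuinely diverge is at the endpoints. The paper simply passes to the limit in the established bound $\norm[\Lpom]{v}\leq\norm[\Lpom]{u}$, using $\norm[\Lpom]{f}\to\norm[\Liom]{f}$ as $p\to\infty$ and $\norm[\Lpom]{f}\to\norm[\Leom]{f}$ as $p\downto 1$ on the bounded domain $\Om$; this is shorter and uniform but leans on those limit facts. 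You instead give direct, self-contained arguments: a Stampacchia-type truncation test with $(v-K)_+$ for $p=\infty$ (this is the comparison argument that also underlies Lemma~\ref{thm:nonneg}), and for $p=1$ the linear split $u=u_+-u_-$ together with nonnegativity of $v_\pm$ and the exact mass identity $\intom v_\pm=\intom u_\pm$ obtained by integrating the equation. The $p=1$ route is particularly clean and avoids any limit-exchange, and the $p=\infty$ truncation argument gives the pointwise bound $|v|\leq\norm[\Liom]{u}$ without any reference to the family of $L^p$-estimates. Both strategies are sound; yours trades the unified ``let $p\to 1,\infty$'' step for two short ad hoc but elementary arguments.
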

\begin{proof}
 As in \cite[Lemma 2.1]{winkler_14_ctexceed}, for $p\in(1,\infty)$, testing the equation by $v(v^2+\eta)^{\frac p2-1}$ as $0<\eta\to 0$ yields this estimate, which can then be extended to $p\in\set{1,\infty}$ by limiting procedures.
\end{proof}

If we rather prefer estimates involving the $L^1$-norm of $u$, that is possible as well:
\begin{lemma}
 \label{thm:vlppeleule}
 For all $p>0$, $\eta>0$ there exists $C(\eta,p)>0$ such that whenever $u\in\Combar$ is nonnegative, the solution $v$ of \eqref{eq:v}
satisfies 
\[
 \intom v^{p+1}\leq \eta \intom u^{p+1}+C(\eta,p)\left(\intom u\right)^{p+1}.
\]
\end{lemma}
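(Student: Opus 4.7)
The plan is to combine elliptic $L^q$-regularity for \eqref{eq:v} with a compactness/Ehrling-type interpolation inequality, using Lemma \ref{thm:vlpleu} in its $p=1$ version to convert an $L^1$-norm of $v$ into an $L^1$-norm of $u$.

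First I would invoke standard $L^r$-estimates for the Neumann problem \eqref{eq:v}: since $u\in\Combar\subset L^{p+1}(\Om)$ and $\Om$ has smooth boundary, there is $C_1=C_1(p,\Om)>0$ with
\[
\norm[W^{2,p+1}(\Om)]{v}\leq C_1\norm[L^{p+1}(\Om)]{u}.
\]
Next, because $\Om$ is bounded with smooth boundary, Rellich--Kondrachov yields the compact embedding $W^{2,p+1}(\Om)\embeddedinto\embeddedinto L^{p+1}(\Om)$, while $L^{p+1}(\Om)\embeddedinto L^1(\Om)$ continuously. Ehrling's lemma therefore provides, for each $\delta>0$, a constant $C_2(\delta,p)>0$ such that
\[
\norm[L^{p+1}(\Om)]{v}\leq \delta\,\norm[W^{2,p+1}(\Om)]{v}+C_2(\delta,p)\norm[L^1(\Om)]{v}.
\]

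Combining these two and using Lemma \ref{thm:vlpleu} with $p=1$ (so that $\norm[L^1(\Om)]{v}\leq\norm[L^1(\Om)]{u}=\intom u$, since $u\geq 0$) I obtain
\[
\norm[L^{p+1}(\Om)]{v}\leq \delta C_1\norm[L^{p+1}(\Om)]{u}+C_2(\delta,p)\intom u.
\]
Raising to the power $p+1$ and applying the elementary inequality $(a+b)^{p+1}\leq 2^p(a^{p+1}+b^{p+1})$ gives
\[
\intom v^{p+1}\leq 2^p(\delta C_1)^{p+1}\intom u^{p+1}+2^p C_2(\delta,p)^{p+1}\left(\intom u\right)^{p+1},
\]
so choosing $\delta>0$ small enough that $2^p(\delta C_1)^{p+1}\leq \eta$ and setting $C(\eta,p):=2^p C_2(\delta,p)^{p+1}$ closes the argument.

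There is no real obstacle here: all three ingredients (elliptic $W^{2,q}$-regularity for the Neumann problem, Rellich compactness, and Ehrling's lemma) are classical, and the only thing to be careful about is that the exponent $p+1$ lies in $(1,\infty)$ - which follows from the hypothesis $p>0$ - so that the regularity estimate applies. Everything else is routine.
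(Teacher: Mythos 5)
Your proof is correct, but it takes a genuinely different route from the paper's. The paper (following \cite[Lemma 2.2]{winkler_14_ctexceed}) tests \eqref{eq:v} by $v^p$, absorbs $\intom uv^p$ by Young's inequality, and deduces $\frac{4p}{p+1}\intom|\na(v^{(p+1)/2})|^2\leq\intom u^{p+1}$; it then applies Ehrling's lemma to $\psi=v^{(p+1)/2}$ along the chain $W^{1,2}(\Om)\embeddedinto\embeddedinto L^2(\Om)\embeddedinto L^{2/(p+1)}(\Om)$, so the interpolation happens between the energy norm of $v^{(p+1)/2}$ and its $L^{2/(p+1)}$-norm, with Lemma \ref{thm:vlpleu} ($p=1$) finishing the job. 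You instead invoke the full $W^{2,p+1}$ elliptic regularity of Lemma \ref{thm:elliptregularity}, then interpolate at the level of $v$ itself along $W^{2,p+1}(\Om)\embeddedinto\embeddedinto L^{p+1}(\Om)\embeddedinto L^1(\Om)$, again closing with Lemma \ref{thm:vlpleu}. Your version relies on heavier machinery (Calder\'on--Zygmund-type regularity rather than a bare energy estimate), but it buys a cleaner setting: all three spaces in your Ehrling chain are Banach spaces, whereas the paper's $L^{2/(p+1)}(\Om)$ is only a quasi-normed space once $p>1$, so the paper's application of Ehrling implicitly requires a small extra remark (e.g.\ that the usual contradiction-by-compactness proof of the lemma carries over, or working with the $(p+1)$-th powers of the norms from the start). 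Both arguments are sound; yours is arguably the more robust of the two as written.
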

\begin{proof}(as in \cite[Lemma 2.2]{winkler_14_ctexceed})
 Multiply \eqref{eq:v} by $v^p$, integrate over $\Om$ and use Young's inequality:
\[
 p\intom v^{p-1}|\na v|^2+\intom v^{p+1}=\intom uv^p\leq \frac1{p+1}\intom u^{p+1}+\frac p{p+1}\intom v^{p+1},
\]
therefore 
\[
 p\intom v^{p-1}|\na v|^2\leq \frac1{p+1}\intom u^{p+1}.
\]
Because $\na(v^\frac{p+1}2)=\frac{p+1}2 v^\frac{p-1}2\na v$ and $v^{p-1}|\na v|^2=\frac 4{(p+1)^2}(\na (v^\frac{p+1}2))^2$ we obtain 
\[
 \frac{4p}{p+1}\intom|\na (v^\frac{p+1}2)|^2\leq \intom u^{p+1}.
\]

As now 
\[
 \Wezom\kpteingebettetin\Lzom\eingebettetin L^{\frac2{p+1}}(\Om),
\]
by Ehrling's lemma there is $\ctilde_1=\ctilde_1(\eta,p)>0$ (and hence $c_1>0$) such that for all $\psi\in\Wezom$
\begin{align*}
 \norm[L^2]{\psi}^2&\leq \frac {4p}{p+1}\eta\norm[\Wezom]{\psi}^2+\ctilde_1\norm[L^{\frac2{p+1}}(\Om)]{\psi}^2
\leq\frac{4p}{p+1} \eta\norm[\Lzom]{\na \psi}^2+ c_1\norm[L^\frac{2}{p+1}(\Om)]{\psi}^2.
\end{align*}
Applying these two inequalities to $v^\frac{p+1}2$ and using Lemma \ref{thm:vlpleu} for $p=1$, we arrive at
\begin{align*}
 \intom v^{p+1}&\leq \frac{4p}{p+1}\eta\intom |\na(v^\frac{p+1}2)|^2+c_1\left(\intom v\right)^{p+1}
\leq \eta\intom u^{p+1}+c_1\left(\intom u\right)^{p+1}. \qedhere
\end{align*}
\end{proof}

We also recall useful facts on maximal regularity for elliptic PDEs:
\begin{lemma}
\label{thm:elliptregularity}
 For $q\geq 1, \alpha>0$, there is a constant $C>0$ such that any (classical) solution $v$ of 
\[
 \Lap v-v=f\quad \text{in }\Om,\qquad\qquad \delny v\amrand =0 
\]
satisfies $\norm[\Wzqom]{v}\leq C \norm[\Lqom]{f}$ %Friedman, Thm 19.1, bzw. Problem (3), p. 76
and $\norm[C^{2+\alpha}(\Om)]{v}\leq C \norm[C^\alpha(\Om)]{f}$.
\end{lemma}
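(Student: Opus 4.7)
The plan is to invoke standard elliptic regularity theory for the Neumann problem $-\Lap w+w=g$ on the smooth bounded domain $\Om$. Since $-\Lap+1$ is uniformly elliptic and the homogeneous Neumann condition satisfies the complementing (Lopatinskii--Shapiro) condition, both the Agmon--Douglis--Nirenberg $L^q$-theory and classical Schauder theory for oblique-derivative problems apply directly. Each of the two estimates then follows in two steps: first one obtains the standard a priori inequality with a lower-order norm of $v$ on the right, and then this lower-order norm is absorbed via a separate bound of $v$ in terms of $f$.

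For the Sobolev estimate (valid for any $1<q<\infty$; the borderline case $q=1$ is marginal and is never actually used in the later analysis, where $q>n$), I would quote the Agmon--Douglis--Nirenberg bound for the Neumann Laplacian on smooth domains,
\[
 \norm[\Wzqom]{v}\leq C\bigl(\norm[\Lqom]{\Lap v}+\norm[\Lqom]{v}\bigr)\leq C'\bigl(\norm[\Lqom]{f}+\norm[\Lqom]{v}\bigr),
\]
where $\Lap v=v+f$ has been used. To absorb $\norm[\Lqom]{v}$, one proceeds exactly as in the proof of Lemma \ref{thm:vlpleu}: testing $-\Lap v+v=-f$ against a smooth regularization of $v|v|^{q-2}$ and passing to the limit yields
\[
 \norm[\Lqom]{v}\leq \norm[\Lqom]{f},
\]
the regularization only serving to produce the two nonnegative terms $(q-1)\intom |v|^{q-2}|\na v|^2$ and $\intom |v|^q$ on the left, so that no sign restriction on $v$ or $f$ is needed.

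For the Schauder estimate the analogous abstract a priori bound
\[
 \norm[C^{2+\al}(\Om)]{v}\leq C\bigl(\norm[C^\al(\Om)]{f}+\norm[C^0(\Om)]{v}\bigr)
\]
is the classical Schauder inequality for the Neumann problem on a domain with sufficiently smooth boundary. The remaining $\norm[C^0(\Om)]{v}$ is then controlled by $\norm[C^0(\Om)]{f}\leq\norm[C^\al(\Om)]{f}$, either via Lemma \ref{thm:vlpleu} with $p=\infty$ (applied to $\pm v$ with data $\mp f$) or by a direct comparison with the constant supersolution $\norm[C^0(\Om)]{f}$.

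The main \textit{obstacle} here is purely bookkeeping: identifying the correct standard references and carefully handling the lower-order terms in the absorption step. There is no substantial new mathematical content, as the lemma is a routine consequence of the well-established $L^q$- and Schauder-regularity theory for the Neumann Laplacian on smooth bounded domains, and in the subsequent sections it is invoked merely as a convenient black box.
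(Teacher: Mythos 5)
Your proposal is correct and amounts to the same approach as the paper, whose proof consists entirely of two citations (Friedman, ch.\ 19; Gilbarg--Trudinger, Thm.\ 6.30) — you have simply spelled out what those references deliver, namely the Agmon--Douglis--Nirenberg and Schauder a priori bounds followed by absorption of the lower-order norm via the $\norm[\Lqom]{v}\leq\norm[\Lqom]{f}$ estimate. Your side remark that the $L^q$-bound genuinely requires $1<q<\infty$ (the lemma states $q\geq 1$, but only $q>n$ is ever used) is a fair and correct caveat that the paper leaves implicit.
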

\begin{proof}
 \cite[ch. 19]{friedman_08}
 \cite[Thm. 6.30]{gilbarg_trudinger_01}
\end{proof}

\begin{lemma}
\label{thm:nonneg}
 Let $v$ solve \eqref{eq:v} for a nonnegative right-hand side $u$. Then also $v$ is nonnegative.
\end{lemma}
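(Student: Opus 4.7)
The statement is a standard weak maximum principle for the Helmholtz-type equation $-\Delta v + v = u$ under Neumann boundary conditions, so the plan is to carry out a short energy-type argument using the negative part of $v$ as a test function. Set $v_- := \max\{-v, 0\}$. Since $v$ is a classical (hence $W^{1,2}$) solution, $v_- \in \Wezom$, and one can multiply \eqref{eq:v} by $v_-$ and integrate over $\Om$.

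The next step is to handle each resulting term. Integration by parts gives
\[
 -\intom (\Lap v)\, v_- = \intom \na v \cdot \na v_- - \int_{\dom} (\delny v)\, v_-,
\]
where the boundary integral vanishes thanks to $\delny v\amrand = 0$. On $\{v < 0\}$ we have $v_- = -v$ and $\na v_- = -\na v$, while on $\{v \ge 0\}$ both $v_-$ and $\na v_-$ vanish; hence $\na v \cdot \na v_- = -|\na v_-|^2$ and $v v_- = -v_-^2$ pointwise a.e. Plugging these identities into the tested equation produces
\[
 -\intom |\na v_-|^2 - \intom v_-^2 = \intom u\, v_-.
\]

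Now the conclusion is immediate from sign considerations: the left-hand side is non-positive, while the right-hand side is non-negative since $u \ge 0$ and $v_- \ge 0$. Therefore both sides equal zero, which forces $v_- \equiv 0$, i.e.\ $v \ge 0$ in $\Om$. There is no serious obstacle here; the only care needed is the justification that $v_-$ is an admissible test function (guaranteed by $v \in \Czombar$ coming from Lemma \ref{thm:elliptregularity}) and the bookkeeping of signs in the chain rule for the positive-part function. One could alternatively invoke the strong maximum principle for $-\Lap + \mathrm{id}$ with Neumann data together with Hopf's lemma (a negative interior minimum is excluded by $-\Lap v(x_0) + v(x_0) \le v(x_0) < 0 \le u(x_0)$, and a negative boundary minimum is excluded by $\delny v\amrand = 0$ combined with Hopf), but the energy argument above is the most concise route.
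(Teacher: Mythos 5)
Your energy argument is correct and complete: testing $-\Lap v + v = u$ with $v_- = \max\{-v,0\}$, integrating by parts (the boundary term drops by the Neumann condition), and using $\na v\cdot\na v_- = -|\na v_-|^2$ and $v v_- = -v_-^2$ a.e.\ yields $-\intom|\na v_-|^2 - \intom v_-^2 = \intom u\,v_- \ge 0$, forcing $v_-\equiv 0$. The paper itself simply invokes ``the elliptic maximum principle'' without detail, so your proof takes a genuinely different (and more self-contained) route: a variational/weak-maximum argument via the negative-part test function rather than the pointwise maximum principle. The energy method avoids having to separately treat an interior minimum and a boundary minimum (where one would need Hopf's lemma to exploit $\delny v\amrand = 0$), at the cost of requiring $v_-\in W^{1,2}$ as a test function, which you correctly justify from the regularity of $v$. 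You also note the maximum-principle alternative, so you have essentially subsumed the paper's route. No gaps.
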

\begin{proof}
 This is a consequence of the elliptic maximum principle.
\end{proof}

\section{Parabolic-elliptic case}
\label{sec:parabolic_elliptic}
\subsection{Existence}
We prepare the following two lemmata with this estimate from \cite[Lemma 1.3 iv)]{winkler_10_aggregationvs} about the (Neumann) heat semigroup:
\begin{lemma}
\label{thm:heatestimate}
 Let $1<q\leq p\leq \infty$. Then there exists $C>0$ such that for all $t>0$ and for all $w\in(\Lqom)^n$
\[
 \norm[\Lpom]{\heatgr t\na\cdot w}\leq C(1+t^{-\frac12-\frac n2(\frac1q-\frac1p)})\norm[\Lqom]{w}.
\]
\end{lemma}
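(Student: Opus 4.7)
My plan is to reduce the claim to standard kernel estimates for the Neumann heat semigroup on $\Om$. Writing $\heatgr t$ as integration against the Neumann heat kernel $G(\cdot,\cdot,t)$ and shifting the divergence onto $G$ (the boundary term vanishes thanks to the Neumann condition on $G$), I would express
\[
(\heatgr t\na\cdot w)(x)=-\intom \na_y G(x,y,t)\cdot w(y)\,dy,
\]
so that the assertion becomes a mapping property from $\Lqom$ to $\Lpom$ of the integral operator with kernel $|\na_y G(x,y,t)|$.

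The principal analytic input is the Gaussian pointwise bound
\[
|\na_y G(x,y,t)|\leq C\,t^{-\frac{n+1}{2}}\,e^{-c|x-y|^2/t},\qquad t\in(0,1],
\]
classical for smooth bounded domains under Neumann boundary conditions. Since the rescaled Gaussian $t^{-n/2}e^{-c|z|^2/t}$ has $L^1(\R^n)$-norm bounded uniformly in $t$, Young's convolution inequality (applied after extending $w$ by zero to $\R^n$) delivers
\[
\norm[\Lpom]{\heatgr t\na\cdot w}\leq C\,t^{-\frac12-\frac n2(\frac1q-\frac1p)}\norm[\Lqom]w\qquad\text{for }t\in(0,1].
\]
For $t\geq 1$ the Neumann semigroup is a bounded operator on every $L^q(\Om)$ (in fact it contracts exponentially onto spatial averages), so the operator norm of $\heatgr t\na\cdot$ stays uniformly bounded as $t\to\infty$. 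Splicing the two regimes at $t=1$ gives the claimed inequality; the summand $1$ in the factor $1+t^{-1/2-\frac n2(1/q-1/p)}$ comes from exactly this large-time part.

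The hard step is the boundary regularity: proving the Gaussian gradient bound uniformly up to $\dom$ requires either a localisation and reflection argument combined with interior parabolic Schauder theory, or a resolvent-based approach through the analytic semigroup generated by the Neumann Laplacian. As all of this has been carefully carried out in precisely the form needed here, I would simply invoke \cite[Lemma 1.3 iv)]{winkler_10_aggregationvs} instead of reproducing the kernel analysis.
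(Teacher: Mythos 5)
Your proposal ends exactly where the paper's proof begins: by citing \cite[Lemma 1.3 iv)]{winkler_10_aggregationvs}. So the approaches are essentially identical, and the preceding sketch of the kernel-estimate argument is helpful context but ultimately not the proof you are offering.

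There is one small point the paper addresses that you do not. The present Lemma allows $q=\infty$ (in which case also $p=\infty$), whereas Lemma 1.3 iv) in the cited article is stated only for $q<\infty$. The paper's proof explicitly notes this discrepancy and observes that the argument in the reference extends to $q=\infty$ (using density of $C^\infty(\Om)$ in $L^1(\Om)$). Your proposal cites the reference as if it literally covered the stated range, which leaves a gap at $q=p=\infty$. Interestingly, your own sketch via the Gaussian gradient bound and Young's inequality \emph{does} handle that endpoint without modification, so if you intended the sketch as the actual argument the gap would disappear -- but as written you defer to the reference, and then the endpoint case should be acknowledged and dealt with.
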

\begin{proof}
 \cite[Lemma 1.3 iv)]{winkler_10_aggregationvs}. Although the lemma in that article is stated only for $q<\infty$, the proof actually already covers the case $q=\infty$, because $\Cinf(\Om)$ is dense in $L^1(\Om)$.
\end{proof}

\label{sec:epsexistence}
\label{sec:epslocalexistence}
One of the first steps in dealing with solutions of \eqref{eq:epsprob} is to show that they exist, at least locally. Let us briefly give the corresponding fixed point arguments.

\begin{lemma}
\label{thm:epslocalexistence}
 Let $u_0\in\Combar,\eps>0,\kappa\geq 0,\my>0$. Then there is $\Tmax\in(0,\infty]$ such that \eqref{eq:epsprob} has a unique classical solution on 
 $\Ombar\times(0,\Tmax)$ and 
\[
 \limsuptTmax \norm[\Liom]{u(\cdot,t)}=\infty \text{ or } \Tmax=\infty.
\]
\end{lemma}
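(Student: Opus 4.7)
The plan is to set up a Banach fixed point argument for the mild formulation of the $u$-equation obtained by eliminating $v$ via the elliptic problem \eqref{eq:v}. For $u_0\in\Combar$ fix $R>\norm[\Liom]{u_0}+1$ and, for $T>0$ to be chosen, work in the closed set
\[
 X_T:=\setl{u\in C([0,T];\Combar)\col u(0)=u_0,\ \norm[\LiQT]{u}\leq R}
\]
equipped with the sup-norm. For each $u\in X_T$, Lemma \ref{thm:elliptregularity} furnishes a unique solution $v=v[u]$ of \eqref{eq:v} with $\norm[\Wzqom]{v[u]}\leq C\norm[\Lqom]u$; choosing $q>n$ gives $\Wzqom\embeddedinto C^1(\Ombar)$ and hence $\norm[\Liom]{\na v[u]}\leq c_1 R$. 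Define
\[
 \Phi(u)(t):=\heatgr{\eps t}u_0-\intnt \heatgr{\eps(t-s)}\na\cdot(u\na v[u])(s)\,ds+\intnt \heatgr{\eps(t-s)}(\kappa u-\my u^2)(s)\,ds,
\]
so that a fixed point of $\Phi$ in $X_T$ is a mild solution of \eqref{eq:epsprob}.

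First I would show $\Phi(X_T)\subset X_T$ for $T$ small. The heat group is a contraction on $\Liom$, so $\norm[\Liom]{\heatgr{\eps t}u_0}\leq\norm[\Liom]{u_0}$. The reaction term is estimated pointwise by $(\kappa R+\my R^2)T$. For the chemotactic term, Lemma \ref{thm:heatestimate} with $p=\infty$ and $q>n$ yields
\[
 \norm[\Liom]{\heatgr{\eps(t-s)}\na\cdot(u\na v[u])(s)}\leq C\bigl(1+(\eps(t-s))^{-\frac12-\frac n{2q}}\bigr)\norm[\Lqom]{u\na v[u]}(s),
\]
and since $\frac12+\frac n{2q}<1$ the time integral is finite and of order $T^{1/2-n/(2q)}$ (up to an $\eps$-dependent constant); the $L^q$-factor is bounded by $|\Om|^{1/q}R\cdot c_1 R$. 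Choosing $T=T(\eps,R,\norm[\Liom]{u_0})$ small enough thus forces the right-hand side below $R$. The contraction estimate for $\Phi(u_1)-\Phi(u_2)$ follows along the same lines upon using that $u\mapsto v[u]$ is linear, whence $u_1\na v[u_1]-u_2\na v[u_2]=(u_1-u_2)\na v[u_1]+u_2\na v[u_1-u_2]$, which is controlled pointwise by $c_2 R\norm[\LiQT]{u_1-u_2}$. Banach's theorem gives a unique mild solution $u\in X_T$.

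Once $u\in C([0,T];\Combar)$ is obtained, a standard bootstrap upgrades it to a classical solution: Lemma \ref{thm:elliptregularity} gives $v(\cdot,t)\in C^{2+\alpha}(\Ombar)$ as soon as $u(\cdot,t)\in C^\alpha(\Ombar)$, so Hölder regularity of $u$ (gained from the mild representation and standard parabolic Hölder estimates applied to the equation $u_t=\eps\Lap u +f$ with $f:=-\na u\cdot\na v-u\Lap v+\kappa u-\my u^2$) propagates to $v$, and then parabolic Schauder theory yields $u\in C^{2+\alpha,1+\alpha/2}(\Ombar\times(0,T])$. Uniqueness within $X_T$ lifts to uniqueness of classical solutions by comparing with the mild formulation. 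To obtain the extensibility alternative, let $\Tmax$ be the supremum of all $T$ for which such a classical solution exists on $[0,T]$. If $\limsuptTmax\norm[\Liom]{u(\cdot,t)}<\infty$, one can pick $t_0<\Tmax$ with $\Tmax-t_0$ smaller than the existence time the fixed-point argument produces starting from the bounded initial datum $u(\cdot,t_0)\in\Combar$, contradicting maximality; hence either $\Tmax=\infty$ or $\limsuptTmax\norm[\Liom]{u(\cdot,t)}=\infty$.

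The main obstacle is the singular factor $(t-s)^{-1/2-n/(2q)}$ in the heat semigroup estimate for $\na\cdot(u\na v)$; the saving point is that with $q>n$ this exponent lies strictly below $1$, so the convolution in time is absolutely convergent and short-time smallness can be achieved. Everything else is routine.
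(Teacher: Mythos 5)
Your argument is correct and follows essentially the same route as the paper: a Banach fixed-point argument for the mild formulation on a small time interval, using Lemma \ref{thm:heatestimate} with the exponent $\tfrac12+\tfrac n{2q}<1$ (since $q>n$) to control the chemotactic Duhamel term, then bootstrapping to classical regularity and extending to a maximal interval with the standard blow-up alternative. The only cosmetic difference is that you bound $\na v[u]$ in $L^\infty$ via $W^{2,q}\hookrightarrow C^1$, whereas the paper keeps $\na v$ in $L^q$ and pairs it with the $L^\infty$-bound on $u$; both yield the same $L^q$-control on $u\na v[u]$ needed for Lemma \ref{thm:heatestimate}.
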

\begin{proof}
 For $u\in\Combar$ denote by $v_u$ the solution of 
 \[
  0=\Lap v+u-v\quad \text{in }\Om,\qquad\qquad \delny v\amrand=0.
 \]
 Let $R>2\norm[\Weqom]{u_0}$ be given. Fix constants $C_1$ as in Lemma \ref{thm:heatestimate}, $C_2$ and the function $C\colon [0,1]\to \R$ such that 
\[
 \norm[\Lqom]{\na v}\leq C_2\norm[\Liom]{u}, \qquad C(t)=C_1\intnt 1+(\eps(t-s))^{-\frac12-\frac n{2q}}ds, t>0, 
\]
and note that $C$ is monotone and continuous with $C(0)=0$. Choose $\Ttilde\in(0,1)$ such that 
\[
 (\kappa+2\my R)\Ttilde +2R C_2 C(\Ttilde)<\frac12
\]
and let $T\in(0,\Ttilde)$.
For $t\in[0,T]$ define 
\[
 \Phi(u)(\cdot,t):=\heatgr{\eps t} u_0-\intnt \heatgr{\eps(t-s)} \na\cdot(u(s)\na v_u(s))ds+\kappa\intnt\heatgr{\eps(t-s)}u(s)ds-\my\intnt \heatgr{\eps(t-s)}u^2(s) ds.
\]
Then $\Phi\col C(\Qbar_T)\to C(\Qbar_T)$ 
 is well-defined and, in fact, even $\Phi(u)\in \Cinf(Q_T)$. 
In addition, $\Phi$ is a contraction in $M:=\setl{f\in C(\Qbar_T); \norm[\LiQT]f\leq R}$, as can be seen as follows:
\begin{align*}
  \norm[\LiQT]{\Phi(u)-\Phi(\utilde)}&\leq 
\sup_{0<t<T}\intnt\norm[\Liom]{\heatgr{\eps(t-s)} \na(u(s)\cdot\na v_u(s)-\utilde(s)\na v_{\utilde}(s))}ds\\
&\qquad+\sup_{0<t<T} \intnt\norm[\Liom]{\heatgr{\eps(t-s)}(\kappa(u(s)-\utilde(s))+\my(u^2(s)-\utilde^2(s)))}ds\\%+\my\intnt\norm[\Liom] &\leq \sup_{0<t<T} C_1\intnt(1+(\eps(t-s))^{-\frac12-\frac n{2q}})\norm[\Lqom]{(u(s)-\utilde(s))\na v_u(s)+\utilde(s)\na v_{u-\utilde}(s)}ds+\\
&\qquad\qquad\qquad\qquad\qquad\qquad\qquad\qquad\qquad\qquad\qquad+T(\kappa+2R\my)\norm[\LiQT]{u-\utilde}\\
& \leq C(T)(\norm[\LiQT]{u-\utilde}C_2R+RC_2\norm[\LiQT]{u-\utilde})+T(\kappa+2 \my R)\norm[\LiQT]{u-\utilde}\\
&\leq \frac12\norm[\LiQT]{u-\utilde}.
\end{align*}
Furthermore $\Phi$ maps $M$ to $M$ as well:
\begin{align*}
 \norm[\LiQT]{\Phi(u)}&\leq\norm[\LiQT]{\Phi(u)-\Phi(0)}+\norm[\LiQT]{\Phi(0)}\leq \frac12\norm[\LiQT]{u}+\norm[\Liom]{u_0}\leq R.
\end{align*}
With the aid of Banach's fixed point theorem, this procedure yields a solution on $(0,T)$. 
Successively employing the same reasoning on later time intervals (then with different $u_0$ and possibly larger $R$) the existence of a solution on a maximal time interval $(0,\Tmax)$ is obtained where either $\Tmax=\infty$ or $\limsuptTmax \norm[\Liom]{u(\cdot,t)}=\infty$.
\end{proof}

\subsection{$L^p$-bounds and global existence}

Bounds on $L^p$-norms are of great utility, not only for the deduction of global existence. 
A standard testing procedure (see also \cite{winkler_14_ctexceed}) yields 
\begin{lemma}
 \label{thm:ddtulp}
 Let $\kappa\geq 0$, $\my>0$, $u_0\in\Combar$ nonnegative. Let $(u,v)$ solve $\eqref{eq:epsprob}$ classically in $\Om\times(0,T)$ for some $T>0, \eps>0$. 
 Then for $p\geq 1$ and on the whole time interval $(0,T)$, we have
 \begin{equation}
  \label{eq:dreieins}
  \ddt \intom u^p+p(p-1)\eps\intom u^{p-2}|\na u|^2\leq p\kappa\intom u^p -(1-p+\my p)\intom u^{p+1}.
 \end{equation}
\end{lemma}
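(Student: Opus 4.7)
My plan is the standard $L^p$-testing procedure: multiply the first equation in \eqref{eq:epsprob} by $u^{p-1}$ and integrate over $\Om$. For $p\geq 2$ this is licit since $u$ is classical, while for $1\leq p<2$ one should first test with $(u+\eta)^{p-1}$ and send $\eta\downto 0$ (the resulting $u^{p-2}|\na u|^2$-term is nonnegative, so Fatou handles the limit); for $p=1$ the $(p-1)$-prefactor kills the diffusion contribution and only mass balance remains.

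After taking the time derivative out, the left-hand side contributes $\tfrac1p\ddt\intom u^p$. The diffusion term, integrated by parts with $\delny u\amrand=0$, gives $-\eps(p-1)\intom u^{p-2}|\na u|^2$. The chemotaxis term requires two integrations by parts: first,
\[
 -\intom u^{p-1}\na\cdot(u\na v)=\frac{p-1}{p}\intom \na(u^p)\cdot\na v,
\]
using $\delny v\amrand=0$ to discard the boundary term, and then integrating by parts once more yields $-\tfrac{p-1}{p}\intom u^p\Lap v$. Here I would substitute $\Lap v=v-u$ from the elliptic equation to obtain $-\tfrac{p-1}{p}\intom u^p v+\tfrac{p-1}{p}\intom u^{p+1}$. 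The reaction terms contribute $\kappa\intom u^p-\my\intom u^{p+1}$ directly.

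Combining and multiplying by $p$ gives
\[
 \ddt\intom u^p+p(p-1)\eps\intom u^{p-2}|\na u|^2 = -(p-1)\intom u^p v+(p-1-\my p)\intom u^{p+1}+p\kappa\intom u^p,
\]
and since $u\geq 0$ (by maximum principle / invariance of nonnegativity, which holds for classical solutions) and $v\geq 0$ by Lemma \ref{thm:nonneg}, the term $-(p-1)\intom u^p v$ is nonpositive for $p\geq 1$ and may be discarded, yielding precisely \eqref{eq:dreieins}. The only mild obstacle is the low-regularity of $u^{p-1}$ near zeros of $u$ when $1\leq p<2$, which is handled by the regularization described above; every other step is textbook integration by parts combined with substitution from the elliptic equation.
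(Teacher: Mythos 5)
Your proof follows the paper's own argument step for step: test with $u^{p-1}$, integrate by parts twice in the chemotaxis term using the Neumann conditions, substitute $\Lap v = v-u$ from the elliptic equation, and discard the nonpositive term $-(p-1)\intom u^p v$ via nonnegativity of $v$ (Lemma \ref{thm:nonneg}). The extra regularization remark for $1\le p<2$ is a sensible (if unstated in the paper) precaution but does not alter the substance; the approach and conclusion are identical.
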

\begin{proof}
 Multiplication of the first equation of \eqref{eq:epsprob} by $u^{p-1}$ and integration by parts yield
 \[
  \frac1p\ddt\intom u^p+(p-1)\eps\intom u^{p-2}|\na u|^2=(p-1)\intom u^{p-1}\na u\na v+\kappa \intom u^p-\my\intom u^{p+1}.
 \]
 Another integration by parts in combination with the second equation of \eqref{eq:epsprob} and Lemma \ref{thm:nonneg} show
 \[
  (p-1)\intom u^{p-1}\na u \na v =-\frac{p-1}p\intom u^p \Lap v=\frac{p-1}p\intom u^p(u-v)\leq \frac{p-1}p\intom u^{p+1},
 \]
 which gives formula \eqref{eq:dreieins}.
\end{proof}
This estimate directly leads to the following bound on $L^p$-norms of $u$.
\begin{corollary}
\label{thm:ulpbounded}
  Let $\kappa\geq 0, \my>0, u_0\in \Combar$ nonnegative, suppose $(u,v)$ is classical solution of \eqref{eq:epsprob} in $Q_T$ for some $T>0,\eps>0$. Let $p\in[1,\frac{1}{(1-\my)_+})$.\\
 Then for all $t\in[0,T)$,  \[\intom u^p(\cdot,t)\leq \max\setl{\intom u_0^p, \left(\frac{p\kappa}{1-(1-\my)p}\right)^p|\Om|}.\]
\end{corollary}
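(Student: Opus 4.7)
The plan is to turn the differential inequality \eqref{eq:dreieins} into a scalar ODI for $y(t):=\intom u^p(\cdot,t)$ and then argue by elementary comparison. First, I would observe that the assumption $p<\tfrac{1}{(1-\my)_+}$ means precisely that $1-p+\my p = 1-(1-\my)p$ is strictly positive (with the convention that in the case $\my\geq 1$ this is automatic for every $p\geq 1$). So both terms on the right of \eqref{eq:dreieins} have a fixed sign, the chemotaxis-induced $u^{p+1}$ term being consumed by the logistic absorption.

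Next, since the $\eps$-diffusion term on the left of \eqref{eq:dreieins} is nonnegative, dropping it yields
\begin{equation*}
  \ddt \intom u^p \;\leq\; p\kappa\intom u^p - (1-(1-\my)p)\intom u^{p+1}.
\end{equation*}
To close this as an ODI in $y$ alone, I would bound $\intom u^{p+1}$ below in terms of $\intom u^p$ by Hölder's inequality:
\begin{equation*}
 \intom u^p \;\leq\; |\Om|^{\frac1{p+1}}\Bigl(\intom u^{p+1}\Bigr)^{\frac p{p+1}},
\end{equation*}
which rearranges to $\intom u^{p+1}\geq |\Om|^{-1/p}y^{(p+1)/p}$. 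Writing $a:=p\kappa$ and $b:=(1-(1-\my)p)|\Om|^{-1/p}>0$, we end up with
\begin{equation*}
 y'(t) \;\leq\; a\,y(t) - b\,y(t)^{(p+1)/p} \qquad\text{for }t\in(0,T).
\end{equation*}

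Finally, I would apply the standard ODE comparison argument: the right-hand side is nonpositive as soon as $y\geq (a/b)^p$, so $y$ cannot exceed $\max\{y(0),(a/b)^p\}$ on $[0,T)$. Computing
\begin{equation*}
 (a/b)^p = \Bigl(\tfrac{p\kappa}{1-(1-\my)p}\Bigr)^{p}|\Om|,
\end{equation*}
yields exactly the asserted bound. I do not anticipate a serious obstacle here; the only thing that needs care is the degenerate case $\kappa=0$, where the bound simply reduces to $\intom u^p\leq \intom u_0^p$ and follows directly from $y'\leq -by^{(p+1)/p}\leq 0$.
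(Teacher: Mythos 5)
Your proof is correct and follows essentially the same route as the paper: drop the nonnegative gradient term in \eqref{eq:dreieins}, convert $\intom u^{p+1}$ into a power of $y=\intom u^p$ via Hölder, and close with ODE comparison against the superequilibrium $(a/b)^p$. The arithmetic ($b>0$ under $p<\tfrac{1}{(1-\my)_+}$, and $(a/b)^p=\bigl(\tfrac{p\kappa}{1-(1-\my)p}\bigr)^p|\Om|$) all checks out.
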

\begin{proof}
 An application of H\"older's inequality gives $\intom u^{p+1}\geq |\Om|^{-\frac1p}(\intom u^p)^{\frac{p+1}p}$ and transforms \eqref{eq:dreieins} into the differential inequality
 \[
  y'(t)\leq p\kappa y(t)-(1-p+\my p)|\Om|^{-\frac1p}(y(t))^{1+\frac1p}), \quad t\in(0,T),
 \]
 for $y=\intom u^p$. An ODE-comparison then yields the result.
\end{proof}

\begin{corollary}
 \label{thm:n2bounded}
 Let $\kappa\geq 0$, $q>n$, $u_0\in\Combar$. If $\my>\frac{n-2}{n}$, the solutions of \eqref{eq:epsprob} are global.
\end{corollary}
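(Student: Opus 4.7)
The plan is to reduce the global-existence claim to the extensibility criterion in Lemma \ref{thm:epslocalexistence}: it suffices to show that $\|u(\cdot,t)\|_{\Liom}$ cannot blow up in finite time.

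The first step is to note that the hypothesis $\my>\frac{n-2}{n}$ is exactly equivalent to $\frac{1}{(1-\my)_+}>\frac{n}{2}$, with the convention that the right-hand side is $+\infty$ when $\my\geq 1$. One may therefore fix some $p$ with $\frac{n}{2}<p<\frac{1}{(1-\my)_+}$, and Corollary \ref{thm:ulpbounded} then supplies a $t$-independent bound
\[
 \sup_{t\in[0,\Tmax)}\norm[\Lpom]{u(\cdot,t)}\leq C.
\]

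The second step is to upgrade this $\Lpom$-bound to an $\Liom$-bound. By Lemma \ref{thm:elliptregularity}, $v(\cdot,t)$ lies in $\Wzqom$ with $q=p$ uniformly in $t$; since $p>\frac{n}{2}$, standard Sobolev embedding yields $\na v(\cdot,t)\in L^r(\Om)$ with $r=\frac{np}{n-p}>n$ (interpreted as $r=\infty$ when $p\geq n$), again uniformly in $t$. Then, writing $u$ via the Duhamel representation driven by the Neumann heat semigroup,
\[
 u(t)=\heatgr{\eps t}u_0-\intnt \heatgr{\eps(t-s)}\na\cdot(u\na v)(s)\,ds+\intnt\heatgr{\eps(t-s)}(\kappa u-\my u^2)(s)\,ds,
\]
one estimates the convective term through Lemma \ref{thm:heatestimate} at an exponent $\qtilde$ chosen so that $\frac{1}{\qtilde}=\frac{1}{p_k}+\frac{1}{r}$ for the current integrability level $p_k$. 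Successive application (with the negative term $-\my u^2$ dropped and the linear term $\kappa u$ handled by standard heat-semigroup $L^{p_k}\to L^{p_{k+1}}$ estimates) raises the integrability exponent through a finite sequence $p_0<p_1<\dots$ until $p_K>n$, after which one further application yields
\[
 \sup_{t\in[0,\Tmax)}\norm[\Liom]{u(\cdot,t)}\leq C.
\]
Lemma \ref{thm:epslocalexistence} then forces $\Tmax=\infty$.

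The main obstacle is the bootstrap itself: a single application of Lemma \ref{thm:heatestimate} requires $\qtilde>n$ for the singular factor $(t-s)^{-\frac{1}{2}-\frac{n}{2\qtilde}}$ to be time-integrable, but starting from $p>\frac{n}{2}$ this is not immediately available, forcing the iterative scheme above. An equivalent route, avoiding the Duhamel formula, is to combine the differential inequality of Lemma \ref{thm:ddtulp} with a Gagliardo-Nirenberg estimate applied to $u^{p/2}$ (Moser-Alikakos iteration); both routes are standard, and the outcome is also contained in the boundedness results of \cite{tello_winkler_07,winkler_10_boundedness}, which apply directly to the present system for each fixed $\eps>0$.
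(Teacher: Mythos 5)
Your proof is correct and amounts to the same argument the paper invokes: the paper's proof of Corollary~\ref{thm:n2bounded} simply cites Lemmata 2.3 and 2.4 of \cite{tello_winkler_07}, which carry out precisely the elliptic-regularity/Sobolev-embedding plus Duhamel/heat-semigroup bootstrap from the $L^p$-bound of Corollary~\ref{thm:ulpbounded} (with $\frac{n}{2}<p<\frac{1}{(1-\my)_+}$) up to an $\Liom$-bound, after which the extensibility criterion of Lemma~\ref{thm:epslocalexistence} forces $\Tmax=\infty$. You have spelled out the referenced iteration in detail (and noted the Moser--Alikakos alternative), but there is no substantive difference from the paper's route.
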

\begin{proof}
This arises from the bounds in Corollary \ref{thm:ulpbounded} by arguments that can be found in Lemmata 2.3 and 2.4 of \cite{tello_winkler_07}.
% If $\my>\frac{n-2}{n}$, a choice $p>\frac n2$ is possible in Corollary \ref{thm:ulpbounded}. Lemma \ref{thm:regularityjump} then rules out the possibility of 
%$\limsup_{t\upto T} \norm[\Liom]{u(\cdot,t)}=\infty$ for any finite $T$.
\end{proof}

If even $\my\geq 1$, bounds can be given in a more explicit form and independent of $\eps$.

\begin{lemma}
 \label{thm:epsprobbounds}
 Let $\kappa\geq 0, \my \geq 1, u_0\in\Combar, u_0\geq 0, u_0\neq 0$, $(u_\eps,v_\eps)$ classical solution of \eqref{eq:epsprob} in $\Om\times(0,\infty)$ for $\eps>0$. 
 Then, for all $t>0$,
\[
 \norm[\Liom]{u_\eps(\cdot,t)}\leq\begin{cases}
  \frac{\kappa}{\my-1}(1+(\frac{\kappa}{(\my-1)\norm[\Liom]{u_0}}-1)e^{-\kappa t})\inv&\kappa>0,\my>1,\\
  \frac{\norm[\Liom]{u_0}}{1+(\my-1)\norm[\Liom]{u_0}t}&\kappa=0,\my>1,\\
  \norm[\Liom]{u_0}e^{\kappa t}&\kappa>0, \my=1,\\
  \norm[\Liom]{u_0}&\kappa=0, \my=1.
  \end{cases}
\]
\end{lemma}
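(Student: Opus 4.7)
The plan is to reduce the problem to a scalar ODE by combining the two equations and invoking the parabolic comparison principle, and then to solve that ODE explicitly in each of the four cases.

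First I would rewrite the $u$-equation in non-divergence form using the elliptic equation. Expanding the chemotactic term gives
\[
 u_t = \eps\Lap u - \na v \cdot \na u - u\Lap v + \kappa u - \my u^2,
\]
and substituting $\Lap v = v - u$ from \eqref{eq:v} yields
\[
 u_t = \eps\Lap u - \na v\cdot\na u - uv + \kappa u + (1-\my)u^2.
\]
Since $u_0\ge 0$ keeps $u$ nonnegative on its existence interval and Lemma \ref{thm:nonneg} guarantees $v\ge 0$, the term $-uv$ is nonpositive, so I obtain the pointwise differential inequality
\[
 u_t - \eps\Lap u + \na v\cdot\na u \leq \kappa u - (\my-1)u^2 \quad\text{in }\Om\times(0,\infty),
\]
together with $\delny u\amrand = 0$. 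Crucially, because $\my\ge 1$, the coefficient $-(\my-1)$ is nonpositive.

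Next I would produce a supersolution independent of $x$. Let $y\in C^1([0,\infty))$ solve the scalar Bernoulli/logistic ODE
\[
 y'(t) = \kappa y(t) - (\my-1)y(t)^2, \qquad y(0) = \norm[\Liom]{u_0},
\]
and set $w(x,t):=y(t)$. Then $w$ is spatially constant, satisfies Neumann boundary conditions, and
\[
 w_t - \eps\Lap w + \na v\cdot\na w = y' = \kappa w - (\my-1)w^2.
\]
The difference $z := u - w$ therefore satisfies $z(\cdot,0)\le 0$, homogeneous Neumann data, and the linear differential inequality
\[
 z_t - \eps\Lap z + \na v\cdot\na z \leq \kappa z - (\my-1)(u+w)\, z.
\]
Since the zeroth-order coefficient $\kappa - (\my-1)(u+w)$ is locally bounded on $\Om\times[0,T]$ for every $T>0$ (because $u$ is a classical solution and hence bounded on bounded time intervals, and $w=y$ is continuous), the parabolic maximum principle applies and yields $z \leq 0$, i.e.\ $u(x,t)\leq y(t)$ on $\Om\times(0,\infty)$, and in particular $\norm[\Liom]{u_\eps(\cdot,t)}\leq y(t)$.

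Finally, I would integrate the ODE in the four cases. For $\kappa>0$, $\my>1$, separation of variables on the logistic equation gives exactly
\[
 y(t) = \frac{\kappa}{\my-1}\left(1 + \Big(\tfrac{\kappa}{(\my-1)\norm[\Liom]{u_0}}-1\Big)e^{-\kappa t}\right)^{-1}.
\]
For $\kappa=0$, $\my>1$ the equation $y' = -(\my-1)y^2$ integrates to $y(t) = \frac{\norm[\Liom]{u_0}}{1+(\my-1)\norm[\Liom]{u_0}\,t}$; for $\kappa>0$, $\my=1$ it reduces to $y' = \kappa y$ with solution $\norm[\Liom]{u_0}e^{\kappa t}$; and for $\kappa=0$, $\my=1$ one obtains the constant $y\equiv\norm[\Liom]{u_0}$. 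These are exactly the four bounds claimed, and none of them depends on $\eps$. The only delicate point is verifying applicability of the comparison principle for the linear operator with drift $\na v$ and nonpositive zeroth-order coefficient structure; but the $\eps\Lap$ term makes the operator uniformly parabolic for each fixed $\eps>0$, and the boundedness of $\na v$ on compact time intervals (via Lemma \ref{thm:elliptregularity}) makes this entirely standard.
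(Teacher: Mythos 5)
Your proof is correct and follows exactly the route the paper indicates: the paper's proof is a one-line reference to comparison with the solution $y$ of $y'=\kappa y-(\my-1)y^2$, $y(0)=\norm[\Liom]{u_0}$, and your argument fills in precisely the steps behind that comparison (rewriting via $\Lap v=v-u$, discarding $-uv\le 0$ using Lemma \ref{thm:nonneg}, taking the constant-in-$x$ supersolution $w=y(t)$, and invoking the parabolic comparison principle for the uniformly parabolic operator with bounded drift). The explicit solutions of the scalar ODE in the four cases are also integrated correctly.
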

\begin{proof}
(Cf. \cite[Lemma 4.6]{winkler_14_ctexceed}.) This can be obtained by comparison with the solution $y$ of 
\[
 y'(t)=\kappa y(t)-(\my-1)y^2(t),\;\;t>0,\qquad y(0)=\norm[\Liom]{u_0}.\qedhere
\] 
\end{proof}

\subsection{Radial solutions}
In the following sections we will restrict ourselves to the prototypical radially symmetric situation. 
In this case, equations \eqref{eq:epsprob} can be rewritten in the form 
\begin{align}
\label{eq:radial1}
 u_t&=\eps u_{rr}+\eps\frac{N-1}{r} u_r -u_rv_r-uv_{rr}-\frac{N-1}{r}uv_r +\kappa u -\my u^2\\
\label{eq:radial2}
 0&=v_{rr}+\frac{N-1}{r}v_r-v+u.
\end{align}
 We begin by preparing an inequality for the derivative of $v$. Gained by the radial symmetry, it will be one of the most important tools for the calculations preparing the estimation of $\norm[\Lqom]{\na u}$ in terms of $\norm[\Liom]{u}$.

\begin{lemma}
\label{thm:radialestimate}
 Let $(u,v)$ 
 be a radially symmetric nonnegative classical solution of \eqref{eq:epsprob}. Then for $r\in[0,R], t>0$,
\begin{equation}
 \label{eq:radialtrick}
  v_r(r,t)\leq \frac1Nr\norm[\Liom]{u(\cdot,t)}.
\end{equation}
\end{lemma}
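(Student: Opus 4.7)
The plan is to rewrite the radial elliptic equation in divergence form, integrate on $[0,r]$, and then bound the resulting integral using the nonnegativity of $u$ together with Lemma \ref{thm:vlpleu}.

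First I would multiply \eqref{eq:radial2} by $r^{N-1}$ so that it takes the form
\[
  (r^{N-1} v_r)_r = r^{N-1}(v-u),
\]
which is the key structural observation that uses radial symmetry. Integrating from $0$ to $r$ and using the boundary/regularity condition $r^{N-1} v_r \to 0$ as $r\downto 0$ (which follows from smoothness of $v$ at the origin in the radial setting), I obtain the representation
\[
  v_r(r,t) = \frac{1}{r^{N-1}} \int_0^r s^{N-1}\bigl(v(s,t)-u(s,t)\bigr)\,ds.
\]

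Next, since $u\geq 0$ (by assumption) and $v\geq 0$ (by Lemma \ref{thm:nonneg}), I would drop the $-u$ contribution to get
\[
  v_r(r,t) \leq \frac{1}{r^{N-1}} \int_0^r s^{N-1} v(s,t)\,ds.
\]
Then I bound $v(s,t) \leq \norm[\Liom]{v(\cdot,t)} \leq \norm[\Liom]{u(\cdot,t)}$ using Lemma \ref{thm:vlpleu} in the case $p=\infty$, and the elementary integral $\int_0^r s^{N-1}\,ds = r^N/N$ supplies the factor $r/N$, yielding
\[
  v_r(r,t) \leq \frac{\norm[\Liom]{u(\cdot,t)}}{r^{N-1}} \cdot \frac{r^N}{N} = \frac{r}{N}\norm[\Liom]{u(\cdot,t)}.
\]

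There is essentially no obstacle here: the only subtlety is justifying the vanishing boundary term at $r=0$, which is immediate from the classical regularity assumed on $v$ and the fact that in the radial setting $v_r(0,t)=0$. The proof really hinges on the radial divergence identity, and it is precisely this one-dimensional integration trick that will later drive the crucial $L^q$-gradient estimates.
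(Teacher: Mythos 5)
Your proof is correct and follows essentially the same route as the paper: rewrite the radial elliptic equation as $(r^{N-1}v_r)_r = r^{N-1}(v-u)$, integrate from $0$ to $r$, use $u\geq 0$ and $v\leq \norm[\Liom]{u}$ (Lemma \ref{thm:vlpleu}, $p=\infty$) on the integrand, and evaluate the remaining integral to get the factor $r/N$. The paper simply compresses the two pointwise bounds into one inequality $v-u\leq \norm[\Liom]{u}$, but the argument is identical.
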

\begin{proof}
 Fix $t>0$. Equation \eqref{eq:radial2} can also be written in the form $ \frac1{r^{N-1}}(r^{N-1}v_r)_r = v-u$ and implies
 \[
  (r^{N-1}v_r)_r=r^{N-1}(v-u),
 \]
 hence 
 \[
  r^{N-1} v_r(r,t)= 0+\intnr \rho^{N-1} (v(\rho,t)-u(\rho,t)) d\rho \leq \norm[\Liom]{u(t)} \intnr\rho^{N-1}d\rho = \frac1N r^N \norm[\Liom] {u(t)}, 
 \]
 which leads to \eqref{eq:radialtrick}. 
\end{proof}

\subsection{Compatibility}
\label{sec:compatibility}
We say that a function $u_0$ satisfies the %first?
compatibility criterion (or, for short, that $u_0$ is compatible) if $u_0\in C^1(\Ombar)$ and $\delny u_0\amrand=0$.
If functions with this property are used as initial condition in parabolic problems, the solutions they yield have bounded first [spatial] derivatives on a time interval containing $0$ (\cite{ladyzhenskaya_solonnikov_uraltseva_91}).
This will be important in the derivation of the crucial estimate of $\intom |\na u|^q$ for solutions $u$ of \eqref{eq:epsprob} (Lemma \ref{thm:theestimate}) in terms of $\norm[L^q]{\na u_0}$ instead of only $\norm[L^q]{\na u(\cdot,\tau)}$ for arbitrary small $\tau>0$.

%In the one-dimensional case this problem does not occur because also derivatives of solutions to the HeatEquation (with Neumann BC) satisfy the heat equation (with Dirichlet BC) and therefore aro known to be continous with values in $L^q$ -- that implies $t\mapsto\norm[q]{\na u(t)}$ is continuous also in $0$ without the assupmtion of especially nice initial data.
At first we show that any function $u_0\in\Weqom$ can be approximated by compatible functions preserving all kind of 'nice properties':

\begin{lemma}
 \label{thm:compatibleapproximation}
 Let $q>n$, $u_0\in\Weqom$ be radially symmetric and nonnegative, let $\eps>0$. There is $\utilde_0\in\Ceombar$ with $\delny\utilde_0\amrand=0$ such that $\norm[\Weqom]{u_0-\utilde_0}<\eps$ and also $\utilde_0$ is radial and nonnegative.
\end{lemma}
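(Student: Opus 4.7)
The strategy is to regularize $u_0$ via the resolvent of the Neumann Laplacian, so that every qualitative requirement of the lemma becomes an automatic byproduct of the defining PDE. For $\delta>0$, let $\utilde_\delta$ denote the unique weak solution of
\[
  -\delta\Lap \utilde_\delta + \utilde_\delta = u_0 \quad\text{in }\Om,\qquad \delny \utilde_\delta\amrand = 0.
\]
Directly from this definition, $\utilde_\delta$ is compatible (the Neumann condition is built in); $\utilde_\delta\geq 0$ by the elliptic maximum principle applied to $-\delta\Lap v + v = u_0\geq 0$ (compare Lemma \ref{thm:nonneg}); $\utilde_\delta$ is radial because the uniqueness of solutions combined with the rotational invariance of the equation propagates the radial symmetry of $u_0$ (i.e.\ for any $\rho\in O(n)$, $\utilde_\delta\circ\rho$ solves the same problem, hence equals $\utilde_\delta$); and a rescaled version of the $\Wzqom$-estimate in Lemma \ref{thm:elliptregularity} gives $\utilde_\delta\in\Wzqom\embeddedinto\Ceombar$, where the Morrey embedding exploits $q>n$.

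It therefore remains to show $\utilde_\delta\to u_0$ in $\Weqom$ as $\delta\downto 0$; then choosing $\delta$ small enough and setting $\utilde_0:=\utilde_\delta$ finishes the proof. My plan for the convergence is a standard two-step density argument. Pick $u_0^{(j)}\in\Cinf(\Ombar)$ with $\|u_0-u_0^{(j)}\|_{\Weqom}$ arbitrarily small (using that $\Cinf(\Ombar)$ is dense in $\Weqom$, \emph{without} any compatibility condition being required), denote by $\utilde_\delta^{(j)}$ the Neumann regularization of $u_0^{(j)}$, and estimate the difference via the PDE it satisfies,
\[
  -\delta\Lap(\utilde_\delta^{(j)}-u_0^{(j)}) + (\utilde_\delta^{(j)}-u_0^{(j)}) = \delta\Lap u_0^{(j)},\qquad \delny(\utilde_\delta^{(j)}-u_0^{(j)})\amrand = -\delny u_0^{(j)}\amrand,
\]
which for smooth $u_0^{(j)}$ yields a quantitative rate $\|\utilde_\delta^{(j)}-u_0^{(j)}\|_{\Weqom}\to 0$ via $\delta$-weighted elliptic estimates. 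Combined with a $\delta$-uniform operator bound $\|(I-\delta\Lap)\inv\|_{\Weqom\to\Weqom}\leq C$ for the Neumann resolvent applied to the error $u_0-u_0^{(j)}$, this gives the desired $\Weqom$-convergence.

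The main obstacle I anticipate is precisely this uniform resolvent bound on $\Weqom$: $\Lqom$-convergence is immediate by testing the difference equation with $\utilde_\delta-u_0$, but upgrading it to $\Weqom$ needs either maximal $\Lq$-regularity for the Neumann problem, or, equivalently, strong continuity and $\Weqom$-boundedness of the Neumann heat semigroup together with the subordination identity $\utilde_\delta=\intninf e^{-t}e^{t\delta\Lap}u_0\,dt$. These are standard facts on smooth bounded domains but go slightly beyond the lemmata already collected in Section 2; once granted, radial symmetry, nonnegativity, and $\Ceombar$-regularity of the candidate $\utilde_0$ come for free, and the whole lemma reduces to producing a single admissible $\delta$.
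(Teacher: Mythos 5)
Your approach is genuinely different from the paper's and, in outline, it is sound — but as written it has two real gaps, and both of them are exactly the parts you flag as "standard facts that go slightly beyond" the paper's Section 2. The paper's own proof avoids all of this by exploiting the ball geometry directly: it sets
\[
 \uhat_0:=u_0\,\one_{B_{R-\delta/2}}+u_0\!\left(R-\tfrac\delta2\right)\one_{\R^n\setminus B_{R-\delta/2}},
\]
i.e.\ it freezes $u_0$ at its (well-defined, since $\Weqom\embeddedinto C(\Ombar)$ for $q>n$) radial value near the boundary, and then mollifies with a radial mollifier of width $\epstilde<\delta/4$. The mollification is then constant in an annulus around $\partial\Om$, so $\delny\utilde_0\amrand=0$ is automatic, radial symmetry and nonnegativity are inherited trivially, smoothness comes from mollification, and $\Weqom$-closeness for small $\delta,\epstilde$ is a textbook mollification fact. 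No resolvent, no interpolation, no boundary-layer estimates.

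In your proposal, the qualitative properties of $\utilde_\delta=(I-\delta\Lap)\inv u_0$ (compatibility, nonnegativity via the maximum principle, radial symmetry via uniqueness and rotational invariance, $\Ceombar$-regularity via $\Wzqom\embeddedinto C^{1,\alpha}(\Ombar)$) are all correctly justified. The convergence $\utilde_\delta\to u_0$ in $\Weqom$ is where the proof stops being self-contained. Concretely: (i) the $\delta$-uniform bound $\|(I-\delta\Lap)\inv\|_{\Weqom\to\Weqom}\leq C$ is not among the stated lemmata and requires either a complex-interpolation argument identifying $\Weqom$ with $[\Lqom,D(-\Lap_{\mathrm N})]_{1/2}$, or a semigroup/subordination argument — neither of which is routine enough to leave implicit in a paper at this level of rigor; and (ii) for fixed smooth $u_0^{(j)}$, the equation for $w_\delta:=\utilde_\delta^{(j)}-u_0^{(j)}$ carries inhomogeneous Neumann data $-\delny u_0^{(j)}$ of size $O(1)$, not $O(\delta)$, so $\|w_\delta\|_{\Weqom}\to 0$ does not follow from a naive "RHS is $O(\delta)$" bound; it requires a genuine boundary-layer estimate (the solution decays like $\sqrt\delta\, e^{-\mathrm{dist}(x,\partial\Om)/\sqrt\delta}$ near $\partial\Om$, giving $\|\na w_\delta\|_{\Lqom}=O(\delta^{1/(2q)})$). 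Both claims are true, but they amount to a separate piece of analysis that you have not supplied. So: correct strategy, real gaps in the execution, and substantially more machinery than the paper's one-line extend-and-mollify construction.
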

\begin{proof}
 Given $\eps>0$ consider the standard mollifications $\eta^\epstilde\star \uhat_0 $ (cf. \cite[C.4]{evans_98}) of 
 \[
  \uhat_0:=u_0\one_{B_{R-\frac\delta2}}+u_0\left(R-\frac\delta2\right)\,\one_{\R^n\ohne B_{R-\frac\delta2}}
 \]
 in $\Om$, where $\one$ denotes the characteristic function of a set, for an appropriate, small choice of $\delta$ and $\epstilde$.
\end{proof}

% \begin{proof}
% Due to $q>n$, $u$ is continuous. Let $M:=\sup_\Om |u|$. By the monotone convergence theorem of Beppo-Levi, 
% \[
%  \norm[\Weq(B_{R-\delta})]{u}\to \norm[\Weqom] \quad \text{as }\delta\to 0. 
% \]
% Now choose $\delta>0$ so small that
% \[
%  \max\setl{\norm[\Weq(B_R)]{u}-\norm[\Weq(B_{R-2\delta})]{u},\int_{B_R\ohne B_{R-\delta}} M}\leq \frac\eps4.
% \]
% Consider the mollification $\eta^\epstilde\star\uhat=\utilde$ with 
% \begin{equation}
%  \label{eq:defuhat}
%  \uhat:=u\one_{B_{R-\frac\delta2}}+u(R-\frac\delta2)\one_{\R^n\ohne B_{R-\frac\delta2}}
% \end{equation}
% and $\eta^\epstilde$ denoting a standard mollifier as in \cite[C.4]{evans_98}. 
% Then 
% \[
%  \na (\eta^\epstilde\star\uhat)=\eta^\epstilde\star[\na u\cdot\one_{B_{R-\frac\delta2}}]. 
% \]
% And if in \eqref{eq:defuhat}, $\epstilde>0$ is chosen so small that 
% \[
%  \epstilde\leq\frac\delta2\;\text { as well as } \norm[\Weq(B_{R-\delta})]{u-\eta^\epstilde\star\uhat}\leq\frac\eps4
% \]
% then 
% \[
%  \delny \utilde\amrand=0
% \]
% and, furthermore, 
% \begin{align*}
%  \norm[\Weq(B_R)]{\utilde-u}&\leq \norm[\Weq(B_{R-\delta})]{u-\utilde}+\norm[\Weq(B_R\ohne B_{R-\delta})]{u} +\norm[\Weq(B_R\ohne B_{R-\delta})]{\utilde}\\
% &\leq \frac\eps4+\frac\eps4+\norm[L^q(B_R\ohne B_{R-\delta})]M+\norm[L^q(B_R\ohne B_{R-\delta})]{\utilde}\\
% &\leq \frac{3\eps}4+\underbrace{\norm[L^q(B_R\ohne B_{R-2\delta})]{\na \uhat}}_{=\norm[L^q(B_R\ohne B_{R-2\delta})]{\na u}}\leq \eps.
% \end{align*}
% And of course, $\utilde\geq 0$, because $u\geq 0$. Also symmetry is preserved.
% \end{proof}

\subsection{The most important estimate}
\label{sec:estimate}
In this section we are going to derive an inequality 
which shows that we can control the $\norm[\Weqom]{\cdot}$-norm of solutions to \eqref{eq:epsprob} by their $\Liom$-norm.
For the following computation we define, for $\eta>0$, 
\[
  \Phi_\eta(s):=(s^2+\eta)^{\frac q2}, \quad s\in\R,
\] 
and compute
 \[
  \Phi_\eta'(s)= q s(s^2+\eta)^{\frac q2-1},
 \]
 which implies 
 \begin{equation}
  \label{eq:sphietastrich}
  s\Phi_\eta'(s)\leq q\Phi(s) \qquad \textrm{as well as}\qquad s\Phi_\eta'(s)\geq 0 
 \end{equation}
 for $s\in\R$ and 
 \begin{align}
\label{eq:phietazweistrich}
  \Phi_\eta''(s)
 &= q ((q-1) s^2+\eta )(s^2+\eta)^{\frac q2-2}\geq 0,\qquad s\in\R.
 \end{align}
 In preparation for later calculations we also note that for $a,s\in\R$
\begin{equation}
 \label{eq:phietaminussphietastrich}
 \Phi_\eta(s)-as\Phi_\eta'(s)=(1-aq)s^2(s^2+\eta)^{\frac q2-1}+\eta(s^2+\eta)^{\frac q2-1}.
\end{equation}

%replacing wk, Lem. 3.5 and Corollary 3.6
\begin{lemma}
\label{thm:estimate}
  Let $\kappa\geq 0, \my>0, q>n, T>0$.\\
 For any radial classical solution $u$ of \eqref{eq:epsprob} in $\Om\times(0,T)$ with radial initial data $u_0\in\Weqom$, and arbitrary $\tau\in(0,T)$, $t\in(\tau, T)$, we have (with $K$ as $C$ from Lemma \ref{thm:elliptregularity})
\begin{align*}
 \int_{\Om}& \Phi_\eta\left(|\na u(\cdot,t)|\right)\leq \int_{\Om} \Phi_\eta(|\na u(\cdot, \tau)|)\\
&+ \inttaut\left((5q+\frac{q-2}q \eta)\norm[\Liom]{u}+\kappa q\right)\int_{\Om} \Phi_\eta\left(|\na u|\right)+|\Om|\inttaut\left(K\norm[\Liom]{u}^{1+q}+\frac {2\eta}q \norm[\Liomnt]{u}\right)
\end{align*}
\end{lemma}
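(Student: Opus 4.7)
The plan is to differentiate $\int_\Om \Phi_\eta(|\na u|)\,dx$ in time, substitute the PDE, bound each resulting contribution, and integrate from $\tau$ to $t$. Since $u$ is radial, $|\na u| = |u_r|$ and $\int_\Om \Phi_\eta(|\na u|)\,dx = n\omega_n \int_0^R \Phi_\eta(u_r) r^{n-1}\,dr$, whose time derivative equals $n\omega_n\int_0^R \Phi_\eta'(u_r)\, u_{tr}\, r^{n-1}\,dr$. The first move will be to differentiate the radial form \eqref{eq:radial1} of the PDE in $r$ to express $u_{tr}$, and thereby split the work into four groups of terms (diffusion, chemotaxis, linear, logistic).

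For the $\eps$-diffusion part, after one integration by parts in $r$ (boundary terms vanish because $u_r(R,t)=0$ and $r^{n-1}|_{r=0}=0$), I expect to arrive at a sum of two non-positive integrals: one weighted by $\Phi_\eta''(u_r)\geq 0$ from \eqref{eq:phietazweistrich}, the other by $u_r\Phi_\eta'(u_r)\geq 0$ from \eqref{eq:sphietastrich}; this whole contribution is simply dropped. The logistic term $-2\my u\cdot u_r\Phi_\eta'(u_r)r^{n-1}$ is likewise $\leq 0$ by \eqref{eq:sphietastrich} and $u\geq 0$, so it is dropped. The linear growth term yields at most $\kappa q\int_\Om \Phi_\eta(|\na u|)$, again by \eqref{eq:sphietastrich}, accounting for the $\kappa q\int\Phi_\eta$ contribution in the claimed estimate.

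The chemotaxis part is the substance of the argument. Differentiating $-u_r v_r - u v_{rr}-\tfrac{n-1}{r}u v_r$ in $r$ produces six sub-terms. For the $u_{rr} v_r$-term I will use $u_{rr}\Phi_\eta'(u_r) = \partial_r\Phi_\eta(u_r)$ and integrate by parts in $r$; after invoking the radial elliptic identity $v_{rr} + \tfrac{n-1}{r}v_r = v - u$ this reduces to $\int(v-u)\Phi_\eta(u_r)r^{n-1}\,dr$, which I bound by $2\|u\|_\infty\int \Phi_\eta$ thanks to Lemma \ref{thm:vlpleu}. In the $u_r v_{rr}$-term I substitute the same elliptic identity for $v_{rr}$; its $(v-u)$ piece is handled as before, and the singular piece $\tfrac{v_r}{r}$ is tamed by $|v_r|\leq\tfrac{r}{n}\|u\|_\infty$ from Lemma \ref{thm:radialestimate}, which cancels the $1/r$ factor. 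The $u v_{rrr}$-term is treated by one further integration by parts in $r$, reducing it to a $v_{rr}$-integral of the previous kind. The remaining $\tfrac{1}{r}u_rv_r$, $\tfrac{1}{r}uv_{rr}$, and $\tfrac{1}{r^2}uv_r$ pieces are cured in the same way by Lemma \ref{thm:radialestimate}. Whenever the combination $u_r\Phi_\eta'(u_r)$ arises I will apply \eqref{eq:phietaminussphietastrich} in the form $u_r\Phi_\eta'(u_r) = q\Phi_\eta(u_r) - q\eta(u_r^2+\eta)^{q/2-1}$: the $\Phi_\eta$-pieces accumulate into the $5q\|u\|_\infty\int\Phi_\eta$ coefficient, and the $\eta$-remainders into the $\tfrac{q-2}{q}\eta\|u\|_\infty\int\Phi_\eta$ and $\tfrac{2\eta}{q}\|u\|_\infty|\Om|$ corrections. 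Finally, a residual piece not involving $\na u$, of the form $\|u\|_\infty\int v^{q+1}$ or $\|u\|_\infty\int v_{rr}\cdot u^q$, is controlled via H\"older's inequality combined with the $L^q$-elliptic regularity of Lemma \ref{thm:elliptregularity} together with $\int u^q\leq |\Om|\|u\|_\infty^q$, producing exactly the $|\Om|K\|u\|_\infty^{1+q}$ source term.

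The hard part is the chemotaxis step. Without radial symmetry, the singular $1/r$ and $1/r^2$ factors created by differentiating in $r$ would be fatal; they are precisely what Lemma \ref{thm:radialestimate} is tailored to absorb, by supplying a compensating factor of $r$. Tracking the coefficients so that they sum to exactly $5q$ rather than a larger constant is tedious but mechanical, once the decomposition \eqref{eq:phietaminussphietastrich} is applied systematically. After assembling the bounds above, the final step is to integrate the resulting pointwise-in-time differential inequality from $\tau$ to $t$, which gives the announced estimate.
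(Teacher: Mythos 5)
Your overall plan (differentiate $\int_\Om\Phi_\eta(|\na u|)$ in time, insert the radial PDE, discard the signed $\eps$- and logistic pieces, absorb the $1/r$ singularities via Lemma \ref{thm:radialestimate}, control the residual $\|\na v\|_{L^q}^q$ by elliptic regularity, integrate in time) matches the paper's strategy, and your handling of the $u_{rr}v_r$- and $u_r v_{rr}$-pieces is correct. However, there is a genuine gap in your treatment of the three terms that the paper labels $I_6$, $I_7$, $I_9$, namely the $u\,v_{rrr}$-, $\frac{1}{r}u\,v_{rr}$- and $\frac{1}{r^2}u\,v_r$-contributions.

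First, integrating $\int_\delta^R r^{N-1}u\,v_{rrr}\,\Phi_\eta'(u_r)\,dr$ by parts to ``reduce it to a $v_{rr}$-integral'' transfers the derivative onto the factor $r^{N-1}u\,\Phi_\eta'(u_r)$ and produces, among others, the term $\int r^{N-1}u\,u_{rr}\,\Phi_\eta''(u_r)\,v_{rr}\,dr$. This involves $u_{rr}$, for which no bound appears on the right-hand side of the claimed estimate, and it cannot be absorbed into the discarded $-\eps\int r^{N-1}u_{rr}^2\Phi_\eta''(u_r)$ without introducing an $\eps^{-1}$, which would ruin the $\eps$-uniformity that this lemma needs. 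Second, for the $\frac{1}{r}u\,v_{rr}$- and $\frac{1}{r^2}u\,v_r$-pieces, Lemma \ref{thm:radialestimate} only supplies one power of $r$, so after applying it you are still left with integrals of the form $\int r^{N-2}u\,\Phi_\eta'(u_r)\,dr$, which have a weight $r^{N-2}$ rather than $r^{N-1}$; such integrals are not controlled by $\int r^{N-1}\Phi_\eta(u_r)\,dr + \mathrm{const.}$ The paper avoids both difficulties by \emph{not} estimating these three terms individually: it differentiates the radial elliptic equation to express $v_{rrr}$ (equivalently, uses $r^{N-1}(\Lap v)_r = r^{N-1}v_{rrr}+(N-1)r^{N-2}v_{rr}-(N-1)r^{N-3}v_r$ together with $(\Lap v)_r=v_r-u_r$), so that $I_6+I_7+I_9$ collapses exactly to $-\int r^{N-1}v_r u\Phi_\eta'(u_r)+\int r^{N-1}u u_r\Phi_\eta'(u_r)$, which carry the correct weight $r^{N-1}$ and are then handled via \eqref{eq:sphietastrich}, Young's inequality, and Lemma \ref{thm:elliptregularity}. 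This cancellation is the essential step missing from your argument.
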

\begin{proof}
 Denote $\Om_\delta=\Om\ohne B_\delta(0)$ and let $0<\tau<t<T$. \\
 Note that on $\Om_\delta\times (\tau,t)$ all derivatives of $u$ appearing in the following calculation are smooth and bounded, and we can change the order of integration and differentiation to start with
 \begin{align*}
 \int_{\Om_\delta} \Phi_\eta\left(|\na u(\cdot,t)|\right) - \int_{\Om_\delta} \Phi_\eta(|\na u(\cdot, \tau)|) 
%by the usual abuse of notation, $\Phi_\eta$ denotes functions on $\Rn$ as well as (radial setting) on $\R$.
=\inttaut\intdlR r^{N-1}\Phi_\eta'(u_r) u_{rt}.
\end{align*}
Here we use equation \eqref{eq:radial1} for $u_t$:
\begin{align*}
\int_{\Om_\delta}& \Phi_\eta\left(|\na u(\cdot,t)|\right) - \int_{\Om_\delta} \Phi_\eta(|\na u(\cdot, \tau)|) \\
&= \inttaut\intdlR r^{N-1}\Phi_\eta'(u_r) \left(\eps u_{rr}+\eps\frac{N-1}{r} u_r -u_rv_r-uv_{rr}-\frac{N-1}{r}uv_r +\kappa u -\my u^2\right)_r\\
&= \eps \inttaut\intdlR r^{N-1}u_{rrr}\Phi_\eta'(u_r)+\eps\inttaut\intdlR r^{N-2}(N-1)\ddr(\Phi_\eta(u_r))-\eps \inttaut\intdlR r^{N-3}(N-1)u_r \Phi_\eta'(u_r)\\
&\;\;-\inttaut\intdlR r^{N-1}v_ru_{rr}\Phi_\eta'(u_r)-2\inttaut\intdlR r^{N-1} v_{rr}u_r \Phi_\eta'(u_r)-\inttaut\intdlR  r^{N-1}v_{rrr}u\Phi_\eta'(u_r)\\
&\;\;+\intdlR r^{N-3}(N-1)v_ru\Phi_\eta'(u_r)-\inttaut\intdlR r^{N-2}(N-1)v_ru_r \Phi_\eta'(u_r)-\inttaut\intdlR r^{N-2}(N-1)v_{rr}u\Phi_\eta'(u_r)\\
&\;\;+\kappa \inttaut\intdlR r^{N-1}u_r\Phi_\eta'(u_r)-2\my \inttaut\intdlR r^{N-1} uu_r \Phi_\eta'(u_r)=:I_1+I_2+\ldots+I_{11}.
\end{align*}

Now we integrate by parts twice in the first term 
\begin{align*}
 I_1&=-\eps \inttaut\intdlR r^{N-1} u_{rr}\Phi_\eta''(u_r)u_{rr}-\eps  \inttaut\intdlR r^{N-2} u_{rr}\Phi_\eta'(u_r)(N-1)+\eps \inttaut r^{N-1} u_{rr}\Phi_\eta'(u_r)\vonbis \delta R\\
&\leq 0+\eps(N-1)(N-2)\inttaut\intdlR r^{N-3}\Phi_\eta(u_r)-\eps\inttaut r^{N-2}\Phi_\eta(u_r)(N-1)\vonbis \delta R+\eps \inttaut r^{N-1} u_{rr}\Phi_\eta'(u_r)\vonbis \delta R,
\end{align*}
where we also used \eqref{eq:phietazweistrich}, and once in the second integral
\[
 I_2=+ \eps(N-1)\inttaut r^{N-2}\Phi_\eta(u_r)\vonbis \delta R -\eps(N-1)(N-2)\inttaut\intdlR r^{N-3}\Phi_\eta(u_r). 
\]
Upon addition, some of these summands vanish and estimating $I_3\leq 0$ by \eqref{eq:sphietastrich}, we obtain
\[
 I_1+I_2+I_3\leq\eps\inttaut r^{N-1}u_{rr}\Phi_\eta'(u_r)\vonbis \delta R=-\eps\inttaut r^{N-1}u_{rr}\Phi_\eta'(u_r)\big|_{r=\delta},
\]
because $u_r(R,\ttilde)=0$ for all $\ttilde\in(0,T)$.\\
Also the next term can be rewritten by integration by parts 
and using $v_r(R,\ttilde)=0$ for $\ttilde\in (0,T)$.
\begin{align*}
 I_4
 &=\inttaut r^{N-1}v_r\Phi_\eta(u_r)\big|_{r=\delta} +(N-1)\inttaut\intdlR r^{N-2} v_r \Phi_\eta(u_r)+\inttaut\intdlR r^{N-1}v_{rr} \Phi_\eta(u_r).
\end{align*}
Inserting \eqref{eq:radial2} to express $v_{rrr}$ in $I_6$ differently, we obtain (among others) terms to cancel out $I_7$ and $I_9$:
\begin{align*}
 I_6
&=-\inttaut\intdlR r^{N-1}v_ru\Phi_\eta'(u_r)
+\inttaut\intdlR r^{N-1}uu_r\Phi_\eta'(u_r)
-I_9-I_7.
\end{align*}

Together with the trivial observation that $I_{11}\leq 0$ by \eqref{eq:sphietastrich}, these estimates and reformulations give 
\begin{align*}
 \int_{\Om_\delta}& \Phi_\eta\left(|\na u(\cdot,t)|\right) - \int_{\Om_\delta} \Phi_\eta(|\na u(\cdot, \tau)|)
\leq -\eps\inttaut r^{N-1}u_{rr}\Phi_\eta'(u_r)\big|_{r=\delta} +\inttaut r^{N-1}v_r\Phi_\eta(u_r)\big|_{r=\delta}\\
&\;\;+(N-1)\inttaut\intdlR r^{N-2} v_r \Phi_\eta(u_r)+\inttaut\intdlR r^{N-1}v_{rr} \Phi_\eta(u_r)\\
&\;\;-2\inttaut\intdlR r^{N-1} v_{rr}u_r\Phi_\eta'(u_r)
-\inttaut\intdlR r^{N-1}v_ru\Phi_\eta'(u_r)\\
&\;\;+\inttaut\intdlR r^{N-1} uu_r\Phi_\eta'(u_r)
-\inttaut\intdlR r^{N-2}(N-1) v_ru_r \Phi_\eta'(u_r)+\kappa \inttaut\int_\delta^R r^{N-1} u_r \Phi_\eta'(u_r) .
\end{align*}

Passing to the limit $\delta\downto 0$ by boundedness of $u_r, u_{rr}, v_r$ on $(\tau,t)$ and the dominated convergence theorem
%beachte auch: Sogar v_rr auf Nullumgebung (wo anders brauchen wirs für (DOM) gar nicht) beschränkt, da cts.
%which is possible, because all functions are bounded
we arrive at %\Phi_\eta'(u_r(R))=0, v_r(R)=0
\begin{align*}
 \int_{\Om}& \Phi_\eta\left(|\na u(\cdot,t)|\right) - \int_{\Om} \Phi_\eta(|\na u(\cdot, \tau)|)
=(N-1)\inttaut\intnR r^{N-2}v_r\left[\Phi_\eta(u_r)- u_r\Phi_\eta'(u_r)\right] \\
&+ \inttaut\intnR r^{N-1}v_{rr} \left[\Phi_\eta(u_r)- 2 u_r\Phi_\eta'(u_r)\right]
-\inttaut\intnR r^{N-1}v_ru\Phi_\eta'(u_r)\\
&\;\;+\inttaut\intnR r^{N-1}uu_r\Phi_\eta'(u_r)+\kappa q\inttaut\int_{\Om} \Phi_\eta\left(|\na u|\right)\;=\;I_A+I_B+I_C+I_D+I_E
\end{align*}
and with the help of \eqref{eq:phietaminussphietastrich}, the first of these integrals can be rewritten as
\begin{align*}
 I_A&=(N-1)(1-q)\inttaut\intnR r^{N-2}v_ru_r^2\left(u_r^2+\eta\right)^{\frac q2-1} + \eta(N-1)\inttaut\intnR r^{N-2}v_r\left(u_r^2+\eta\right)^{\frac q2-1}.
\end{align*}
Treating the second term similarly and inserting \eqref{eq:phietaminussphietastrich} and \eqref{eq:radial2} gives 
\begin{align*}
 I_B
 &=(1-2q) \inttaut\intnR r^{N-1} v_{rr}u_r^2\left(u_r^2+\eta\right)^{\frac q2-1} +\eta \inttaut\intnR r^{N-1} v_{rr} \left(u_r^2+\eta\right)^{\frac q2-1}\\
 &=(2q-1) \inttaut\intnR r^{N-1} (u-v)u_r^2\left(u_r^2+\eta\right)^{\frac q2-1} \\
&\;\;+(N-1)(2q-1) \inttaut\intnR r^{N-2} v_ru_r^2\left(u_r^2+\eta\right)^{\frac q2-1}+\eta \inttaut\intnR r^{N-1} \left(v-u-\frac{N-1}r v_r\right) \left(u_r^2+\eta\right)^{\frac q2-1},
\end{align*}
where also $(u-v)u_r^2\leq u\left(u_r^2+\eta\right)$. For the sum of these terms we are thereby led to 
\begin{align*}
 I_A+I_B&\leq
(N-1)q\inttaut\intnR r^{N-2}v_ru_r^2\left(u_r^2+\eta\right)^{\frac q2-1}+(2q-1) \inttaut\intnR r^{N-1} u\Phi_\eta(u_r)\\
&\;+\eta \inttaut\intnR r^{N-1} (v-u) \left(u_r^2+\eta\right)^{\frac q2-1},
\end{align*}
where we can use Lemma \ref{thm:radialestimate} to infer 
\begin{align*}
I_A+I_B &\leq q\frac{N-1}{N} \inttaut\intnR r^{N-2}r\norm[\Liom]{u} u_r^2\left(u_r^2+\eta\right)^{\frac q2-1}
\;\;+ (2q-1) \inttaut\intnR r^{N-1} u\Phi_\eta(u_r) \\
&\;\;+\eta \inttaut\intnR r^{N-1} (v-u) \left(u_r^2+\eta\right)^{\frac q2-1} \\
&\leq (q\frac{N-1}{N}+2q-1) \inttaut\norm[\Liom]{u} \intnR r^{N-1} \Phi_\eta(u_r) 
+\eta \inttaut\intnR r^{N-1} (v-u) \left(u_r^2+\eta\right)^{\frac q2-1}.
\end{align*}
Furthermore adding the other terms and making use of \eqref{eq:sphietastrich} in $I_D$, 

\begin{align*}
I_A+\ldots+I_E
&\leq\inttaut\left(((4-\frac1N)q-1)\norm[\Liom]{u}+\kappa q\right)\int_{\Om} \Phi_\eta\left(|\na u|\right)\\
&\quad+q\inttaut\norm[\Liom]u\int_{\Om} |\na v||\na u|(|\na u|^2+\eta)^{\frac q2-1}
+\eta \inttaut\intnR r^{N-1} (v-u) \left(u_r^2+\eta\right)^{\frac q2-1}\\
&\leq \inttaut\left(4q\norm[\Liom]{u}+\kappa q\right)\int_{\Om} \Phi_\eta\left(|\na u|\right)
+q \inttaut\norm[\Liom]{u}\int_{\Om} |\na v| (|\na u|^2+\eta)^{\frac {q-1}2}\\
&\qquad\qquad\qquad\qquad\qquad+\eta \inttaut\intnR r^{N-1} v \left(u_r^2+\eta\right)^{\frac q2-1}.
\end{align*}
Here an application of Young's inequality gives 
\begin{align*}
I_A+\ldots+I_E
&\leq \inttaut(4q\norm[\Liom]{u}+\kappa q)\int_{\Om} \Phi_\eta\left(|\na u|\right)
+\inttaut\norm[\Liom]{u} \int_{\Om} |\na v|^q \\
&\;\;+q\frac{q-1}{q} \inttaut\norm[\Liom] u\int_{\Om} \left(|\na u|^2+\eta\right)^{\frac {q}2}
+\eta  \inttaut\norm[\Liom]{v}\intnR r^{N-1} \left(u_r^2+\eta\right)^{\frac q2-1}.
\end{align*}
Merging first and third term, with Lemma \ref{thm:elliptregularity} (and $K$ as provided by that lemma) and Lemma \ref{thm:vlpleu} we have
\begin{align*}
I_A+\ldots+I_E
&\leq \inttaut(5q\norm[\Liom]{u}+\kappa q)\int_{\Om} \Phi_\eta\left(|\na u|\right)+K\inttaut\norm[\Liom]{u}\norm[\Lqom]{u}^q \\
&\;\;+\eta  \inttaut\norm[\Liom]{u}\intnR r^{N-1} \left(\frac{q-2}{q}\left(u_r^2+\eta\right)^{\frac {q-2}2\frac q{q-2}}+\frac 2q\cdot 1^{\frac{q}2}\right)\\
&\leq \inttaut(5q\norm[\Liom]{u}+\kappa q)\int_{\Om} \Phi_\eta\left(|\na u|\right)+\inttaut|\Om|\norm[\Liom]{u}^{1+q}\\
&\;\;+\eta \frac{q-2}{q}  \inttaut\norm[\Liom]{u}\int_{\Om} (|\na u|^2+\eta)^{\frac {q}2}        +\eta \inttaut \frac 2q  |\Om| \norm[\Liomtaut]{u}\\
&\leq \inttaut\left(\left(5q+\frac{q-2}q \eta\right)\norm[\Liom]{u}+\kappa q\right)\int_{\Om} \Phi_\eta\left(|\na u|\right)+|\Om|\inttaut\left(K\norm[\Liom]{u}^{1+q}+\frac {2\eta}q\norm[\Liomnt]{u}\right)
\end{align*}

In total, these estimates show the claim.
\end{proof}
\begin{remark}
 In the above proof (and all affected propositions), $5q$ could be replaced by $(5-\frac1N)q-2$.
\end{remark}

\begin{lemma}
\label{thm:estimateohneeta}
Under the assumptions of Lemma \ref{thm:estimate} the following holds:
 \[
\intom |\na u(\cdot,t)|^q\leq \left(\norm[\Lqom]{\na u(\cdot, \tau)}^q + \left(|\Om|K\intnt\norm[\Liom]{u(\cdot,s)}^{1+q}ds\right)\right)\exp\left( (5q\intnt\norm[\Liom]{u(\cdot,s)}ds +\kappa qt\right)
\]
\end{lemma}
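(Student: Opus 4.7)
The plan is to pass to the limit $\eta\downto 0$ in the inequality furnished by Lemma \ref{thm:estimate} and then apply the (integral form of) Gronwall's lemma to the resulting linear differential inequality for $y(t):=\intom|\na u(\cdot,t)|^q$.

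First I would observe that for each fixed $s\in\R$ the map $\eta\mapsto \Phi_\eta(s)=(s^2+\eta)^{q/2}$ is monotone and decreases to $|s|^q$ as $\eta\downto 0$. Since $u(\cdot,\ttilde)$ is smooth on $\Ombar$ for every $\ttilde\in(\tau,t]$ (as $(u,v)$ is a classical solution), monotone (or dominated) convergence gives
\[
 \intom \Phi_\eta(|\na u(\cdot,\ttilde)|)\downto\intom|\na u(\cdot,\ttilde)|^q\quad\textrm{as }\eta\downto 0,
\]
and the same for the integrand $\intom\Phi_\eta(|\na u|)$ appearing in the time integral on the right. The two remaining $\eta$-dependent contributions in Lemma \ref{thm:estimate}, namely $\frac{q-2}{q}\eta\norm[\Liom]{u}\intom\Phi_\eta(|\na u|)$ and $\frac{2\eta}{q}|\Om|\norm[\Liomnt]{u}$, vanish in the limit because $\norm[\Liomnt]{u}<\infty$ (by classicality) and the remaining factors are bounded in $\eta\in(0,1]$. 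Therefore, sending $\eta\downto 0$ yields
\[
 \intom|\na u(\cdot,t)|^q\leq \intom|\na u(\cdot,\tau)|^q+\inttaut\bigl(5q\norm[\Liom]{u(\cdot,s)}+\kappa q\bigr)\intom|\na u(\cdot,s)|^q\,ds+K|\Om|\inttaut\norm[\Liom]{u(\cdot,s)}^{1+q}\,ds.
\]

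Next I would write this as a scalar integral inequality $y(t)\leq A+\int_\tau^t a(s)y(s)\,ds$ with
\[
 A:=\intom|\na u(\cdot,\tau)|^q+K|\Om|\inttaut\norm[\Liom]{u(\cdot,s)}^{1+q}\,ds,\qquad a(s):=5q\norm[\Liom]{u(\cdot,s)}+\kappa q,
\]
observing that the map $t\mapsto K|\Om|\inttaut\norm[\Liom]{u(\cdot,s)}^{1+q}ds$ is nondecreasing (so the standard Gronwall argument applies to this time-dependent "constant"). The classical Gronwall lemma then gives
\[
 y(t)\leq \left(\intom|\na u(\cdot,\tau)|^q+K|\Om|\inttaut\norm[\Liom]{u(\cdot,s)}^{1+q}\,ds\right)\exp\left(\inttaut\bigl(5q\norm[\Liom]{u(\cdot,s)}+\kappa q\bigr)ds\right).
\]
Finally, enlarging the domains of the time integrations from $(\tau,t)$ to $(0,t)$ (which only increases the right-hand side since all integrands are nonnegative) yields the claimed inequality.

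No step is truly hard here: the only care needed is the dominated/monotone convergence in $\eta$ (which relies on classicality of $u$ on $\Ombar\times[\tau,t]$) and the observation that the integral inequality is of Gronwall type with a monotone additive term; once these are in place the result follows by rearrangement.
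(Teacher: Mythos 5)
Your proof is correct and follows essentially the same route as the paper's: both pass to the limit $\eta\downto 0$ using smoothness of $u$ and apply a Gronwall argument to reach the conclusion, with the only (immaterial) difference being the order — the paper applies Gronwall to the $\eta$-dependent inequality first and then sends $\eta\downto 0$, whereas you take the limit first and then apply Gronwall, which requires one extra (routine, and correctly justified) dominated-convergence step for the time integral.
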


\begin{proof}
Starting from Lemma \ref{thm:estimate}, by Gronwall's inequality we can conclude
\begin{align*}
 \int_{\Om}& \Phi_\eta(\na u(\cdot,t))\leq \left(\int_{\Om} \Phi_\eta(\na u(\cdot, \tau))+\inttaut|\Om|\norm[\Liom]{u}^{1+q}+\eta t \frac 2q  |\Om| \norm[\Liomnt]{u} \right)\\
&\qquad\qquad\qquad\qquad\qquad\qquad\qquad\qquad\qquad\qquad\qquad\qquad\cdot\exp\left( \inttaut\left(\left(5q+\frac{q-2}q \eta\right)\norm[\Liom]{u}+\kappa q\right) \right).
\end{align*}
By smoothness of $u$ in $\Ombar\times(0,T)$, we have for all $s\in(0,T)$
%As in the limit $\eta\downto 0$ by the dominated convergence theorem
\[
 \intom \Phi_\eta(\na u(\cdot,s)) \to \intom |\na u(\cdot,s)|^q \quad \textrm{ as }\eta\downto 0.
\]
%\Om bd, (.^2+\eta)^q/2\leq .^q+g(\eta), g cts, => (...)^q/2\leq .^q+1 (intbar), Lebesgue => Beh
From this we gain 
\[
 \int_{\Om} |\na u(\cdot,t)|^q \leq \left(\int_{\Om} |\na u(\cdot, \tau)|^q +|\Om| \inttaut\norm[\Liom]{u}^{1+q}\right)\exp\left( \inttaut(5q\norm[\Liom]{u}) +\kappa q t \right), 
\]
which implies the assertion.
\end{proof}
\begin{cor}
\label{thm:theestimate}
 In addition to the hypotheses of Lemma \ref{thm:estimate}, let $u_0$ be compatible. Then 
\[
 \int_{\Om} |\na u(\cdot,t)|^q \leq \left(\int_{\Om} |\na u_0|^q +|\Om| \intnt\norm[\Liom]{u}^{1+q}\right)\exp\left( \intnt(5q\norm[\Liom]{u}) +\kappa q t \right).
\]
\end{cor}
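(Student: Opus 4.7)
The plan is to pass to the limit $\tau \downto 0$ in the inequality from Lemma \ref{thm:estimateohneeta}, using the extra regularity near $t=0$ afforded by the compatibility of $u_0$.

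More concretely, by the compatibility assumption ($u_0 \in \Ceombar$ with $\delny u_0|_{\rand}=0$), the classical parabolic theory of \cite{ladyzhenskaya_solonnikov_uraltseva_91} applies to the semilinear problem \eqref{eq:epsprob}, giving some $t_0 \in (0,T)$ on which $\na u$ is bounded on $\Ombar \times [0,t_0]$ and $u \in C(\Ombar \times [0,t_0])$ with $u(\cdot,0)=u_0$. In particular, $\norm[\Liom]{u(\cdot,s)}$ stays bounded as $s\downto 0$, so both integrands $\norm[\Liom]{u}$ and $\norm[\Liom]{u}^{1+q}$ are integrable on $(0,t)$ and
\[
 \inttaut \norm[\Liom]{u(\cdot,s)}\,ds \longrightarrow \intnt \norm[\Liom]{u(\cdot,s)}\,ds, \quad \inttaut \norm[\Liom]{u(\cdot,s)}^{1+q}\,ds \longrightarrow \intnt \norm[\Liom]{u(\cdot,s)}^{1+q}\,ds
\]
as $\tau \downto 0$.

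For the initial-data term I would use that $\na u(\cdot,\tau) \to \na u_0$ pointwise on $\Om$ by continuity of $\na u$ on $\Ombar \times [0,t_0]$, combined with the uniform bound on $\na u$ on $\Ombar \times [0,t_0]$ (which dominates by a constant function, integrable on the bounded domain $\Om$). Dominated convergence then yields
\[
 \intom |\na u(\cdot,\tau)|^q \longrightarrow \intom |\na u_0|^q \qquad \text{as }\tau\downto 0.
\]

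Feeding these three convergences into Lemma \ref{thm:estimateohneeta} (all factors in the estimate are monotone/continuous in $\tau$) gives the desired inequality. The only potentially delicate point is the passage $\na u(\cdot,\tau) \to \na u_0$; but once one invokes the compatibility condition together with the cited parabolic regularity result, this reduces to a straightforward dominated convergence argument, so there is no serious obstacle.
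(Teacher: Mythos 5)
Your proposal is correct and matches the paper's argument: both pass to the limit $\tau\downto 0$ in Lemma \ref{thm:estimateohneeta}, using the boundedness of $\na u$ up to $t=0$ (which holds by parabolic regularity for compatible initial data) and dominated convergence to identify $\lim_{\tau\downto 0}\intom|\na u(\cdot,\tau)|^q=\intom|\na u_0|^q$. The extra remarks about the convergence of the time integrals are a harmless elaboration of what the paper leaves implicit.
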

\begin{proof}
Since from the dominated convergence theorem we know that
\[
 \int_{\Om} |\na u(\cdot, \tau)|^q\to \intom|\na u_0|^q
\]
as $\tau\downto 0$ due to the boundedness of $\na u$ for solutions of \eqref{eq:epsprob} with compatible initial data, this is a direct consequence of Lemma \ref{thm:estimateohneeta}.
\end{proof}

\subsection{Epsilon-independent time of existence}
\label{sec:existencetime}
We begin this section with some Gronwall-type lemma which we will need during the next proof:
\begin{lemma}
\label{thm:owngronwall}
 Let $f\colon[0,\infty)\to \R$ nondecreasing and locally Lipschitz continuous, let $y_0\in\R$.
 Denote by $y$ the solution of $y(0)=y_0$, $y'(t)=f(y(t))$ on some interval $(0,T)$ and assume that the continuous function $z\colon[0,T)\to \R$ satisfies 
\[
 z(t)\leq z(0)+\intnt f(z(\tau)) d\tau\;\;\textrm{ for all }t\in(0,T),\qquad z(0)<y_0.
\]
Then $z(t)\leq y(t)$ for all $t\in(0,T)$.
\end{lemma}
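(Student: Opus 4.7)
The plan is a standard continuity/comparison argument adapted to an integral inequality with a one-sided bound. Set $w(t):=z(t)-y(t)$ and define
\[
 T^*:=\sup\setl{t\in[0,T):\,w(s)\leq 0\text{ for all }s\in[0,t]}.
\]
Since $w(0)=z(0)-y_0<0$ and $w$ is continuous, a neighborhood of $0$ belongs to the indicated set, so $T^*>0$. The goal is to show $T^*=T$. Assume for contradiction that $T^*<T$. By continuity, $w(T^*)\leq 0$; and if $w(T^*)<0$, another continuity argument would yield $w<0$ on a right neighborhood of $T^*$, contradicting the maximality of $T^*$. Hence $w(T^*)=0$.

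Subtracting the integral equality for $y$ from the integral inequality for $z$ and splitting at $T^*$ gives, for any $t\in[T^*,T)$,
\[
 w(t) \;\leq\; (z(0)-y_0) + \intn{T^*}\bigl(f(z(\tau))-f(y(\tau))\bigr)d\tau + \int_{T^*}^t\bigl(f(z(\tau))-f(y(\tau))\bigr)d\tau.
\]
The first summand is strictly negative and the second is $\leq 0$ because $z(\tau)\leq y(\tau)$ on $[0,T^*]$ and $f$ is nondecreasing. So
\[
 w(t)\;\leq\;\int_{T^*}^t\bigl(f(z(\tau))-f(y(\tau))\bigr)d\tau\qquad\text{for }t\in[T^*,T).
\]

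Now fix $\delta>0$ small enough that $y$, and hence (by continuity of $z$) also $z$, remain in a fixed compact subset of the domain of $f$ on $[T^*,T^*+\delta]$; let $L$ denote the Lipschitz constant of $f$ there. Using monotonicity, $f(z(\tau))-f(y(\tau))\leq 0$ whenever $w(\tau)\leq 0$, while for $w(\tau)>0$ one has $f(z(\tau))-f(y(\tau))\leq L\,w(\tau)$; in either case $f(z(\tau))-f(y(\tau))\leq L\,w(\tau)^+$. Therefore, for $t\in[T^*,T^*+\delta]$,
\[
 w(t)^+\;\leq\;L\int_{T^*}^t w(\tau)^+\,d\tau,
\]
and Gronwall's inequality forces $w(t)^+\equiv 0$, i.e.\ $z\leq y$ on $[T^*,T^*+\delta]$. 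This contradicts the definition of $T^*$, so $T^*=T$ and the claim follows.

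The only delicate point is keeping track of the strict inequality at $t=0$: it is essential to ensure $w(T^*)=0$ (rather than $w(T^*)>0$) and to absorb the $\int_0^{T^*}$-contribution into a nonpositive term, so that one is left with a clean integral inequality on $[T^*,t]$ to which Gronwall applies. The local Lipschitz assumption on $f$ is used only on the compact interval $[T^*,T^*+\delta]$, which is exactly where it is available.
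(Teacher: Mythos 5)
Your proof is correct, but it takes a genuinely different and somewhat heavier route than the paper. Both arguments start from the first time $T^*$ (resp.\ $T_0$) at which the comparison could fail and observe that $z(T^*)=y(T^*)$ there. The paper then stops: it simply writes
\[
 z(0)+\intn{T_0} f(z)\;\geq\; z(T_0)=y(T_0)=y_0+\intn{T_0} f(y)\;\geq\; y_0+\intn{T_0} f(z)\;>\;z(0)+\intn{T_0} f(z),
\]
where the middle inequality uses monotonicity of $f$ together with $z\leq y$ on $[0,T_0]$ and the last uses $z(0)<y_0$. This is a one-line contradiction ($A>A$); the local Lipschitz hypothesis never enters the comparison argument at all (it is only there so that the ODE solution $y$ exists and is unique). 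Your proof, by contrast, invests the strictness $z(0)<y_0$ only to guarantee $w(T^*)=0$ and to kill the $\int_0^{T^*}$-contribution, and then relies on the Lipschitz constant $L$ and Gronwall on $[T^*,T^*+\delta]$ to conclude $w^+\equiv 0$ there. That is valid — the $w^+$ trick and the restriction to a compact time window where the Lipschitz bound applies are handled correctly — but it is more machinery than needed: the paper's observation that the strict initial gap already blocks the crossing makes Gronwall (and indeed any quantitative use of Lipschitz continuity) unnecessary. Your approach would be the natural one if the hypothesis were $z(0)\leq y_0$ instead of $z(0)<y_0$, where the strictness trick fails; under the actual hypothesis, the paper's argument is the sharper tool.
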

\begin{proof}
 Let $T_0:=\inf\setl{t\in(0,T): z(t)>y(t)}$ and assume that $T_0<T$ exists. Due to continuity, $z(T_0)=y(T_0)$, i.e. 
\begin{align*}
 z(0)&+\intn{T_0} f(z(\tau))d\tau\geq z(T_0)=y(T_0)=y(0)+\intn{T_0}y'(\tau)d\tau\\
 &=y_0+\intn{T_0}f(y(\tau))d\tau
\geq y_0+\intn{T_0}f(z(\tau))d\tau
 >z(0)+\intn{T_0} f(z(\tau))d\tau,
\end{align*}
which is contradictory.
\end{proof}

The next lemma prepares the ground for the approximation procedure to be carried out in Theorem \ref{thm:existence}. It guarantees that solutions to \eqref{eq:epsprob} exist ``long enough''. Its proof is an adaption of that of \cite[Lemma 4.5]{winkler_14_ctexceed}, where an assertion similar to our Lemma \ref{thm:existence} is shown.

\begin{theorem}
 \label{thm:commonexistencetime}
 Let $\kappa\geq 0, \my>0, q>n$.
 Then for any $D>0$ there are some numbers $T(D)>0$ and $M(D)>0$ such that for any radially symmetric nonnegative and compatible $u_0\in\Weqom$ with $\norm[\Weqom]{u_0}\leq D$, for any $\eps>0$ the classical solution $(u_\eps,v_\eps)$ of \eqref{eq:epsprob} exists on $\Om\times(0,T(D))$ and $\norm[\Li(\Om\times(0,T(D))]{u_\eps}\leq M(D)$.
\end{theorem}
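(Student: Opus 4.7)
The plan is to combine the local existence of Lemma \ref{thm:epslocalexistence} with the $\eps$-uniform estimate of Corollary \ref{thm:theestimate} and a bootstrap argument based on the Sobolev embedding $\Weqom\embeddedinto\Liom$, which is valid since $q>n$. Denote by $C_S=C_S(\Om,q)$ the embedding constant, so $\norm[\Liom]{w}\leq C_S\norm[\Weqom]{w}$ for every $w\in\Weqom$. Since any compatible $u_0$ lies in $\Combar$, Lemma \ref{thm:epslocalexistence} supplies, for each $\eps>0$, a maximal existence time $\Tmax(\eps)\in(0,\infty]$ together with the usual $\Liom$-blow-up dichotomy.

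Fix $D>0$, set $M:=2C_SD$, and define
\[
 T_\eps^* := \sup\setl{t\in(0,\Tmax(\eps))\col \norm[\Li(\Om\times(0,t))]{u_\eps}\leq M}.
\]
Since $\norm[\Liom]{u_0}\leq C_SD=M/2<M$, continuity of $u_\eps$ gives $T_\eps^*>0$. On $[0,T_\eps^*)$, Corollary \ref{thm:theestimate} together with the pointwise bound $\norm[\Liom]{u_\eps(\cdot,s)}\leq M$ yields
\[
 \intom|\na u_\eps(\cdot,t)|^q \leq \bigl(D^q+|\Om|tM^{1+q}\bigr)\exp\bigl((5qM+\kappa q)t\bigr),
\]
while Lemma \ref{thm:ddtulp} with $p=q$, combined with $\intom u_\eps^{q+1}\leq M\intom u_\eps^q$ and Gronwall, gives $\intom u_\eps^q(\cdot,t)\leq D^q\exp((q\kappa+(q-1)M)t)$. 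Letting $F(t)$ denote the sum of these bounds, $F$ is continuous and depends only on $D,\kappa,\my,q,|\Om|$, satisfies $F(0)\leq D^q$, and
\[
 \norm[\Weqom]{u_\eps(\cdot,t)}^q\leq F(t)\qquad\text{on }[0,T_\eps^*).
\]
Now choose $T(D)>0$ so small that $C_SF(t)^{1/q}<M$ on $[0,T(D)]$, which is possible since $C_SF(0)^{1/q}\leq M/2$ and $F$ is continuous. By Sobolev, the strict inequality $\norm[\Liom]{u_\eps(\cdot,t)}<M$ then persists throughout $[0,T_\eps^*\wedge T(D)]$; the definition of $T_\eps^*$ together with the blow-up criterion of Lemma \ref{thm:epslocalexistence} thereby rule out both $T_\eps^*\leq T(D)$ and $\Tmax(\eps)\leq T(D)$, establishing the assertion with $M(D):=M$.

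The decisive obstacle is $\eps$-uniformity: any parabolic-regularity bound that exploits the dissipation $\eps\Lap u_\eps$ degenerates as $\eps\downto 0$, so nothing of the standard Moser-type iteration machinery is available to us here. What makes the argument close is precisely Corollary \ref{thm:theestimate}---itself resting on the radial identity of Lemma \ref{thm:radialestimate}---which delivers control of $\na u_\eps$ in $L^q$ that is entirely $\eps$-free; once this is in hand, the bootstrap above proceeds in a standard way and produces $T(D)$ and $M(D)=2C_SD$ depending only on $D$ (through $q$, $n$, $\kappa$, $\my$, and $|\Om|$), as required.
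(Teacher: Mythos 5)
Your proof is correct in substance but takes a genuinely different route to closing the estimate. You run a stopping-time bootstrap: with $M:=2C_SD$ you define $T_\eps^*$ as the first time the bound $\norm[\Liom]{u_\eps}\leq M$ could fail, show (via Corollary~\ref{thm:theestimate} and Lemma~\ref{thm:ddtulp} with $p=q$) an $\eps$-independent $\Weqom$-bound $F(t)$ on $[0,T_\eps^*)$, and conclude via $\Weqom\embeddedinto\Liom$ that the $L^\infty$-bound strictly self-improves, thereby ruling out $T_\eps^*\leq T(D)$ and $\Tmax(\eps)\leq T(D)$ once $T(D)$ is small enough. The paper instead dispenses with the continuity/stopping-time device entirely: it feeds the split inequality $\norm[\Liom]{\psi}\leq c_1\norm[\Lqom]{\na\psi}+c_1\norm[\Leom]{\psi}$ together with the unconditional $\eps$-free $L^1$-bound from Corollary~\ref{thm:ulpbounded} at $p=1$ directly into Lemma~\ref{thm:estimate}, producing a closed integral inequality in $z(t)=\intom\Phi_\eta(|\na u_\eps(\cdot,t)|)$ alone; this is then controlled by the nonlinear Gronwall comparison of Lemma~\ref{thm:owngronwall} against an explicit ODE solution $y_D$. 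Your route pays for using the plain Sobolev embedding by needing a separate $L^q$-estimate on $u_\eps$ from Lemma~\ref{thm:ddtulp} (which itself rests on the bootstrap hypothesis $u_\eps\leq M$), while the paper's split inequality lets it get away with the $L^1$-bound, which holds with no smallness assumption at all; conversely you bypass the comparison lemma~\ref{thm:owngronwall} and work directly with $|\na u|^q$ rather than with $\Phi_\eta$, which is tidier once $u_0$ is compatible.

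One arithmetic slip: with $F(t)$ the sum of your two bounds, $F(0)=2D^q$, not $\leq D^q$, so $C_SF(0)^{1/q}=2^{1/q}C_SD>M/2$. Nothing breaks --- since $2^{1/q}<2$ one still has $C_SF(0)^{1/q}<M$ strictly, and continuity of $F$ still furnishes a positive $T(D)$ --- but the justification ``since $C_SF(0)^{1/q}\leq M/2$'' should read ``since $C_SF(0)^{1/q}<M$''.
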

\begin{proof}
 For any $\eps>0$, the classical solution $(u_\eps,v_\eps)$ of \eqref{eq:epsprob} exists on some interval $(0,\Tmax^\eps)$ and satisfies $\limsup_{t\upto \Tmax^\eps} \norm[\Liom]{u_\eps}=\infty$, unless $\Tmax^\eps=\infty$.
 It is therefore sufficient to show boundedness of $\norm[\Liom]{u_\eps}$ on $(0,T(D))$ for some $\eps$-independent $T(D)>0$.
  Fix constants $c_1,c_2$ such that for all $\psi\in\Weqom$
\begin{equation}
 \label{eq:constantsforexistencetime}
 \norm[\Liom]\psi \leq c_1\norm[\Lqom]{\na\psi}+c_1\norm[\Leom]\psi \qquad \mbox{ and } \qquad \norm[\Leom]{\psi}\leq c_2\norm[\Weqom]{\psi},
\end{equation}
%\norm[Liom]{\psi}\leq\norm[\liom]{\psi-\psibar}+\sup \frac1{|\Om|} \norm[\Leom]{\psi}\\%q>n
%\leq \norm[\Weqom]{\psi-\psibar}+C\norm[\Leom]{\psi}\\
%\leq C \norm[\Lqom]{\psi-\psibar}+C\norm[\Lqom]{\na \psi-\underbrace{\na\psibar}_{=0}}+C\norm[\Leom]{\psi}\\%Poincare
%\leq C \norm{\Lqom}{\na\psi} +C\norm[\Lqom]{\na \psi}+C\norm[\Leom]{\psi}
% and 
% \[
%  \norm[\Leom]{\psi}\leq c_2\norm[\Weqom]{\psi}%\qquad \forall \psi\in\Weqom
% \]
where we use $q>n$, as well as $K$ as in Lemma \ref{thm:estimate} and $c_3=c_3(D)$ such that 
\[
 \frac{c_3}{c_1}=\max\set{c_2D,\frac{\kappa|\Om|}\my},
\]
so that by Corollary \ref{thm:ulpbounded} applied to $p=1$, 
\begin{equation}
\label{eq:integralexistencetime}
 \intom u\leq \frac{c_3}{c_1}. %\int u\leq \int u_0 max \frac \kappa\my |\Om| 
\end{equation}
Let furthermore denote $y_D$ the solution to 
\[
 y_D'(t)=(6q c_1+K|\Om|(2c_1)^{1+q})y_D^{1+\frac1q}+(6q c_3+\kappa q)y_D+|\Om|K(2c_3)^{1+q}+1), y_D(0)=(\sqrt{2}D)^q+1
\]
and denote by $T(D)>0$ a number, such that $y_D(t)\leq (\sqrt{2}D)^q+2$ for all $t\in(0,T(D))$.%ex because of loc. wellposedness and continuity.
 
 Now, let $u_0\in\Weqom$ be as specified in the lemma, especially with $\norm[\Weqom]{u_0}\leq D$. For $\eps>0$ denote by $u_\eps$ the solution of the corresponding equation \eqref{eq:epsprob}.

We apply Lemma \ref{thm:estimate} for conveniently small $\eta\in(0,\min\set{q,\frac12|\Om|^{-\frac2q},\frac{q}{2c_3}})$ and arbitrary $t\in(0,T(D))$ and obtain, in the limit $\tau\downto 0$ due to compatibility of $u_0$
\begin{align*}
  \int_{\Om} \Phi_\eta(|\na u_\eps(\cdot,t)|)\leq& \int_{\Om} \Phi_\eta(|\na u_0|)
+ \intnt\left(\left(6q \norm[\Liom]{u_\eps}+\kappa q\right)\int_{\Om} \Phi_\eta(|\na u_\eps|)\right)\\
&\qquad\qquad\qquad\qquad\qquad\qquad\qquad\qquad\qquad+|\Om|\intnt\left(K\norm[\Liom]{u_\eps}^{1+q}+\frac {2\eta}q \norm[\Liom]{u_\eps}\right)\\
 \leq& \intom \Phi_\eta(|\na u_0|)+\intnt \left((6q(c_1\norm[\Lqom]{\na u_\eps}+c_3)+\kappa q)\intom \Phi_\eta(|\na u_\eps|)\right)\\
&\qquad\qquad\qquad\qquad+|\Om|\intnt \left(K(c_1\norm[\Lqom]{\na u_\eps}+c_3)^{1+q}+\frac{2\eta}q(c_1\norm[\Lqom]{\na u_\eps}+c_3)\right)
\end{align*}
Here we abbreviate $\intom \Phi_\eta(\na u_\eps(\cdot,t)=:z(t)$ and estimate $\norm[\Lqom]{\na u_\eps(t)}\leq z^{\frac1q}(t)$. Then
\begin{align*}
 z(t)&\leq z(0)+\intnt \left((6q c_1+K|\Om|(2c_1)^{1+q})z^{1+\frac1q}(s)+(6q c_3+\kappa q)z(s)+ |\Om|(K(2c_3)^{1+q}+\frac {2\eta}{q}c_3)\right)ds.
\end{align*}
Additionally 
\begin{align*}
 z(0)=\intom \Phi_\eta(\na u_{0})&\leq 2^{\frac q 2}\intom |\na u_0|^q+(2\eta)^{\frac q2} |\Om|\leq (\sqrt{2}D)^q+1.
\end{align*}

Lemma \ref{thm:owngronwall} therefore leads us to the conclusion that, for all $t\in (0,T(D))$ and independent of $\eta$,  
\(
 \int \Phi_\eta(\na u_\eps(t))\leq y_D(t),
\)
which by Fatou's lemma implies 
\[\intom |\na u_\eps(t)|^q\leq 2D+2\]
 for all $t\in (0,T(D))$. 
Along with \eqref{eq:constantsforexistencetime} and \eqref{eq:integralexistencetime}, this shows that for all $\eps>0$ 
\[
 \norm[\Liom]{u_\eps(\cdot,t)}\leq c_1((\sqrt{2}D)^q+2)^\frac1q+c_3=:M(D)
\]
on $(0,T(D))$.
\end{proof}

\subsection{Preparations for convergence: boundedness of $u_t$ in an appropriate space}
\label{sec:utbd}

In order to use the Aubin-Lions-type estimate of Lemma \ref{thm:aubinlions}, we will need at least some regularity of the time derivative of bounded solutions. 
\begin{lemma}
 \label{thm:timederivative}
  Let $\eps_0>0$, let $\my>0, T>0, q>n, p\in(1,\infty), M>0$. 
 Let $u_{0\eps_0}\in\Weqom$ be compatible and nonnegative.
  Then there is $C>0$ such that the following holds for $\eps\in(0,\eps_0)$:
  If a solution $u_\eps$ of \eqref{eq:epsprob} in $Q_T$ with compatible nonnegative radial initial data $u_{0\eps}\in\Weqom$, $\norm[\Weqom]{u_{0\eps}-u_{0\eps_0}}$, satisfies 
  \[
   |u_\eps(x,t)|<M
  \]
  for all $(x,t)\in\Om\times[0,T)$, then 
  \[
   \norm[\LpnTX{(W^{1,\frac q{q-1}}(\Om))^*}]{u_{\eps t}}\leq C.
  \]
\end{lemma}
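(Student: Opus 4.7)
The plan is to test the first equation of \eqref{eq:epsprob} against an arbitrary $\varphi\in W^{1,\frac q{q-1}}(\Om)$ and estimate each of the resulting four terms separately by the $W^{1,\frac q{q-1}}$-norm of $\varphi$. More precisely, multiplying by $\varphi$ and integrating by parts (using the Neumann boundary conditions) yields
\[
 \int_\Om u_{\eps t}\,\varphi = -\eps\int_\Om \na u_\eps\cdot\na \varphi + \int_\Om u_\eps\na v_\eps\cdot\na\varphi + \kappa\int_\Om u_\eps\varphi - \my\int_\Om u_\eps^2\varphi,
\]
and it suffices to bound each term on the right, uniformly in $\eps\in(0,\eps_0)$ and $t\in(0,T)$, by a constant times $\norm[W^{1,\frac q{q-1}}(\Om)]{\varphi}$.

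For the zeroth-order terms, H\"older's inequality and the assumed pointwise bound $u_\eps<M$ immediately give control by $M(\kappa+\my M)|\Om|^{\frac{q-1}{q}}\norm[L^{\frac q{q-1}}(\Om)]{\varphi}$. For the chemotaxis term,
\[
 \left|\int_\Om u_\eps\na v_\eps\cdot\na\varphi\right|\leq M\,\norm[\Lqom]{\na v_\eps}\,\norm[L^{\frac q{q-1}}(\Om)]{\na\varphi},
\]
and Lemma \ref{thm:elliptregularity} applied to \eqref{eq:v} together with $\norm[\Lqom]{u_\eps}\leq M|\Om|^{\frac1q}$ bounds $\norm[\Lqom]{\na v_\eps}$ uniformly.

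The slightly more delicate term is the diffusion term, which is estimated as
\[
 \left|\eps\int_\Om \na u_\eps\cdot\na\varphi\right|\leq \eps_0\,\norm[\Lqom]{\na u_\eps}\,\norm[L^{\frac q{q-1}}(\Om)]{\na\varphi}.
\]
Here one has to control $\norm[\Lqom]{\na u_\eps}$ uniformly in $\eps$ and $t$. This is precisely the kind of estimate furnished by Corollary \ref{thm:theestimate}: since $u_{0\eps}$ is compatible with $\norm[\Weqom]{u_{0\eps}}$ bounded uniformly (by a constant depending only on $u_{0\eps_0}$), and $\norm[\Liom]{u_\eps(\cdot,s)}\leq M$ on $(0,T)$, inserting these bounds into the right-hand side of Corollary \ref{thm:theestimate} produces a constant $C_1>0$, independent of $\eps\in(0,\eps_0)$ and $t\in(0,T)$, with $\norm[\Lqom]{\na u_\eps(\cdot,t)}\leq C_1$.

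Collecting these four bounds gives a constant $C_2>0$ independent of $\eps$ such that $\norm[(W^{1,\frac q{q-1}}(\Om))^*]{u_{\eps t}(\cdot,t)}\leq C_2$ for all $t\in(0,T)$, and in particular
\[
 \norm[\LpnTX{(W^{1,\frac q{q-1}}(\Om))^*}]{u_{\eps t}}\leq C_2\, T^{\frac1p},
\]
as claimed. The main (mild) obstacle is controlling $\norm[\Lqom]{\na u_\eps}$ uniformly in $\eps$, which is supplied by the previously established Corollary \ref{thm:theestimate}; everything else is H\"older, elliptic regularity, and the pointwise bound on $u_\eps$.
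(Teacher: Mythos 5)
Your proposal is correct and follows essentially the same route as the paper: test against $\psi\in W^{1,\frac{q}{q-1}}(\Om)$, handle the zeroth-order terms with the pointwise bound on $u_\eps$ and H\"older, the chemotaxis term with elliptic regularity (Lemma \ref{thm:elliptregularity}), and the diffusion term with the uniform-in-$\eps$ gradient bound from Corollary \ref{thm:theestimate}. The paper's proof combines these into one line of estimates with explicit constants $\Mtilde,\Ctilde,\Chat$, but the decomposition and the ingredients are identical to yours.
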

\begin{proof}
 Let $\psi\in \Ceombar$. Multiply \eqref{eq:epsprob} by $\psi$ and integrate over $\Om$:
 \begin{align*}
   \left|\intom u_{\eps t}\psi\right|&=\left|\intom \eps \Lap u_\eps \psi -\na(u_\eps\na v_\eps)\psi + \kappa u_\eps\psi -\my u_\eps^2\psi\right|\\
   &\leq \eps\left|\intom \na u \na \psi\right|+\left|\intom u_\eps \na v\eps\na \psi\right|+\kappa\norm[\Liom]{u_\eps}\left|\int \psi\right| +\my \norm[\Liom]{u_\eps}^2\left|\int \psi\right|.
 \end{align*}
 Invoking Corollary \ref{thm:theestimate}, Lemma \ref{thm:elliptregularity} and H\"older's inequality, we infer the existence of constants with $\norm[\Lqom]{\na u}\leq\Mtilde$, $\norm[\Lqom]{v}\leq \Ctilde\norm[\infty]{u}\leq \Ctilde M$ and $\norm[\Leom]{\psi}\leq\Chat\norm[L^\frac{q}{q-1}]{\psi}$ and conclude for $C:=T^{\frac1p}\max\set{\eps_0\Mtilde+\Ctilde M^2, (\kappa M+\my M^2)\Chat}$
 \begin{align*}
  \left|\intom u_{\eps t}\psi\right|&\leq \eps\norm[\Lqom]{\na u}\norm[L^{\frac q{q-1}}(\Om)]{\na \psi}+M\norm[\Lqom]{\na v_\eps}\norm[L^\frac{q}{q-1}]{\na \psi} +\kappa M \norm[\Leom]{\psi}+\my M^2\norm[\Leom]{\psi}\\
  &\leq 
  \left(\eps \Mtilde + \Ctilde M^2\right)\norm[L^\frac{q}{q-1}]{\na \psi}+(\kappa M+\my M^2)\Chat \norm[L^{\frac q{q-1}}]{\psi}\\
  &\leq C T^{-\frac1p} \norm[ W^{1,\frac q{q-1}}(\Om)]{\psi}.
 \end{align*}
Here taking the supremum over $\psi$ with $\norm[W^{1,\frac{q}{q-1}}(\Om)]{\psi}=1$ reveals 
\[
 \norm[(W^{1,\frac{q}{q-1}}(\Om))^*]{u_{\eps t}} \leq C T^{-\frac1p}
\]
and hence 
\[
 \left(\intnT\norm[(W^{1,\frac{q}{q-1}}(\Om))^*]{u_{\eps t}}^p\right)^\frac1p \leq C.\qedhere
\]
\end{proof}

%Theestimate, zum Abgleichen:
% \int_{\Om} |\na u(\cdot,t)|^q \leq \left(\int_{\Om} |\na u_0|^q +|\Om| \intnt\supnorm{u}^{1+q}\right)\exp\left( \intnt(5q\supnorm{u}) +\kappa q t \right).

\section{Hyperbolic-elliptic case}
\label{sec:limprob}
\subsection{What is a solution?}
We want to name a function ``solution'' if it is a solution in a sense similar to that in \cite[Def. 4.1]{winkler_14_ctexceed}:
\begin{defn}\label{def:soln}Let $T\in(0,\infty]$. A strong \weqsoln\ of \eqref{eq:limprob} in $Q_T=\Om\times(0,T)$ is a pair of functions $u\in C(\Ombar\times[0,T))\cap\LilocnTWeqom$ and $v\in C^{2,0}(\Ombar\times[0,T))$ such that $u,v$ nonnegative, $v$ classically solves $0=\Lap v-v+u$ in $\Om$, $\delny v\amrand=0$ and 
\begin{equation}
 \label{eq:defsoln}
 -\intnT\intom u\phii_t - \intom u_0\phii(\cdot,0) = \intnT\intom u\na v \na \phii+\kappa\intnT\intom u\phii-\my\intnT\intom u^2\phii
\end{equation}
 holds true for all $\phii\in L^1((0,T);\Weeom)$ that have compact support in $\Ombar\times [0,T)$ and satisfy $\phii_t\in\LeomnT$.
 If additionally $T=\infty$, we call the solution global.
\end{defn}
\begin{remark}
 Due to density arguments, it is of course possible to formulate Definition \ref{def:soln} for $\phii\in\Cninf(\Ombar\times [0,T))$ and obtain the same solutions.
\end{remark}

\subsection{Uniqueness}
These solutions are unique as can be proven very similar to the one-dimensional case.
\begin{lemma}
 \label{thm:unique}
 Let $q>n$, $T\in(0,\infty]$ and $u_0\in\Weqom$ with $u_0\geq 0$. Then the \weqsoln\ of \eqref{eq:limprob} in $Q_T$ is unique.%if it exists
\end{lemma}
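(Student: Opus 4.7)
The plan is to follow the strategy used by Winkler for the one-dimensional case (\cite[Lemma 4.2]{winkler_14_ctexceed}) and to identify and treat the extra difficulty that surfaces in higher dimensions. Given two \weqsoln s $(u_1,v_1)$, $(u_2,v_2)$ of \eqref{eq:limprob} sharing the initial datum $u_0$, I would set $w:=u_1-u_2$, $z:=v_1-v_2$, so that $z$ classically solves $-\Lap z+z=w$ with $\delny z\amrand=0$ and, by virtue of the identities $u_1\na v_1-u_2\na v_2=w\na v_1+u_2\na z$ and $u_1^2-u_2^2=(u_1+u_2)w$, the difference of the two weak formulations becomes
\[
 -\intnT\intom w\phii_t = \intnT\intom (w\na v_1+u_2\na z)\cdot\na\phii + \kappa\intnT\intom w\phii - \my\intnT\intom (u_1+u_2)w\phii
\]
for all admissible $\phii$. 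Because $q>n$, Sobolev embedding keeps $u_i,w$ locally-in-time bounded in $\Liom$, Lemma \ref{thm:elliptregularity} together with Lemma \ref{thm:vlpleu} does the same for $v_i,\na v_i,z,\na z$, and the right-hand side above shows that $w_t$ lies in the dual of $L^p_{loc}([0,T);W^{1,q/(q-1)}(\Om))$. Standard parabolic duality then yields $w\in C([0,T);\Lzom)$ together with
\[
 \tfrac12\ddt\intom w^2 = \intom (w\na v_1+u_2\na z)\cdot\na w + \kappa\intom w^2 -\my\intom(u_1+u_2)w^2
\]
in the distributional sense on $(0,T)$.

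The next step is to integrate by parts wherever $\na w$ appears. Using $\Lap v_1=v_1-u_1$, the drift term gives $\intom w\na v_1\cdot\na w=-\tfrac12\intom w^2(v_1-u_1)$. In the cross term $\intom u_2\na z\cdot\na w$ I would substitute $u_2=u_1-w$; the $-w\na z\cdot\na w$ piece becomes $\tfrac12\intom w^2\Lap z=\tfrac12\intom w^2(z-w)$, and $\intom u_1\na z\cdot\na w$ collapses (after one more integration by parts exploiting $\delny z\amrand=0$ and the elliptic equation) to $-\intom (\na u_1\cdot\na z)w-\intom u_1(z-w)w$. Collecting everything with the absorption and growth terms, all contributions except
\[
 I:=\intom (\na u_1\cdot\na z)w
\]
reduce either to multiples of $\intom w^2$ or to integrals of the form $\intom z\cdot w\cdot(\text{bounded})$, and the latter are dominated by $C\norm[\Lzom]{w}^2$ thanks to the $\Liom$-bounds on $u_i,v_i,w$ and to $\norm[\Lzom]{z}\leq\norm[\Lzom]{w}$ (Lemma \ref{thm:vlpleu}).

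The remaining term $I$ is the one genuine new difficulty in the higher-dimensional setting, on which I expect the main work to concentrate. I plan to combine $\na u_1\in\Lqom$ with the elliptic bound $\norm[\Liom]{\na z}\leq C\norm[\Lqom]{w}$ coming from Lemma \ref{thm:elliptregularity} and the embedding $W^{1,q}\embeddedinto\Liom$ (which is exactly where the hypothesis $q>n$ enters crucially), together with the H\"older interpolations
\[
 \norm[L^{q/(q-1)}(\Om)]{w}\leq \norm[\Liom]{w}^{1-\frac{2}{q'}}\norm[\Lzom]{w}^{\frac{2}{q'}}, \qquad \norm[\Lqom]{w}\leq \norm[\Liom]{w}^{1-\frac{2}{q}}\norm[\Lzom]{w}^{\frac{2}{q}},
\]
so that, since $\tfrac1q+\tfrac1{q'}=1$, H\"older's inequality applied to $I$ yields $|I|\leq C\norm[\Liom]{w}^{2-\frac{2}{q}-\frac{2}{q'}}\norm[\Lzom]{w}^{\frac{2}{q}+\frac{2}{q'}}=C\norm[\Lzom]{w}^2$ with $C$ depending only on the local-in-time $\Liom$-norms of $u_1,u_2$ and the $\Lqom$-norm of $\na u_1$. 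Altogether I expect to arrive at a differential inequality of the form $\ddt\intom w^2\leq C(t)\intom w^2$ on $(0,T)$ with $C\in L^\infty_{loc}([0,T))$; since $w(\cdot,0)=0$, Gronwall's inequality forces $w\equiv 0$ on $Q_T$ and the elliptic equation then gives $v_1=v_2$ as well.
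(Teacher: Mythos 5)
Your overall setup mirrors the paper's: take the difference $w=u_1-u_2$, $z=v_1-v_2$, exploit the elliptic equation for $z$, and push toward a Gronwall argument. However, the place you yourself flag as the ``one genuine new difficulty'' is exactly where the argument breaks.

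The first of the two interpolation inequalities you invoke,
\[
 \norm[L^{q/(q-1)}(\Om)]{w}\leq \norm[\Liom]{w}^{1-\frac{2}{q'}}\norm[\Lzom]{w}^{\frac{2}{q'}},\qquad q'=\tfrac{q}{q-1},
\]
is false. Since $q>n\ge 2$ forces $q>2$ and hence $q'<2$, the exponent $1-\tfrac{2}{q'}$ is negative, and $L^{q'}$ does not lie between $L^2$ and $L^\infty$; one easily checks (e.g.\ with $w$ taking two values on sets of equal measure) that the inequality fails in that regime. Without it, your estimate on $I=\intom(\na u_1\cdot\na z)w$ only yields $|I|\le C\norm[\Lqom]{w}\,\norm[L^{q'}(\Om)]{w}$, and H\"older's inequality applied the other way shows $\norm[\Lqom]{w}\,\norm[L^{q'}(\Om)]{w}\ge \norm[\Lzom]{w}^2$ — i.e.\ the quantity you produced sits on the \emph{wrong} side of the $L^2$-energy. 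Replacing the false interpolation by $\norm[L^{q'}]{w}\le C\norm[L^2]{w}$ and $\norm[L^q]{w}\le\norm[L^\infty]{w}^{1-2/q}\norm[L^2]{w}^{2/q}$ gives at best $|I|\le C\norm[L^2]{w}^{1+2/q}$, a subquadratic power, leading to an ODE inequality $y'\le Cy+C'y^{1/2+1/q}$ that does not force $y\equiv 0$ from $y(0)=0$. In short, the $L^2$-estimate does not close.

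This is precisely why the paper tests with a regularised version of $w|w|^{q-2}$ and closes the estimate in $L^q$: there the critical term is $-\intntn\intom\na\utilde\cdot\na z\;w|w|^{q-2}$, which is bounded by $\norm[\Lqom]{\na\utilde}\,\norm[\Liom]{\na z}\,\norm[L^{q'}(\Om)]{w|w|^{q-2}}\le c_3c_4\,\norm[\Lqom]{w}\cdot\norm[\Lqom]{w}^{q-1}=C\norm[\Lqom]{w}^q$, matching the quantity one is Gronwalling. A secondary issue: your appeal to ``standard parabolic duality'' to justify $\tfrac12\ddt\intom w^2=\langle w_t,w\rangle$ is not available here — the first equation of \eqref{eq:limprob} has no diffusion, so there is no Gelfand-triple structure; the paper handles the corresponding step by hand via a Steklov-type time average and a piecewise-linear cutoff $\chi_\delta$. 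But the decisive obstruction is the failed interpolation: the $L^2$ framework cannot absorb the term containing $\na u_1\in L^q$, and switching to an $L^q$ energy is not optional.
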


\begin{proof} (Cf. \cite[Lemma 4.2]{winkler_14_ctexceed}).
Let $q>n$ and $u_0\in\Weqom$ with $u_0\geq 0$. Let $(u,v),(\utilde,\vtilde)$ be strong \weqsoln s of \eqref{eq:limprob} and note that $(w,z):=(u-\utilde,v-\vtilde)$ satisfy
\begin{equation}
 \label{eq:solndefintegral}
 -\intnT\intom w\phii_t=\intnT\intom (u\na v-\utilde\na \vtilde)\na \phii+ \kappa\intnT\intom w\phii-\my\intnT\intom(u^2-\utilde^2)\phii
\end{equation}
for all $\phii\in L^1((0,T);\Weeom)$ which have compact support in $\Ombar\times [0,T)$ and satisfy $\phii_t\in\LeQT$ 
and 
\begin{equation}
 \label{eq:differenzellipt}
 0=\Lap z+w-z.
\end{equation}
Let $T_0\in(0,T)$. Then by Definition \ref{def:soln} (and by Lemma \ref{thm:elliptregularity}), we can define constants such that 
\begin{align*}
 c_1:=\norm[\LiomnTn]{v},\quad c_2:=\norm[\LiomnTn]{u},\quad c_3=\norm[\Li((0,T_0),\Lqom]{\utilde},\quad c_5=\norm[\LiomnTn]{\utilde}
\end{align*}
and let $c_4$ denote the constant from Lemma \ref{thm:elliptregularity}.
We set 
\begin{equation}
 \label{eq:theuniquenessconst}
C:=q(c_1+c_2+\kappa+c_3c_4+c_5).
\end{equation}

By Lemma \ref{thm:elliptregularity}, \eqref{eq:differenzellipt} implies $\norm[\Wzqom]{z}\leq C\norm[\Lqom]{w}$ and hence, as for $q>n$ $\Weqom\embeddedinto\Liom$, 
\[
 \norm[\Liom]{\na z}\leq C\norm[\Lqom]{w}.
\]

In \eqref{eq:solndefintegral} we use some function $\phii$ we construct as follows:
For $t_0\in(0,T_0)$ define $\chi_\delta\in\WeiR$ by 
\begin{equation*}
 \chi_\delta(t):=\begin{cases}1,&t<t_0,\\
		  \frac{t_0-t+\delta}{\delta},&t\in[t_0,t_0+\delta],\\
                  0,&t>t_0+\delta,
                 \end{cases}
\end{equation*}
for $\delta\in(0,\frac{T_0-t_0}{2})$ and let
\[
 \phii(x,t):=\chi_\delta(t)\frac1h\inttth w(x,s)(w^2(x,s)+\eta)^{\frac q2-1} ds.
\]
Then for $\delta\in(0,\frac{T_0-t_0}{2}), h\in(0,\frac{T_0-t_0}{2}), 1>\eta>0$, $\phii$ is a valid test function in \eqref{eq:solndefintegral} and yields:
\begin{align*}
  \frac1\delta&\inttntnd\intom w(x,t)\frac1h\inttth w(x,s)(w^2(x,s)+\eta)^{\frac q2-1} dsdxdt \\
&\qquad\qquad\qquad\qquad-\intnT\intom\chi_\delta(t)w(x,t)\frac{w(x,t+h)(w^2(x,t+h)+\eta)^{\frac q2-1}-w(x,t)(w^2(x,t)+\eta)^{\frac q2-1}}h\\
  &= \intnT\intom\chi_\delta(t)[-\na u\na v -u\Lap v+\na\utilde\na \vtilde+\utilde\Lap \vtilde+\kappa w-\my w(u+\utilde)]
\cdot \frac1h \inttth w(x,s)(w^2(x,s)+\eta)^{\frac q2-1}\\
 &=\intnT\intom \chi_\delta(T)[-\na w\na v-w\Lap v - \na \utilde\na z-\utilde\Lap z+\kappa w-\my w(u+\utilde)]
\cdot\frac1h \inttth w(x,s)(w^2(x,s)+\eta)^{\frac q2-1}. 
\end{align*}
Taking the limit $\delta\downto 0$, which is possible for the first term because $(x,t)\mapsto w(x,t)\frac1h \inttth w(x,s)(w^2(x,s)+\eta)^{\frac q2-1}ds $ is continuous and on the right hand side by Lebesgue's theorem, since $\na z,\Lap z, \utilde, \na v, \Lap v, w, u,$ are bounded and $\na w$ is uniformly bounded in $\Lqom$ up to time $t_0+h$ (according to Definition \ref{def:soln}), we obtain 
\begin{align}
\label{eq:uqgetestet}
  &\intom w(x,t_0)\frac1h\inttntnh w(x,s)(w^2(x,s)+\eta)^{\frac q2-1} ds  dx \notag\\
&\qquad\quad\qquad\qquad-\intntn \intom w(x,t) \frac{w(x,t+h)(w^2(x,t+h)+\eta)^{\frac q2-1}-w(x,t)(w^2(x,t)+\eta)^{\frac q2-1}}h dxdt\\
 &=\intntn\intom(-\na w\na v -w\Lap v-\na \utilde\na z-\utilde\Lap z+\kappa w-\my w(u+\utilde)]
\cdot \frac1h\inttth w(x,s)(w^2(x,s)+\eta)^{\frac q2-1}dsdxdt.\notag
\end{align}
With the abbreviations $w=w(x,t)$, $w_h=w(x,t+h)$ we observe that 
\begin{align*}
 -\frac1h\intntn&\intom ww_h(w_h^2+\eta)^{\frac q2-1}+\frac1h\intntn\intom w^2(w^2+\eta)^{\frac q2-1}\\
 &\geq -\frac1h\intntn\intom(w^2+\eta)^{\frac 12}(w_h^2+\eta)^{\frac 1 2}(w_h^2+\eta)^{\frac q2-1}+\frac1h\intntn\intom w^2(w^2+\eta)^{\frac q2-1}\\
 &\geq -\frac1h\frac1q\intntn\intom(w^2+\eta)^{\frac q2}-\frac1h \frac{q-1}q\intntn\intom(w_h^2+\eta)^{\frac q2}\quad+\frac1h\intntn\intom w^2(w^2+\eta)^{\frac q2-1},
\end{align*}
where we have used that $s\leq (s^2+\eta)^\frac12$ and Young's inequality. Converting the time shift in the arguments to a change of integration limits, we obtain
\begin{align*}
 -&\frac1h \frac1q\intntn\intom w^2(v,t)(w^2+\eta)^{\frac q2-1}-\frac1h\eta\frac1q\intntn\intom(w^2+\eta)^{\frac q2-1}
 -\frac1h\frac{q-1}q\int_h^{t_0+h}\intom w^2(w^2+\eta)^{\frac q2-1}\\
&\qquad\qquad\qquad\qquad\qquad\qquad\qquad\qquad -\frac1h\frac{q-1}q\eta\int_h^{t_0+h}\intom(w^2+\eta)^{\frac q2-1}
 +\frac1h\intntn\intom w^2(w^2+\eta)^{\frac q2-1}\\
 &=\frac{q-1}{q}(\frac1h\intnh\intom w^2(w^2+\eta)^{\frac q2-1}-\frac1h\inttntnh\intom w^2(w^2+\eta)^{\frac q2-1})\\
 &\qquad\qquad\qquad\qquad\qquad\qquad\qquad\qquad-\eta\left(\frac1{hq}\intntn\intom(w^2+\eta)^{\frac q2-1}+\frac{q-1}{qh}\inthtnh\intom(w^2+\eta)^{\frac q2-1}\right),
\end{align*}
where in the limit $\eta\downto 0$ the last line vanishes. %by Lebesgue
Furthermore, by the continuity of $w$ and because $|w(w^2+\eta)^{\frac q2-1}|$ can be bounded by the integrable function $(w^2+1)^{\frac q2}$ 
\[
 \setl{\Om\ni x\mapsto \frac1h\inttntnh w(x,s)(w^2(x,s)+\eta)^{\frac q2-1} ds}\weakstarto \setl{x\mapsto \frac1h\inttntnh w(x,s)|w(x,s)|^{q-2}ds} \text { in }\Liom
\]
as well as 
\begin{align*}
 \setl{\Om\times(0,t_0)\ni (x,t)\mapsto \frac1h\inttth w(x,s)(w^2(x,s)+\eta)^{\frac q2-1}ds}
\weakstarto \setl{(x,t)\mapsto \frac1h\inttth w(x,s)|w(x,s)|^{q-2}ds }
\end{align*}
in $\Liomntn$ as $\eta\downto 0$.
Therefore we can conclude from \eqref{eq:uqgetestet} that 
\begin{align*}
 \intom w(x,t_0)\frac1h\inttntnh w(x,s)|w(x,s)|^{q-2} dsdx+\frac{q-1}{qh}\intnh\intom|w(x,t)|^q-\frac{q-1}{qh}\inttntnh\intom|w(x,t)|^qdxdt\\
\leq \intntn\intom(-\na w\na v -w\delta v-\na\utilde\na z-\utilde\Lap z+\kappa w-\my w(u+\utilde))\frac1h\inttth w(x,s)|w(x,s)|^{q-2}
\end{align*}
for $h\in(0,\frac{T_0-t_0}2)$. 
As $w$ is continuous on $\Ombar\times[0,T_0]$ and $w(\cdot,0)=0$ in $\Om$, $h\downto 0$ yields
\[
 \frac1q\intom|w(x,t_0)|^q\leq\intntn\intom(-\na w\na v -w\Delta v-\na\utilde\na z-\utilde\Lap z+\kappa w-\my w(u+\utilde)) w(x,t)|w(x,t)|^{q-2}.
\]
Here we will estimate the integral on the right hand side to obtain an expression that allows to conclude $w=0$ by means of Gronwall's lemma. 
We will consider the summands seperately:
\[
 -\intntn\intom\na w\na v w|w|^{q-2}=\frac1q \intntn\intom |w|^q \Lap v=\frac1q\intntn\intom|w|^q(v-u)\leq\norm[\LiomnTn]{v}\intntn\intom|w|^q
\]
Also for the next term, the second equation of \eqref{eq:epsprob} and nonnegativity of $v$ are helpful:
\[
 -\intntn\intom w\Lap v w|w|^{q-2}=-\intntn \intom |w|^q(v-u)\leq \norm[\LiomnTn]{u}\intntn\intom |w|^q.
\]
For the last we make use of the nonnegativity of both $u$ and $\utilde$:
\[
 \intntn\intom(\kappa w-\my w(u+\utilde))w|w|^{q-2}=\intntn\intom (\kappa-\my(u+\utilde))|w|^q\leq \kappa \intntn\intom|w|^q.
\]
Boundedness of $\utilde$ in $\Weqom$ and Lemma \ref{thm:elliptregularity} play the main role in the following estimate.
\[
 -\intntn\intom \na\utilde\na z w|w|^{q-2}\leq \intntn\norm[\Lqom]{\na\utilde}\norm[\Liom]{\na z}\norm[L^{\frac q{q-1}}(\Om)]{w|w|^{q-2}}
 \leq \intntn c_3c_4\norm[\Lqom]{w}\norm[\Lqom]{w}^{q-1}.
\]
And finally, once more employing the second equation,
\[
 -\intntn\intom \utilde\Lap z w|w|^{q-2}=-\intntn\intom \utilde zw|w|^{q-2}+\intntn\intom \utilde |w|^q\leq
 c_5\intntn\intom|w|^q.
\] 
Gathering all these estimates together we gain the constant $C$ from \eqref{eq:theuniquenessconst} such that for all $t_0\in(0,T_0)$
\[
  \intom|w(x,t_0)|^q dx\leq C\intntn\intom|w|^q,
\]
hence by Gronwall's lemma $w=0$ (and therefore also $z=0$), which proves uniqueness of solutions.
\end{proof}

\subsection{Local existence, approximation}
%TODO Verweise in den Beweis schieben
We will prove existence of solutions to \eqref{eq:limprob} by means of a compactness argument whose key lies in: 
\begin{lemma}
 \label{thm:aubinlions}
 Let $X,Y,Z$ be Banach spaces such that $X\embeddedinto Y\embeddedinto Z$, where the embedding $X\embeddedinto Y$ is compact. Then for any $T>0, p\in (1,\infty]$, the space 
\[
 \setl{w\in\LiNTX X;w_t\in\LpnTX Z}
\]
is compactly embedded into $C([0,T];Y)$.
\end{lemma}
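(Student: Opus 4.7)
The plan is to prove this by verifying the hypotheses of Arzelà–Ascoli in $C([0,T];Y)$: namely, uniform equicontinuity plus pointwise relative compactness for an arbitrary bounded sequence $(w_n)$ in the space
\[
 E:=\setl{w\in\LiNTX X;\; w_t\in\LpnTX Z}
\]
equipped with the natural norm $\norm{w}_E:=\norm[\LiNTX X]{w}+\norm[\LpnTX Z]{w_t}$. Let $M:=\sup_n\norm{w_n}_E<\infty$.

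For pointwise relative compactness, I would use that $\{w_n(t)\}_n$ is bounded in $X$ for (a.e.\ and, after modification, every) $t\in[0,T]$, so by the compact embedding $X\embeddedinto\embeddedinto Y$ the set $\{w_n(t)\}_n$ is relatively compact in $Y$. For equicontinuity, I would first work in the weaker space $Z$: since $w_{nt}\in L^p((0,T);Z)$ with $p>1$, the map $t\mapsto w_n(t)$ is absolutely continuous into $Z$ and
\[
 \norm[Z]{w_n(t)-w_n(s)}\leq \int_s^t\norm[Z]{w_{nt}(\tau)}d\tau\leq |t-s|^{1-\frac1p}\norm[\LpnTX Z]{w_{nt}}\leq M|t-s|^{1-\frac1p},
\]
with the obvious modification in the case $p=\infty$. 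This gives uniform Hölder continuity in $Z$, independent of $n$.

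To upgrade this to equicontinuity in $Y$, I would invoke Ehrling's lemma (which applies precisely because $X\embeddedinto\embeddedinto Y\embeddedinto Z$): for every $\eta>0$ there is $C_\eta>0$ with
\[
 \norm[Y]{\psi}\leq \eta\norm[X]{\psi}+C_\eta\norm[Z]{\psi}\qquad\textrm{for all }\psi\in X.
\]
Applying this to $\psi=w_n(t)-w_n(s)$ and combining with the two bounds above yields
\[
 \norm[Y]{w_n(t)-w_n(s)}\leq 2\eta M+C_\eta M|t-s|^{1-\frac1p},
\]
from which uniform equicontinuity in $Y$ follows: given $\eps>0$, first pick $\eta$ so that $2\eta M<\eps/2$, then choose $\delta$ so that $|t-s|<\delta$ implies $C_\eta M|t-s|^{1-1/p}<\eps/2$. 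Arzelà–Ascoli then furnishes a subsequence converging in $C([0,T];Y)$, proving compact embedding.

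The only genuinely delicate point is the passage from $w_{nt}\in L^p((0,T);Z)$ to the identity $w_n(t)-w_n(s)=\int_s^t w_{nt}(\tau)d\tau$ in $Z$; this requires interpreting $w_n$ as an absolutely continuous $Z$-valued function (which holds because $p>1$ implies $w_{nt}\in L^1((0,T);Z)$ and $w_n\in L^\infty((0,T);X)\subset L^\infty((0,T);Z)$, so $w_n$ can be identified with its continuous representative). Everything else is standard interpolation plus Arzelà–Ascoli, and the restriction $p>1$ enters only through the Hölder step granting a positive modulus exponent $1-1/p>0$.
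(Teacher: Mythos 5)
Your proof is correct and follows exactly the route the paper indicates: the paper defers to Winkler's Lemma 4.4, noting only that the argument uses Arzelà–Ascoli and Ehrling's lemma, and your reconstruction — pointwise relative compactness in $Y$ from the compact embedding $X\hookrightarrow\hookrightarrow Y$, Hölder equicontinuity in $Z$ from $w_t\in L^p((0,T);Z)$ with $p>1$, then interpolation via Ehrling to upgrade equicontinuity to $Y$ — is precisely that argument. The one point you gloss slightly, that the essential $X$-bound on $w_n(t)$ holds for \emph{every} $t$ after passing to the continuous ($Z$-valued) representative, does need the closed ball of $X$ to be closed under $Z$-limits (e.g.\ via weak-$*$ compactness when $X$ is reflexive or a dual space, as it is in the paper's application $X=W^{1,q}$), but this is a standard and easily repaired remark, not a gap.
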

\begin{proof}
 The proof uses the Arzel\'a-Ascoli theorem and Ehrling's lemma and can be found in \cite[Lemma 4.4]{winkler_14_ctexceed}.
\end{proof}

We directly take this tool to its use and employ it with a slightly different choice of spaces than in the one-dimensional case to obtain a similar result.
\begin{lemma}
(Cf. \cite[Lemma 4.3]{winkler_14_ctexceed})
\label{thm:kgztosolnifbd}
 Let $\kappa\geq 0, \my >0, q>n$, assume $u_0\in\Weqom$ nonnegative and radially symmetric.
  Suppose that $(\eps_j)_{j\nat}\subset (0,\infty)$, $(u_{0\eps_j})_j\subset\Weqom$, $T>0$, $M>0$ are such that $\eps_j\downto 0$ as $j\to \infty$, $u_{0\eps}$ compatible and radial %and nonneg
  and such that whenever $\eps\in(\eps_j)_{j\nat}$, for the solution $(u_\eps,v_\eps)$ of \eqref{eq:epsprob} with initial condition $u_{0\eps}$, we have 
  \begin{equation}
   \label{eq:bounded}
   u_\eps(x,t)\leq M.
  \end{equation}
 for all $(x,t)\in Q_T$. Then there exists a strong \weqsoln\ $(u,v)$ of \eqref{eq:limprob} in $Q_T$ such that 
 \begin{align*}
  u_\eps\to u \qquad&\textrm{ in } C(\Qbar_T),\\
  u_\eps\wstarto u \qquad &\textrm{ in } L^\infty((0,T),\Weqom),\\
  v_\eps\to v \qquad &\textrm{ in }C^{2,0}(\Qbar_T). 
 \end{align*}
\end{lemma}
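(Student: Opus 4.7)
The plan is a compactness-and-pass-to-the-limit argument in the spirit of Aubin-Lions, based on the estimates assembled in Section \ref{sec:parabolic_elliptic} together with passage to the limit in the weak formulation \eqref{eq:defsoln}.

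First I would extract uniform bounds. Since the $u_{0\eps}$ are compatible and $u_\eps\leq M$ pointwise on $Q_T$, Corollary \ref{thm:theestimate} yields a bound on $\norm[\Lqom]{\nabla u_\eps(\cdot,t)}$ depending only on $M$, $T$, $\sup_\eps\norm[\Weqom]{u_{0\eps}}$ and the parameters; combined with \eqref{eq:bounded} this gives $(u_\eps)$ uniformly bounded in $L^\infty((0,T),\Weqom)$. Lemma \ref{thm:timederivative} then provides a uniform bound on $u_{\eps t}$ in $L^p((0,T),(W^{1,q/(q-1)}(\Omega))^*)$ for some $p>1$. Because $q>n$, the chain $\Weqom\kompakteingebettetin C(\overline\Omega)\embeddedinto (W^{1,q/(q-1)}(\Omega))^*$ meets the hypotheses of Lemma \ref{thm:aubinlions}, so $(u_\eps)$ is precompact in $C([0,T],C(\overline\Omega))=C(\overline{Q_T})$. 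Banach-Alaoglu then produces, along a subsequence (not relabelled), $u_\eps\to u$ in $C(\overline{Q_T})$ and $u_\eps\weakstarto u$ in $L^\infty((0,T),\Weqom)$, with $u\geq 0$, radial, and in the regularity class demanded by Definition \ref{def:soln}.

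Next I would upgrade the convergence of $v_\eps$. Morrey's embedding gives a uniform bound on $u_\eps$ in $L^\infty((0,T),C^{1-n/q}(\overline\Omega))$; interpolating this with the uniform convergence $u_\eps\to u$ in $C(\overline{Q_T})$ yields $u_\eps\to u$ in $C([0,T],C^\alpha(\overline\Omega))$ for any $\alpha\in(0,1-n/q)$. The Hölder part of Lemma \ref{thm:elliptregularity} applied to $v_\eps-v$ then gives $v_\eps\to v$ in $C^{2+\alpha}(\overline\Omega)$ uniformly in $t$, where $v$ is the unique classical solution of $\Lap v-v=-u$ with $\delny v\amrand=0$; in particular $v_\eps\to v$ and $\nabla v_\eps\to\nabla v$ in $C(\overline{Q_T})$, so $v\in C^{2,0}(\overline{Q_T})$, and $v\geq 0$ by Lemma \ref{thm:nonneg}.

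Finally, I would test \eqref{eq:epsprob} against an arbitrary admissible $\phii$ and integrate by parts in time and space to obtain
\begin{align*}
-\intnT\intom u_\eps \phii_t - \intom u_{0\eps}\phii(\cdot,0)
&= -\eps\intnT\intom \nabla u_\eps\cdot\nabla\phii + \intnT\intom u_\eps\nabla v_\eps\cdot\nabla\phii \\
&\quad + \kappa\intnT\intom u_\eps\phii - \my\intnT\intom u_\eps^2\phii.
\end{align*}
The diffusive term vanishes in the limit because $\norm[\Lqom]{\nabla u_\eps}$ is uniformly bounded while $\eps_j\to 0$; the remaining integrals pass by the uniform convergences of $u_\eps$ and $\nabla v_\eps$, together with the implicit convergence $u_{0\eps}\to u_0$ in $L^1(\Omega)$ underlying the statement (cf. Lemma \ref{thm:compatibleapproximation}). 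The resulting identity is \eqref{eq:defsoln}, so $(u,v)$ is a strong \weqsoln. The substantive estimates are packaged into the preceding lemmata; the only genuinely delicate point in this assembly is the interpolation step used to promote $C(\overline{Q_T})$-convergence of $u_\eps$ to the $C^\alpha$-convergence needed to control $v_\eps$ via elliptic regularity.
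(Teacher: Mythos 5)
Your argument follows essentially the same Aubin--Lions route as the paper, with one cosmetic difference and one genuine gap.

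The cosmetic difference: you take $Y=C(\overline\Omega)$ in Lemma \ref{thm:aubinlions} and then recover $C([0,T],C^\alpha(\overline\Omega))$-convergence by interpolating the uniform $C^{1-n/q}$-bound against the $C(\overline{Q_T})$-convergence. The paper instead plugs $Y=C^\alpha(\overline\Omega)$ (for $0<\alpha<1-n/q$) directly into the Aubin--Lions lemma, since $\Weqom\kompakteingebettetin C^\alpha(\overline\Omega)$ is already compact, and so gets $C([0,T],C^\alpha(\overline\Omega))$-convergence in one step. Your interpolation inequality $\norm[C^\alpha]{g}\leq C\norm[C^0]{g}^{1-\alpha/\beta}\norm[C^\beta]{g}^{\alpha/\beta}$ is correct and the detour is harmless, but it is an extra move where the paper's choice is the more economical one.

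The genuine gap: you extract a subsequence via Banach--Alaoglu and Aubin--Lions, show the subsequential limit $(u,v)$ is a strong $W^{1,q}$-solution, and stop. But the lemma asserts convergence of the \emph{entire} sequence $(u_{\eps_j},v_{\eps_j})$, not of a subsequence. The missing step is the standard identification argument: given \emph{any} subsequence of $(\eps_j)$, the above compactness lets you extract a further subsequence along which $(u_{\eps},v_\eps)$ converges in the stated topologies to \emph{some} strong $W^{1,q}$-solution; by Lemma \ref{thm:unique} that limit is the same $(u,v)$ every time; hence the whole sequence converges. Without invoking uniqueness here, you have proved a strictly weaker statement than the lemma claims, and the downstream uses (Lemma \ref{thm:shorttimeexistencelimprob}, the proof of Theorem \ref{thm:main}) do rely on full-sequence convergence. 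The rest of the argument --- the uniform $L^\infty((0,T),\Weqom)$-bound from Corollary \ref{thm:theestimate}, the time-derivative bound from Lemma \ref{thm:timederivative}, the elliptic upgrade of $v_\eps$ via Lemma \ref{thm:elliptregularity}, and passing to the limit in the weak form with $\eps\intnT\intom\nabla u_\eps\cdot\nabla\phii\to 0$ --- matches the paper and is sound.
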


\begin{proof}
 According to Corollary \ref{thm:theestimate} and by \eqref{eq:bounded},
 \begin{equation*}
  \label{eq:bdinliweq}
  (u_{\eps_j})_j \text{is bounded in } L^\infty((0,T),\Weqom).
 \end{equation*}
 Lemma \ref{thm:timederivative} gives boundedness of the time derivatives: For some $p>1$, 
 \[
  (u_{\eps_j t})_j \text{ is bounded in } L^{p}((0,T),(W^{1,\frac{q}{q-1}}(\Om))^*). 
 \]
With the choice of $X=\Weqom, Y=C^\alpha(\Ombar), Z=(W^{1,\frac{q}{q-1}}(\Om))^*$, Lemma \ref{thm:aubinlions} allows to conclude relative compactness of $(u_{\eps_j})_j$ in $C([0,T],C^\alpha(\Om))$.
 Due to this and \eqref{eq:bdinliweq}, given any subsequence of $(\eps_j)_j$, we can pick a further subsequence thereof such that 
 \begin{align}
  \label{eq:limone}
   u_{\eps_{j_i}}\to u \qquad &\textrm{ in }C([0,T],C^\alpha(\Om)),\\
  \label{eq:limtwo}
   u_{\eps_{j_i}}\wstarto u\qquad &\textrm{ in }  L^\infty((0,T),\Weqom)
 \end{align}
 as $i\to \infty$ and also by the propagation of the Cauchy-property from $(u_{\eps_{j_i}})_{i\in\N}$ to $(v_{\eps_{j_i}})_{i\nat}$ via
\begin{align*}
 \norm[C^{2,0}(Q_T)]{v_{\eps_{j_i}}-v_{\eps_{j_k}}}&\leq\norm[C^{2+\alpha,0}(Q_T)]{v_{\eps_{j_i}}-v_{\eps_{j_k}}}\leq C\norm[C^{\alpha,0}(Q_T)]{u_{\eps_{j_i}}-u_{\eps_{j_k}}} = C\norm[{C([0,T],C^\alpha(\Om))}]{u_{\eps_{j_i}}-u_{\eps_{j_k}}}
\end{align*}
for all $i,k\nat$, where $C$ is the constant from Lemma \ref{thm:elliptregularity}, 
 \begin{align}
  \label{eq:limthree}
  v_{\eps_{j_i}}\to v \qquad C^{2,0}(Q_T).
 \end{align}
 The limit $(u,v)$ is a strong \weqsoln\ of \eqref{eq:limprob}, as can be seen by testing \eqref{eq:epsprob} by an arbitrary $\phii\in\Cninf(\Ombar\times[0,T))$ and taking $\eps=\eps_{j_i}\to 0$ in each of the integrals seperately, as possible by \eqref{eq:limone} to \eqref{eq:limthree}:
 \[
  -\intnT\intom u \phii_t-\intom u_{0}\phii(\cdot,t)=-0\cdot\intnT\intom\na u\na \phii-\intnT\intom u\na v \na \phii+\kappa \intnT\intom u \phii-\my \intnT\intom u^2\phii.
 \]
 Hence the limit of all these subsequences $u_{\eps_{j_i}}$ of subsequences is the same, namely the unique (Lemma \ref{thm:unique}) solution of \eqref{eq:limprob} and therefore the whole sequence converges (in the spaces indicated in equations \eqref{eq:limone} to \eqref{eq:limthree}) to the solution $(u,v)$ of \eqref{eq:limprob}. 
\end{proof}

In Lemma \ref{thm:kgztosolnifbd} we assumed uniform boundedness of the approximating solutions. Fortunately, on small time scales we are entitled to do so and can prove the following:  
\begin{lemma}%compare \cite[Lemma 4.5]{winkler_14_ctexceed}
\label{thm:shorttimeexistencelimprob}
 Let $\kappa\geq 0, \my>0, q>n$. Then for $D>0$ there is some $T(D)>0$ such that for any radial symmetric nonnegative $u_0\in \Weqom$ fulfilling $\norm[\Weqom]{u_0}<D$ there is a unique $\Weqom$-solution $(u,v)$ of \eqref{eq:limprob} in $\Om\times (0,T(D))$.
 Furthermore, if $u_{0\eps}$ are compatible functions satisfying $\norm[\Weqom]{u_{0\eps}-u_0}<\eps$, this solution $(u,v)$ can be approximated by solutions $(u_\eps,v_\eps)$ of \eqref{eq:epsprob} (with initial condition $u_{0\eps}$) in the following sense:
 \begin{align}
   u_\eps\to u \qquad&\textrm{ in } C(\Ombar\times[0,T])\\
   u_\eps\wstarto u \qquad &\textrm{ in } L^\infty((0,T),\Weqom)\\
   v_\eps\to v \qquad &\textrm{ in }C^{2,0}(\Om\times(0,T)). 
 \end{align}
Moreover, with $K$ as in Lemma \ref{thm:estimate} this solution satisfies
 \[
  \int_{\Om} |\na u(\cdot,t)|^q \leq \left(\int_{\Om} |\na u_0|^q +K|\Om| \intnt\norm[\Liom]{u}^{1+q}\right)\exp\left( \intnt(5q\norm[\Liom]{u}) +\kappa q t \right)
 \]
for a.e. $t\in(0,T(D))$.
\end{lemma}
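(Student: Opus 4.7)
The plan is to construct $(u,v)$ as a limit of approximate solutions of \eqref{eq:epsprob} and to deduce the quantitative gradient bound by passage to the limit. Given $u_0\in\Weqom$ with $\norm[\Weqom]{u_0}<D$, I first apply Lemma~\ref{thm:compatibleapproximation} along a null sequence $\eps_j\downto 0$ to obtain radial, nonnegative, compatible approximations $u_{0\eps_j}\in\Ceombar$ with $\norm[\Weqom]{u_{0\eps_j}-u_0}<\eps_j$. For $\eps_j<1$ the norms $\norm[\Weqom]{u_{0\eps_j}}$ stay below $D+1$, so Theorem~\ref{thm:commonexistencetime} applied with $D':=D+1$ yields numbers $T(D):=T(D')$ and $M:=M(D')$ such that the approximate solutions $(u_{\eps_j},v_{\eps_j})$ exist classically on $Q_{T(D)}$ and satisfy $\norm[\LiQT]{u_{\eps_j}}\leq M$ on that slab. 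Lemma~\ref{thm:kgztosolnifbd} then furnishes a strong \weqsoln\ $(u,v)$ of \eqref{eq:limprob} on $Q_{T(D)}$ together with the asserted modes of convergence, and uniqueness is the content of Lemma~\ref{thm:unique}. Given any other family of compatible data $u_{0\eps}$ with $\norm[\Weqom]{u_{0\eps}-u_0}<\eps$, the same reasoning applied along an arbitrary null subsequence produces a limit which by uniqueness must coincide with $(u,v)$, so the full family converges in the indicated sense.

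To obtain the $\Lqom$-bound on $\na u(\cdot,t)$, I would pass to the limit in the inequality from Corollary~\ref{thm:theestimate} applied to each $u_{\eps_j}$. The right-hand side converges to the claimed expression without trouble: $\intom|\na u_{0\eps_j}|^q\to\intom|\na u_0|^q$ by $\Weqom$-convergence of the initial data, and since $u_{\eps_j}\to u$ in $C(\overline{Q}_{T(D)})$, the maps $s\mapsto\norm[\Liom]{u_{\eps_j}(\cdot,s)}$ converge uniformly on $[0,T(D)]$ to $s\mapsto\norm[\Liom]{u(\cdot,s)}$, which is enough to pass to the limit both in the time integrals and in the exponential factor.

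The main obstacle is the left-hand side, since the pointwise-in-time functional $t\mapsto\intom|\na u(\cdot,t)|^q$ is not preserved under the weak-type limits available for $u_{\eps_j}$. To handle this I use that $\na u_{\eps_j}$ is bounded in $L^q(Q_{T(D)})^n$ and that uniform convergence $u_{\eps_j}\to u$ in $C(\overline{Q}_{T(D)})$ forces distributional convergence of the gradients; extracting a weakly convergent subsequence and identifying the limit via distributions therefore shows $\na u_{\eps_j}\weakto\na u$ in $L^q(Q_{T(D)})^n$ along the full sequence. Weak lower semi-continuity of the $L^q$-norm on the space-time slab $(t_0-\delta,t_0+\delta)\times\Om$ then gives
\[
 \frac1{2\delta}\int_{t_0-\delta}^{t_0+\delta}\intom|\na u(\cdot,s)|^q\,ds\le\frac1{2\delta}\int_{t_0-\delta}^{t_0+\delta}F(s)\,ds,
\]
with $F$ denoting the (now explicit) limit of the right-hand side, and an application of the Lebesgue differentiation theorem produces the claimed inequality for a.e.\ $t_0\in(0,T(D))$.
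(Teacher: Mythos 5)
Your construction of $(u,v)$ --- mollifying the data (Lemma~\ref{thm:compatibleapproximation}), invoking Theorem~\ref{thm:commonexistencetime} with $D+1$ to get a common existence time and a uniform bound $M(D)$, then passing to the limit via Lemma~\ref{thm:kgztosolnifbd} and appealing to Lemma~\ref{thm:unique} for uniqueness --- is exactly the route the paper takes, and your subsequence argument showing the whole family $(u_\eps,v_\eps)$ converges is also the intended one. Where you diverge from the paper is in extracting the pointwise-in-time gradient bound from Corollary~\ref{thm:theestimate}. The paper works at a fixed time slice: pick a.e.\ $t$ with $u(\cdot,t)\in\Weqom$ (available since $u\in L^\infty_{loc}([0,T);\Weqom)$), note that the right-hand side for $u_{\eps_j}$ converges to the claimed limit and in particular is bounded, extract a weakly convergent subsequence of $\na u_{\eps_j}(\cdot,t)$ in $L^q(\Om)^n$, identify the limit as $\na u(\cdot,t)$ via the uniform convergence $u_{\eps_j}\to u$, and apply weak lower semicontinuity of the $L^q(\Om)$-norm directly. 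You instead obtain $\na u_{\eps_j}\weakto\na u$ in $L^q(Q_{T(D)})^n$ (either by your boundedness-plus-distributions argument or just by reading it off the $L^\infty((0,T);\Weqom)$ weak-star convergence already provided by Lemma~\ref{thm:kgztosolnifbd}), then apply weak lower semicontinuity on the thin space-time slab $(t_0-\delta,t_0+\delta)\times\Om$ and conclude with the Lebesgue differentiation theorem. Both routes are correct and reach the same ``for a.e.\ $t$'' conclusion; the paper's slice-by-slice argument is a bit more direct and avoids the Lebesgue-point step, while yours has the modest advantage of not needing to preselect the exceptional null set using the a priori membership $u(\cdot,t)\in\Weqom$.
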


\begin{proof}
For $\eps\in(0,1)$ let $u_{0\eps}$ be compatible and $\norm[\Weqom]{u_{0\eps}-u_0}<\eps$. Apply Theorem \ref{thm:commonexistencetime} with $D+1$ to obtain $T(D)$ such that the solutions $u_\eps$ to \eqref{eq:epsprob} with initial data $u_0$ exist on $(0,T(D))$ and are bounded by $M(D)$ on that interval.
Here Lemma \ref{thm:kgztosolnifbd} applies to provide a strong \weqsoln\ with the claimed approximation properties. The inequality results from Corollary \ref{thm:theestimate} as follows:
\\According to Corollary \ref{thm:theestimate}, for all $t\in(0,T(D))$, 
\[
 \int_{\Om} |\na u_\eps(\cdot,t)|^q \leq \left(\int_{\Om} |\na u_{0\eps}|^q +K|\Om| \intnt\norm[\Liom]{u_\eps}^{1+q}\right)\exp\left( \intnt(5q\norm[\Liom]{u_\eps}) +\kappa q t \right).
\]
Let $t\geq 0$ be such that $u(\cdot,t)\in \Weqom$. Convergence of the right hand side is obvious because of the uniform convergence $u_\eps\to u$ and $u_{0\eps}\to u_0$ in $\Weqom$.
This implies boundedness of $(\na u_{\eps_j})_j$ in $L^q$, hence $L^q$-weak convergence along a subsequence and - due to the  weak lower semicontinuity of the norm - %Dass der zugeh. Grenzwert die Ableitung ist, folgt aus schwacher kgz zusammen mit der glm konv. von u(\cdot,t).
\[
 \intom|\na u(\cdot,t)|^q \leq \liminf_{k\to \infty} \intom|\na u_{\eps_{j_k}}(\cdot,t)|^q  \leq \left(\intom|\na u_{0}|^q+|\Om|\intnt\norm[\Liom]{u}^{1+q}\right)\exp\left(\int 5q \norm[\Liom]{u}+\kappa q t\right).
\]
\end{proof}

\subsection{Continuation and existence on maximal time intervals}
Solutions constructed up to now may only exist on very short time intervals. With the following theorem (which parallels \cite[Thm. 1.2]{winkler_14_ctexceed} in statement and proof) we ensure that they can be glued together to yield a solution on a maximal time interval -- to all eternity or until blow-up.
\begin{theorem}
\label{thm:existence}
 Let $\kappa\geq 0, \my>0$, for some $q>n$ suppose $u_0\in\Weqom$ is nonnegative and radially symmetric. Then there exist $\Tmax\in(0,\infty]$ and a unique pair $(u,v)$ of functions
\begin{align*}
 u&\in\CombarNTmax\cap\LilocNTmaxWeqom,\\
 v&\in\CzombarNTmax
\end{align*}
that form a strong \weqsoln\  of \eqref{eq:limprob} and which are such that 
\begin{equation}
\label{eq:ext}
\textrm{either }\quad\Tmax=\infty \quad\textrm{ or }\quad \limsuptTmax \norm[\Liom]{u(\cdot, t)}=\infty
\end{equation}
\end{theorem}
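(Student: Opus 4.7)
The plan is the standard maximal-extension argument: use Lemma \ref{thm:shorttimeexistencelimprob} to produce local solutions, glue them with Lemma \ref{thm:unique}, and invoke the a priori estimate of Lemma \ref{thm:shorttimeexistencelimprob} (ultimately Corollary \ref{thm:theestimate}) to derive the extensibility dichotomy \eqref{eq:ext}.

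More precisely, set
\[
 \Tmax:=\sup\setl{T\in(0,\infty]\col \text{there is a strong \weqsoln\ of \eqref{eq:limprob} on } Q_T}.
\]
By Lemma \ref{thm:shorttimeexistencelimprob}, applied with $D:=\norm[\Weqom]{u_0}+1$, this set contains $T(D)>0$, so $\Tmax>0$. If $(u_1,v_1)$ and $(u_2,v_2)$ are two strong \weqsoln s on $Q_{T_1}$ and $Q_{T_2}$, respectively, then Lemma \ref{thm:unique} applied on $Q_{\min(T_1,T_2)}$ yields $(u_1,v_1)=(u_2,v_2)$ there, so the family of strong \weqsoln s can be concatenated unambiguously to a unique maximal solution $(u,v)$ with the asserted regularity on $[0,\Tmax)$.

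For the dichotomy, assume for contradiction that $\Tmax<\infty$ and
\[
 M:=\limsuptTmax \norm[\Liom]{u(\cdot,t)}<\infty.
\]
Then the $\eta$-independent estimate stated in the last line of Lemma \ref{thm:shorttimeexistencelimprob}, valid on each local piece of the construction, gives a constant $D_1>0$ with $\norm[\Lqom]{\na u(\cdot,t)}\leq D_1$ for a.e. $t\in(0,\Tmax)$, and together with the $\Liom$-bound one obtains
\[
 \norm[\Weqom]{u(\cdot,t)}\leq D_0:=M|\Om|^{\frac1q}+D_1\qquad\text{for a.e. } t\in(0,\Tmax).
\]
Let $T_0:=T(D_0)$ from Lemma \ref{thm:shorttimeexistencelimprob}. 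Choose $t_0\in(\Tmax-\frac{T_0}{2},\Tmax)$ at which the above $W^{1,q}$-bound holds (the exceptional set has measure zero, so such $t_0$ exists) and such that $u(\cdot,t_0)$ is radially symmetric and nonnegative (both properties are preserved from $u_0$ through the approximation in Lemma \ref{thm:shorttimeexistencelimprob}). Applying Lemma \ref{thm:shorttimeexistencelimprob} to the new initial datum $u(\cdot,t_0)$ yields a strong \weqsoln\ of \eqref{eq:limprob} on $(t_0,t_0+T_0)\supset(t_0,\Tmax]$; by the uniqueness of Lemma \ref{thm:unique} this new solution coincides on $(t_0,\Tmax)$ with the restriction of $(u,v)$, so concatenating $(u,v)\amrand_{[0,t_0]}$ with the new piece produces a strong \weqsoln\ on an interval strictly larger than $[0,\Tmax)$, contradicting the definition of $\Tmax$.

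The only delicate point is the extension step, where one must guarantee that the $L^\infty$-control on $(0,\Tmax)$ really upgrades to a $W^{1,q}$-bound at some time arbitrarily close to $\Tmax$; this is precisely what the exponential-type estimate from Corollary \ref{thm:theestimate} delivers, passing through the approximation used in Lemma \ref{thm:shorttimeexistencelimprob}. Everything else is formal gluing via uniqueness.
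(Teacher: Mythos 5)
Your proof follows the same maximal-extension strategy as the paper: obtain a local solution via Lemma \ref{thm:shorttimeexistencelimprob}, glue using uniqueness (Lemma \ref{thm:unique}), and if $\Tmax<\infty$ with $u$ bounded, extend past $\Tmax$ to contradict maximality. The logic is sound and the conclusion correct.

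However, there is one point you gloss over that the paper treats explicitly, and it is the technical heart of the argument. You claim that ``the $\eta$-independent estimate stated in the last line of Lemma \ref{thm:shorttimeexistencelimprob}, valid on each local piece of the construction, gives a constant $D_1$'' bounding $\|\na u(\cdot,t)\|_{L^q}$ a.e.\ on $(0,\Tmax)$. But the estimate on each local piece is stated \emph{relative to the initial datum of that piece}, not relative to $u_0$; to get a uniform constant you must compound these estimates across pieces. That this compounding closes up (the bound stays finite, the pieces have a uniform lower length so finitely many reach $\Tmax$) requires the computation
\[
 \left[\Bigl(\int_\Om |\na u_0|^q + K|\Om|\int_0^{t_0}\|u\|_\infty^{1+q}\Bigr)e^{5q\int_0^{t_0}\|u\|_\infty+\kappa q t_0} + K|\Om|\int_{t_0}^{t}\|u\|_\infty^{1+q}\right]e^{5q\int_{t_0}^{t}\|u\|_\infty+\kappa q(t-t_0)} \leq \Bigl(\int_\Om |\na u_0|^q + K|\Om|\int_0^{t}\|u\|_\infty^{1+q}\Bigr)e^{5q\int_0^{t}\|u\|_\infty+\kappa q t},
\]
which shows the exponential estimate \eqref{eq:l45star} (always with base point $t=0$) is preserved under gluing. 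The paper builds this estimate directly into the definition of its auxiliary set $S$ and verifies the preservation in its final display. You should either write out this verification or adopt the paper's definition of $\Tmax$ as the supremum of times for which the solution both exists \emph{and} satisfies \eqref{eq:l45star}; otherwise the step ``gives a constant $D_1$'' is not actually justified by what Lemma \ref{thm:shorttimeexistencelimprob} says.

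A minor secondary remark: you bound $\|u(\cdot,t_0)\|_{W^{1,q}}\leq M|\Om|^{1/q}+D_1$, which presumes a specific (sum) convention for the $W^{1,q}$-norm; this is harmless but worth being aware of.
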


\begin{proof}
 Apply Lemma \ref{thm:shorttimeexistencelimprob} to $D:=\norm[\Weqom]{u_0}$ to gain $T>0$ and a strong \weqsoln\ $(u,v)$ of \eqref{eq:limprob} in $\Om\times(0,T)$ fulfilling 
\begin{equation}
 \label{eq:l45star} 
 \intom|\na u|^q\leq \left(\intom|\na u_0|^q +K|\Om|\intnt \norm[\Liom]{u}^{1+q}\right)\exp(5q\intnt\norm[\Liom]{u}+\kappa q t)
\end{equation}
for almost every $t\in(0,T)$.
Accordingly, the set 
\begin{align*}
 S:=&\set{\Ttilde>0\; |\; \exists \text{ strong \weqsoln\ of \eqref{eq:limprob} in }\Om\times(0,\Ttilde)\\&\qquad\qquad \text{with initial condition }u_0\text{ and satisfying \eqref{eq:l45star} for a.e. }t\in(0,\Ttilde)}
\end{align*}
is not empty and $\Tmax:=\sup S\leq\infty$ is well-defined.

According to Lemma \ref{thm:unique} the strong \weqsoln\ on $\Om\times (0,\Tmax)$, which obviously exists, is unique. We only have to show the extensibility criterion \eqref{eq:ext}.

Assume $\Tmax<\infty$ and $\limsuptTmax\norm[\Liom]{u(\cdot,t)}<\infty$.
Then there exists $M>0$ such that for all $(x,t)\in\Om\times(0,\Tmax)$
\[
 u(x,t)\leq M.
\]
Let $N\subset (0,\Tmax)$ be a set of measure zero, as provided by the definition of $S$, such that \eqref{eq:l45star} holds 
for all $t\in(0,\Tmax)\setminus N$.

Together with $u\leq M$ %for the non-gradient part of the norm.
this would imply 
\[
 \norm[\Weqom]{u(\cdot,t_0)}\leq D_1 
\]
for some positive $D_1$ and for each $t_0\in(0,\Tmax)\ohne N$. Lemma \ref{thm:shorttimeexistencelimprob} would yield the existence of a strong \weqsoln\ of 
\begin{align*}
 \uhat_t&=-\na(\uhat\na \vhat)+\kappa\uhat-\my\uhat^2\\
0&=\Lap\vhat-\vhat+\uhat\\
0&=\delny\uhat\amrand=\delny \vhat\amrand,\\
\uhat(x,0)&=u(x,t_0)
\end{align*}
on $\Om\times(0,T(D_1))$, which would satisfy
\begin{equation}
\label{eq:lateestimate}
 \intom|\na\uhat(t)|^q\leq \left(\intom |\na u(t_0)|^q+K|\Om|\intnt\norm[\Liom]{u}^{1+q}\right)\exp\left(5q\intnt\norm[\Liom]{u}+\kappa q t\right)
\end{equation}
for almost every $t\in(0,T(D_1))$.\\
Upon the choice of $t_0\in(0,\Tmax)\ohne N$ with $t_0>\Tmax-\frac{T(D_1)}2$, 
\[
 (\utilde,\vtilde)(\cdot,t)=\begin{cases}(u,v)(\cdot,t)&t\in(0,t_0)\\(\uhat,\vhat)(\cdot,t-t_0)&t\in[t_0,t_0+T(D_1))\end{cases}
\]
would define a strong \weqsoln\ of \eqref{eq:limprob} in $\Om\times(0,t_0+T(D_1))$ which clearly would satisfy \eqref{eq:l45star} for a.e. $t<t_0$.
For $t>t_0$ on the other hand, a combination of \eqref{eq:lateestimate} and \eqref{eq:l45star} would give 
\begin{align*}
 \intom|\na \utilde(t)|^q
&\leq \left(\left(\intom|\na u_0|^q+K|\Om|\intntn\norm[\Liom]{u}^{1+q}\right)\exp\left(5q\intntn\norm[\Liom]{u}+\kappa q t_0\right)+K|\Om|\int_{t_0}^t\norm[\Liom]{u}^{1+q}\right)\\
&\qquad\qquad\qquad\qquad\qquad\qquad\qquad\qquad\qquad\qquad\qquad\qquad\qquad \cdot\exp(5q\int_{t_0}^t\norm[\Liom]{u}+\kappa q(t-t_0))\\
&\leq\left(\intom|\na u_0|^q+K|\Om|\intnt \norm[\Liom]{u}^{1+q}\right)\exp(5q\intnt\norm[\Liom]{u}+\kappa q t)
\end{align*}
This would finally lead to a contratiction to the definition of $\Tmax$ as supremum, because then obviously $(\utilde,\vtilde)$ would satisfy \eqref{eq:l45star} for a.e. $t\in(0,t_0+T(D_1))$.
\end{proof}

\subsection{An estimate for strong solutions: boundedness in $L^1$}
Having confirmed existence and uniqueness of solutions, we set out to explore some more of their properties. And as in \cite[Lemma 4.1]{winkler_14_ctexceed}, one of the first facts that can be observed (and proven like in the 1-dimensional case) is their boundedness in $L^1$.
\begin{lemma}
 \label{thm:limproblebd}%q>n f\"ur die Einbettung zu Beginn des Beweises.
 Let $\kappa\geq 0,\my>0$, assume that for $T>0,q> n$, $u$ is a strong \weqsoln\ of \eqref{eq:limprob} in $Q_T$ with $u_0\in\Weqom, u_0$ nonnegative. Then for all $t\in(0,T)$
\[
 \intom u(x,t)dx \leq \max\setl{\intom u_0,\frac{\kappa|\Om|}\my}.
\]
\end{lemma}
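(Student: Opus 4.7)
The plan is to use the weak formulation with a purely time-dependent cut-off to recover the ODE-type estimate $y'(t)\leq \kappa y(t)-\frac{\my}{|\Om|}y^2(t)$ for $y(t):=\intom u(\cdot,t)$, and then to compare with the corresponding logistic ODE.

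First, fix $t_0\in(0,T)$ and, for small $\delta\in(0,T-t_0)$, define the same cut-off $\chi_\delta$ that was used in the uniqueness proof, now taken as the test function $\phii(x,t):=\chi_\delta(t)$, which is spatially constant and hence admissible in Definition \ref{def:soln} (it has compact support in $\Ombar\times[0,T)$ and $\phii_t\in\LeQT$). Since $\na \phii\equiv 0$, the chemotaxis term drops out and \eqref{eq:defsoln} reduces to
\[
 \frac1\delta\inttntnd\intom u(x,t)\,dx\,dt-\intom u_0=\kappa\intn{t_0+\delta}\chi_\delta(t)\intom u-\my\intn{t_0+\delta}\chi_\delta(t)\intom u^2.
\]
Continuity of $u$ on $\Ombar\times[0,T)$ lets me pass to the limit $\delta\downto 0$ in the left-hand side (Lebesgue differentiation on a continuous integrand), and dominated convergence handles the right-hand side, so that with $y(t):=\intom u(\cdot,t)$ I obtain
\[
 y(t_0)=y(0)+\kappa\intn{t_0}y(s)\,ds-\my\intn{t_0}\intom u^2(\cdot,s)\,ds \qquad\text{for every }t_0\in(0,T).
\]

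Second, I apply the Cauchy--Schwarz (or Jensen's) inequality $\intom u^2\geq |\Om|^{-1}\left(\intom u\right)^2=|\Om|^{-1}y^2$, whence
\[
 y(t_0)\leq y(0)+\intn{t_0}\Bigl(\kappa y(s)-\frac\my{|\Om|}y^2(s)\Bigr)\,ds,\qquad t_0\in(0,T).
\]
The right-hand side is of the form required by Lemma \ref{thm:owngronwall} with $f(\xi):=\kappa\xi-\frac\my{|\Om|}\xi^2$, which is locally Lipschitz on $[0,\infty)$. However, $f$ is not monotone on all of $[0,\infty)$, so I apply Lemma \ref{thm:owngronwall} on the invariant interval where $f$ is nondecreasing, or more directly argue by a standard ODE-comparison: the function $f$ is non-positive exactly on $[\frac{\kappa|\Om|}\my,\infty)$, so the solution $\ybar$ of $\ybar'=f(\ybar)$, $\ybar(0)=\max\{y(0)+\eta,\frac{\kappa|\Om|}\my+\eta\}$, is monotone (decreasing if it starts above the equilibrium, nondecreasing but bounded by the equilibrium otherwise) and satisfies $\ybar(t)\leq\ybar(0)$ for all $t\geq 0$.

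Third, an elementary comparison (which can either be cast into the framework of Lemma \ref{thm:owngronwall} after restricting to a suitable invariant subinterval, or proven directly by considering the first time where $y$ would exceed $\ybar$) then yields $y(t)\leq \ybar(t)$ on $(0,T)$. Letting $\eta\downto 0$ gives
\[
 \intom u(\cdot,t)\leq \max\setl{\intom u_0,\frac{\kappa|\Om|}\my}\qquad\text{for all }t\in(0,T),
\]
which is the claim. The only mildly delicate point is the ODE comparison when $f$ fails to be globally monotone; this is handled by splitting according to whether $y(0)$ lies above or below the equilibrium $\frac{\kappa|\Om|}\my$ and using that both $y$ and $\ybar$ stay on the corresponding side of it.
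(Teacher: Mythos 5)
Your proof is correct and takes essentially the same route as the paper: test the weak formulation \eqref{eq:defsoln} with a spatially constant, compactly supported time cut-off so that $\na\phii\equiv 0$ kills the chemotaxis term, apply Jensen's inequality to $\intom u^2$, and compare with the logistic ODE. The only cosmetic difference is that you use the one-sided ramp from the uniqueness proof (giving the integral identity for $y(t_0)-y(0)$), whereas the paper uses a trapezoidal bump on $[t_0-\delta,t_1+\delta]$ to produce $y(t_1)-y(t_0)$ and hence $y\in C^1$ with $y'\leq\kappa y-\frac{\my}{|\Om|}y^2$ directly; your worry about the non-monotonicity of $f$ is legitimate as far as Lemma \ref{thm:owngronwall} is concerned, but is moot once one notes that your integral identity already gives $y\in C^1$ (since $t\mapsto\intom u^2(\cdot,t)$ is continuous), after which the classical comparison principle for locally Lipschitz right-hand sides applies without any monotonicity hypothesis.
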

\begin{proof}
 Define $y(t)=\intom u(x,t)dx.$ Then $y$ is continuous, as 
$u\in C(\Ombar\times[0,T))$ and $u\in L^\infty_{loc}([0,T),\Weqom)\eingebettetin L^\infty_{loc}([0,T),L^1(\Om))\eingebettetin L^1_{loc}([0,T)\times\Ombar)$, 
and it is sufficient to show that $y\in C^1((0,T))$ and for all $t\in(0,T)$
\[
 y'(t)\leq \kappa y(t)-\frac\my{|\Om|} y^2(t).
\]
To see this, let $t_0\in(0,T), t_1\in(t_0,T)$ and let $\chi_\delta\in\WeiR$ be given by 
\[
 \chi_\delta(t)=\begin{cases}
                 0&t<t_0-\delta \vee t>t_1+\delta\\
		  \frac{t-t_0+\delta}{\delta}&t\in[t_0-\delta,t_0]\\
		  1&t\in(t_0,t_1)\\
		  \frac{t_1-t+\delta}{\delta}&t\in[t_1,t_1+\delta]
                \end{cases}
\]
for $\delta\in(0,\delta_0)$ with $\delta_0=\min\set{t_0,T-t_1}$. Then $\phii(x,t):=\chi_\delta(t)$, $(x,t)\in \Om\times(0,T)$ defines an admissible testfunction in \eqref{eq:defsoln} and we have 
\[
 -\intnT\intom u\phii_t-\intom u_0 \phii(\cdot,0) = \intnT\intom u\na v \na \phii+\kappa\intnT\intom u\phii-\my\intnT\intom u^2\phii
\]
so that, by $\chi_\delta(0)=0$ and $\na \phii=0$, 
\[
 -\frac1\delta\int_{t_0-\delta}^{t_0} \intom u+\frac1\delta\int_{t_1}^{t_1+\delta}\intom u=\kappa\intnT\intom \chi_\delta(t)u(x,t)dxdt-\my\intnT\intom u^2\chi_\delta(t)dxdt.
\]
Since $u$ is continuous, the left hand side converges to $y(t_1)-y(t_0)$, whereas the right hand side makes application of the dominated convergence theorem possible due to the boundedness of $u$ on $[0,T-\delta_0]$ %($u\in C(\Ombar\times [0,T-\delta_0])$, hence bounded and also $u^2$ is a suitable majorant) 
as $\delta\downto 0$ and we arrive at
\begin{align*}
 y(t_1)-y(t_0)&=\kappa\int_{t_0}^{t_1} \intom u-\my\int_{t_0}^{t_1}\intom u^2 dxdt\leq \kappa\int_{t_0}^{t_1} u-\my \int_{t_0}^{t_1} \frac 1{|\Om|}\left(\int u\right)^2.
\end{align*}
Upon division by $t_1-t_0$ and taking limits $t_1\to t_0$, we infer that indeed $y\in C^1((0,T))$ with 
\[
 y'(t)\leq \kappa y(t)-\frac \my {|\Om|}y^2(t)\qquad \textrm{ for all } t\in(0,T).\qedhere
\]
\end{proof}
\subsection{Global existence for large $\my$}
Bounds on the $\Liom$-norm are the only thing we need to guarantee existence of solutions for longer times. They arise as a corollary to Lemma \ref{thm:epsprobbounds}, which directly implies the following.

\begin{corollary}
\label{thm:limprobbounds}
 Let $\kappa\geq 0,\my\geq 1, q>n$. For each nonnegative, radial $u_0\in\Weqom$, \eqref{eq:limprob} has a unique global strong \weqsoln\ $(u,v)$. Furthermore, if $u_0\not\equiv 0$, then 
\[
 \norm[\Liom]{u(\cdot,t)}\leq\begin{cases}
  \frac{\kappa}{\my-1}(1+(\frac{\kappa}{(\my-1)\norm[\Liom]{u_0}}-1)e^{-\kappa t})\inv,&\kappa>0,\my>1,\\
  \frac{\norm[\Liom]{u_0}}{1+(\my-1)\norm[\Liom]{u_0}t},&\kappa=0,\my>1,\\
  \norm[\Liom]{u_0}e^{\kappa t},&\kappa>0, \my=1,\\
  \norm[\Liom]{u_0},&\kappa=0, \my=1.
  \end{cases}
\]
\end{corollary}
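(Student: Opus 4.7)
The plan is to combine the $\eps$-independent bounds from Lemma \ref{thm:epsprobbounds} with an approximation argument, using the extensibility criterion of Theorem \ref{thm:existence} to upgrade the local solution to a global one.

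First I would pick, via Lemma \ref{thm:compatibleapproximation}, a sequence of compatible nonnegative radial approximations $u_{0\eps}\in\Ceombar$ with $u_{0\eps}\to u_0$ in $\Weqom$ as $\eps\downto 0$; since $q>n$, Sobolev embedding then also gives $\norm[\Liom]{u_{0\eps}}\to\norm[\Liom]{u_0}$. As $\my\geq 1>\frac{n-2}{n}$, Corollary \ref{thm:n2bounded} ensures that the classical solution $(u_\eps,v_\eps)$ of \eqref{eq:epsprob} with initial datum $u_{0\eps}$ is globally defined, and Lemma \ref{thm:epsprobbounds} provides the pointwise bound $\norm[\Liom]{u_\eps(\cdot,t)}\leq B_\eps(t)$, where $B_\eps(t)$ is the right-hand side of the asserted estimate but with $\norm[\Liom]{u_0}$ replaced by $\norm[\Liom]{u_{0\eps}}$.

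Second, for an arbitrary $T>0$ this supplies a uniform bound $u_\eps(x,t)\leq M(T)$ on $Q_T$, so Lemma \ref{thm:kgztosolnifbd} yields a strong \weqsoln\ $(u_T,v_T)$ of \eqref{eq:limprob} on $Q_T$ with $u_\eps\to u_T$ uniformly on $\Qbar_T$. Each of the four expressions defining $B_\eps(t)$ depends continuously (in fact monotonically) on $\norm[\Liom]{u_{0\eps}}$, so passing to the limit in $\norm[\Liom]{u_\eps(\cdot,t)}\leq B_\eps(t)$ transfers the asserted bound to $u_T$ on $(0,T)$.

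Finally, Lemma \ref{thm:unique} identifies $(u_T,v_T)$ on $(0,T)$ with the restriction of the maximal \weqsoln\ $(u,v)$ from Theorem \ref{thm:existence}. Since $T$ is arbitrary and $\norm[\Liom]{u(\cdot,t)}$ stays bounded on every bounded time interval, the blow-up alternative in \eqref{eq:ext} is ruled out; hence $\Tmax=\infty$ and the bound just established is the one claimed. The only mildly delicate point is the continuity of $B_\eps$ in the $\kappa>0,\my>1$ regime, which amounts to checking that $y_0\mapsto\frac{\kappa/(\my-1)}{1+(\kappa/((\my-1)y_0)-1)e^{-\kappa t}}$ is continuous in $y_0>0$; this is immediate.
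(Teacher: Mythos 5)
Your proof is correct and follows essentially the same route as the paper's: approximate by global solutions of \eqref{eq:epsprob}, use the $\eps$-uniform comparison bound of Lemma \ref{thm:epsprobbounds}, pass to the limit via Lemma \ref{thm:kgztosolnifbd}, identify the limit with the solution from Theorem \ref{thm:existence} by uniqueness, and rule out blow-up via \eqref{eq:ext}. Your version is a little more explicit than the paper's one-sentence argument in one respect: the paper simply asserts that the maximal solution of Theorem \ref{thm:existence} is approximated by $\eps$-solutions in $C(\Ombar\times[0,T])$ for each $T<\Tmax$ (which follows from the gluing construction but is not restated as a lemma), whereas you reconstruct that approximation directly from Corollary \ref{thm:n2bounded} and Lemmas \ref{thm:epsprobbounds}, \ref{thm:kgztosolnifbd} and then invoke uniqueness to match the limit to the maximal solution; both arguments are sound.
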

\begin{proof}
Local existence up to a maximal time $\Tmax\leq\infty$ is given by Theorem \ref{thm:existence}. For each $T\in(0,\Tmax)$, there are solutions of \eqref{eq:epsprob} converging to $(u,v)$ in $C(\Ombar\times(0,T))$, hence $u$ inherits the bounds from Lemma \ref{thm:epsprobbounds}. By \eqref{eq:ext}, $(u,v)$ must thus be global.
\end{proof}
Without further labour, we can state what we have obtained so far: %as analogue of \cite[Thm 1.3]{winkler_14_ctexceed}.
\begin{proposition}
\label{thm:limprobglobal}
 Let $\kappa\geq 0,\my\geq 1, q>n$. Then for each nonnegative radial $u_0\in\Weqom$, \eqref{eq:limprob} has a unique global strong \weqsoln. Furthermore, if $\my>1$ or $\kappa=0$, $u,v$ are bounded in $\Om\times (0,\infty)$.
\end{proposition}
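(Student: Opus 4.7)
The plan is to read off the proposition directly from results already assembled in the paper, so there is no new analytic step to carry out; the proof is essentially a bookkeeping exercise.

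First I would invoke Corollary \ref{thm:limprobbounds}, which already asserts, for every nonnegative radial $u_0\in\Weqom$ and all $\kappa\ge 0$, $\my\ge 1$, the existence of a unique global strong \weqsoln\ $(u,v)$ of \eqref{eq:limprob}, so the first half of the proposition needs no further argument. Uniqueness on every finite time interval is Theorem \ref{thm:unique}, and gluing along the maximal-existence criterion \eqref{eq:ext} upgrades this to uniqueness of the global solution.

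For the boundedness statement I would examine the three relevant cases of the explicit bounds listed in Corollary \ref{thm:limprobbounds}. In the case $\my>1,\kappa>0$, the right-hand side $\frac{\kappa}{\my-1}\bigl(1+\bigl(\frac{\kappa}{(\my-1)\norm[\Liom]{u_0}}-1\bigr)e^{-\kappa t}\bigr)^{-1}$ is monotone in $t$ and equals $\norm[\Liom]{u_0}$ at $t=0$ and $\frac{\kappa}{\my-1}$ as $t\to\infty$; hence $\norm[\Liom]{u(\cdot,t)}\le\max\{\norm[\Liom]{u_0},\kappa/(\my-1)\}$ uniformly in $t$. In the case $\my>1,\kappa=0$, the bound $\frac{\norm[\Liom]{u_0}}{1+(\my-1)\norm[\Liom]{u_0}t}$ is decreasing and thus controlled by $\norm[\Liom]{u_0}$, while for $\my=1,\kappa=0$ the bound is simply the constant $\norm[\Liom]{u_0}$. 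This covers exactly the hypothesis ``$\my>1$ or $\kappa=0$'' (the remaining subcase $\my=1,\kappa>0$ is excluded since its bound $\norm[\Liom]{u_0}e^{\kappa t}$ is not uniform).

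To pass from boundedness of $u$ to boundedness of $v$ I would apply Lemma \ref{thm:vlpleu} with $p=\infty$, which gives $\norm[\Liom]{v(\cdot,t)}\le\norm[\Liom]{u(\cdot,t)}$ for every $t>0$; combined with the previous step this yields the claimed uniform bound on $v$ in $\Om\times(0,\infty)$. No genuine obstacle arises, since the analytic work (the approximation argument underlying Corollary \ref{thm:limprobbounds} and the ODE comparison of Lemma \ref{thm:epsprobbounds}) has already been performed; the only care needed is to check the case distinction in the hypothesis against the four explicit formulae and to remember that the excluded subcase $\my=1,\kappa>0$ is precisely the one whose bound fails to be uniform in time.
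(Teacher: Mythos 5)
Your proposal is correct and matches the paper's (implicit) argument: the paper states this proposition with the remark that it follows ``without further labour'' from Corollary \ref{thm:limprobbounds}, and you correctly spell out that bookkeeping—global existence and uniqueness come directly from that corollary, uniform boundedness of $u$ follows from inspecting the three explicit formulae corresponding to the hypothesis ``$\my>1$ or $\kappa=0$'', and boundedness of $v$ follows via Lemma \ref{thm:vlpleu} with $p=\infty$. The only (harmless) omission is the trivial case $u_0\equiv 0$, which Corollary \ref{thm:limprobbounds} excludes from its explicit formulae.
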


\subsection{Blow-up for small $\my$}
The contrasting -- and more interesting -- case is that of small values of $\my$. Here we will show blow-up. We borrow the following technical tool from \cite{winkler_14_ctexceed}:
\begin{lemma}
\label{thm:prepare_blowup_estimate}
 Let $a>0, b\geq 0, d>0, \kappa>1$ be such that
\[
 a>(\frac{2b}{d})^\frac1\kappa.
\]
Then if for some $T>0$ the function $y\in C([0,T))$ is nonnegative and satisfies 
\[
 y(t)\geq a-bt+d\intnt y^\kappa(s)ds
\]
for all $t\in(0,T)$, we necessarily have 
\[
 T\leq\frac2{(\kappa-1)a^{\kappa-1}d}
\]
\end{lemma}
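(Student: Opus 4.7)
The plan is to reduce the integral inequality to a comparison with the blow-up ODE $w' = \tfrac d2 w^\kappa$ with $w(0)=a$, whose solution explodes precisely at time $\tfrac{2}{(\kappa-1) a^{\kappa-1} d}$. The standing assumption $a>(2b/d)^{1/\kappa}$, rewritten as $\tfrac d2 a^\kappa > b$, will be used exactly to absorb the $-bt$ nuisance into half of the $dy^\kappa$ contribution.

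Concretely, first I would introduce
\[
 Y(t) := \intnt y^\kappa(s)\,ds, \qquad z(t) := a - bt + dY(t),
\]
so that $z \in C^1([0,T))$ with $z'(t) = -b + d y^\kappa(t)$ and $z(0)=a$, and the hypothesis reads $y(t) \geq z(t)$ on the set where $z(t) \geq 0$; combined with $y \geq 0$ this gives $y(t) \geq \max\{0,z(t)\}$ throughout $[0,T)$.

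The key intermediate step, which is the main obstacle to navigate carefully, is to show that
\[
 z(t) \geq a \qquad \text{for all } t\in[0,T).
\]
I would argue by a continuity/bootstrap: set $t_* := \sup\setl{t\in[0,T) : z(s)\geq a \text{ for all }s\in[0,t]}$, which is positive because $z(0)=a$ and $z$ is continuous. On $[0,t_*)$ one has $y(s)\geq z(s)\geq a$, hence
\[
 z'(s) = -b + dy^\kappa(s) \geq -b + da^\kappa > -b + 2b = b > 0,
\]
using $da^\kappa > 2b$. Thus $z$ is strictly increasing on $[0,t_*)$, so $z(t_*)>a$, and by continuity the defining inequality $z\geq a$ persists past $t_*$ unless $t_*=T$. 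Hence $t_*=T$.

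Once $z\geq a$ is established, the inequality $da^\kappa > 2b$ combined with $z\geq a$ yields $b < \tfrac d2 z^\kappa$, so
\[
 z'(t) = -b + d y^\kappa(t) \geq -b + d z^\kappa(t) \geq \tfrac{d}{2} z^\kappa(t), \qquad t\in[0,T),
\]
where I used $y\geq z$ once more. Comparing $z$ with the solution $w$ of $w'=\tfrac d2 w^\kappa$, $w(0)=a$, which is given explicitly by $w(t) = \bigl(a^{1-\kappa} - \tfrac{(\kappa-1)d}{2}t\bigr)^{-1/(\kappa-1)}$ and blows up at $T^* := \tfrac{2}{(\kappa-1) a^{\kappa-1} d}$, a standard ODE comparison argument (as in Lemma \ref{thm:owngronwall}, applied after shifting initial data by an arbitrarily small amount) gives $z(t)\geq w(t)$ on $[0, \min\{T,T^*\})$. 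If $T>T^*$ were to hold, then $z(t)\to\infty$ as $t\upto T^*$; but $z$, being continuous on the compact interval $[0,T^*]\subset [0,T)$, is bounded there — a contradiction. Therefore $T\leq T^* = \tfrac{2}{(\kappa-1) a^{\kappa-1} d}$, which is the claim.
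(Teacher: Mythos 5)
Your proof is correct. The paper itself does not supply a proof of this lemma; it simply cites \cite[Lemma 4.9]{winkler_14_ctexceed}, so there is no in-text argument to compare against. Your route---pass to the $C^1$ antiderivative $z(t)=a-bt+dY(t)$, use a bootstrap to establish $z\geq a$ on $[0,T)$, absorb $-b$ into half of $d z^\kappa$ via the hypothesis $da^\kappa>2b$, and then compare with the exploding ODE $w'=\tfrac d2 w^\kappa$, $w(0)=a$---is the standard and expected one, and every step goes through. One small imprecision worth tightening: you justify $t_*>0$ merely ``because $z(0)=a$ and $z$ is continuous,'' but continuity alone does not preclude $z$ from dipping below $a$ immediately; what you actually need (and what your very next line provides) is $z'(0)=-b+dy^\kappa(0)\geq -b+da^\kappa>0$, combined with continuity of $z'$, to guarantee $z\geq a$ on a right-neighbourhood of $0$. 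Also, the comparison $z\geq w$ can be obtained directly without the $\varepsilon$-shift by noting $\frac{d}{dt}z^{1-\kappa}=(1-\kappa)z^{-\kappa}z'\leq (1-\kappa)\tfrac d2$, integrating to get $z^{1-\kappa}(t)\leq a^{1-\kappa}-\tfrac{(\kappa-1)d}{2}t$, and observing that positivity of the left side forces $t<\tfrac{2}{(\kappa-1)a^{\kappa-1}d}$; this shortcut bypasses the explicit solution $w$ and the limiting argument entirely, but your version is also fine.
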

\begin{proof}
 \cite[Lemma 4.9]{winkler_14_ctexceed}.
\end{proof}

To be of any use to us, this estimate must be accompanied by lower bounds for (some norm of) $u$. We prepare those by the following lemma %(cf. \cite[L. 4.8]{winkler_14_ctexceed}).
\begin{lemma}
\label{thm:ulpgeq}
 Let $\kappa\geq 0$ and $\my>0$. For all $p>1$ and $\eta>0$ there is  $B(\eta,p)>0$ such that for all $q>1$ all \weqsoln s $(u,v)$ of\eqref{eq:limprob} with nonnegative $u_0$ in $\Om\times(0,T)$ satisfy
 \[
  \intom u^p(t)\geq \intom u_0^p+((1-\my)p-1-\eta)\intnt\intom u^{p+1}-B(p,\eta)\intnt\left(\intom u\right)^{p+1}\quad \text{for all } t\in(0,T).
 \]
\end{lemma}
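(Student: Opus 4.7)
The plan is to establish the desired inequality first at the level of the smooth approximating solutions $(u_\eps,v_\eps)$ of the parabolic-elliptic system \eqref{eq:epsprob} and then pass to the limit $\eps\downto 0$. Choose compatible $u_{0\eps}\in\Ceombar$, radial and nonnegative, with $u_{0\eps}\to u_0$ in $\Weqom$ (Lemma \ref{thm:compatibleapproximation}); by Lemma \ref{thm:shorttimeexistencelimprob} together with the uniqueness statement in Lemma \ref{thm:unique}, the corresponding classical solutions $u_\eps$ converge to the given $u$ uniformly on $\Ombar\times[0,t]$ for every $t<T$ (gluing short-time approximations by means of uniqueness whenever $t$ exceeds the directly available joint time of existence).

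Testing the first equation of \eqref{eq:epsprob} by $u_\eps^{p-1}$ and using $\Lap v_\eps=v_\eps-u_\eps$ from the second equation to rewrite the chemotactic contribution (cf.\ the derivation in Lemma \ref{thm:ddtulp}, but \emph{retaining} the term $-\tfrac{p-1}{p}\intom u_\eps^p v_\eps$ that was discarded there in order to obtain an upper bound) produces the exact identity
\[
\ddt\intom u_\eps^p=-p(p-1)\eps\intom u_\eps^{p-2}|\na u_\eps|^2+(p(1-\my)-1)\intom u_\eps^{p+1}-(p-1)\intom u_\eps^p v_\eps+p\kappa\intom u_\eps^p.
\]
The cross-term $(p-1)\intom u_\eps^pv_\eps$ is then controlled via Young's inequality with exponents $\tfrac{p+1}{p}$ and $p+1$ and a small parameter $\delta>0$, which gives $u_\eps^pv_\eps\leq\tfrac{p\delta}{p+1}u_\eps^{p+1}+\tfrac{\delta^{-p}}{p+1}v_\eps^{p+1}$; combining with Lemma \ref{thm:vlppeleule} applied with a second small parameter $\eta'>0$ then yields
\[
(p-1)\intom u_\eps^p v_\eps\leq\eta\intom u_\eps^{p+1}+B(\eta,p)\left(\intom u_\eps\right)^{p+1}
\]
for suitable choices $\delta=\delta(\eta,p)$ and $\eta'=\eta'(\eta,p)$ and some constant $B(\eta,p)>0$ which is independent of $\eps$.

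Inserting this estimate, dropping the nonnegative term $p\kappa\intom u_\eps^p$ (which only strengthens a lower bound), and integrating in time from $0$ to $t$ delivers
\[
\intom u_\eps^p(t)+p(p-1)\eps\intnt\intom u_\eps^{p-2}|\na u_\eps|^2\geq \intom u_{0\eps}^p+((1-\my)p-1-\eta)\intnt\intom u_\eps^{p+1}-B(\eta,p)\intnt\left(\intom u_\eps\right)^{p+1}.
\]
The $\eps$-term on the left tends to $0$: the uniform $\Liom$-bound on $u_\eps$ together with the uniform $\Lqom$-bound on $\na u_\eps$ granted by Corollary \ref{thm:theestimate} (recall $q>n\geq2$) yields a bound on $\intom u_\eps^{p-2}|\na u_\eps|^2=\tfrac{4}{p^2}\intom|\na u_\eps^{p/2}|^2$ that is uniform in $\eps$ and in $t\in[0,T(D)]$, so the prefactor $\eps$ kills the whole expression. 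All other integrals pass to their counterparts involving $u$ by the uniform convergence $u_\eps\to u$ on $\Ombar\times[0,t]$ together with $u_{0\eps}\to u_0$ in $\Liom$ (which follows from $\Weqom\embeddedinto\Liom$ for $q>n$), yielding the claim.

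The principal technical point I expect is the gluing step extending the short-time approximation of Lemma \ref{thm:shorttimeexistencelimprob} to any subinterval $[0,t]\subset[0,T)$ on which $u$ is known to be bounded; once this is in place, the remaining computation is routine.
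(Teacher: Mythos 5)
Your proposal takes a genuinely different route from the paper's. The paper's proof of this lemma works directly with the weak formulation of Definition \ref{def:soln} at the level of the $W^{1,q}$-solution $(u,v)$ itself: it tests with $\phii(x,t)=\chi_\delta(t)\frac1h\int_{t-h}^t(u(x,s)+\xi)^{p-1}\,ds$, carefully passes $\delta\downto 0$, $h\downto 0$, $\xi\downto 0$, and never touches the $\eps$-approximations. You instead prove the inequality for the approximating $(u_\eps,v_\eps)$ and then pass to the limit. The paper's route is self-contained at the hyperbolic-elliptic level and avoids the viscosity term altogether; your route imports the convergence machinery of Lemmas \ref{thm:kgztosolnifbd} and \ref{thm:shorttimeexistencelimprob}, which makes the testing step easier but creates two issues.

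The substantive gap is in the claim that the viscosity term vanishes in the limit. You assert that the uniform $L^\infty$-bound on $u_\eps$ and the uniform $L^q$-bound on $\na u_\eps$ yield a uniform bound on $\intom u_\eps^{p-2}|\na u_\eps|^2=\tfrac4{p^2}\intom|\na u_\eps^{p/2}|^2$. This is correct for $p\geq 2$ (then $u_\eps^{p-2}\leq\|u_\eps\|_\infty^{p-2}$ and $\|\na u_\eps\|_{L^2}\leq|\Om|^{\frac12-\frac1q}\|\na u_\eps\|_{L^q}$), but the lemma asserts the inequality for all $p>1$, and for $1<p<2$ the exponent $p-2$ is negative, so $u_\eps^{p-2}$ is large wherever $u_\eps$ is small. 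Since $u_0$ is only assumed nonnegative (not strictly positive), and the strong-maximum-principle lower bound on $u_\eps$ degenerates as $t\downto 0$ and as $\eps\downto 0$, neither the $L^\infty$-bound nor the $W^{1,q}$-bound gives control on $\intom u_\eps^{p-2}|\na u_\eps|^2$ uniformly in $\eps$. From the energy identity \eqref{eq:dreieins} you only obtain $\eps\intnt\intom u_\eps^{p-2}|\na u_\eps|^2\leq C$ uniformly, not that it tends to $0$. The natural repair is to test with $(u_\eps+\xi)^{p-1}$ for fixed $\xi>0$, which caps the viscosity term by $\eps(p-1)\xi^{p-2}\|\na u_\eps\|_{L^2}^2\to 0$ as $\eps\downto 0$, redo the remaining estimates (in particular write $u_\eps=(u_\eps+\xi)-\xi$ in the cross term before integrating by parts), and only then send $\xi\downto 0$ by dominated convergence. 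You should carry this out rather than test with $u_\eps^{p-1}$ directly.

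The gluing step you flag is also real. Lemma \ref{thm:shorttimeexistencelimprob} only gives convergence $u_\eps\to u$ on $[0,T(D)]$ with $D$ depending on $\|u_0\|_{W^{1,q}}$, and the lemma you want to prove needs the inequality for all $t<T$. You do get the extension by a bootstrap: on any $[0,t^*]\subset[0,T)$ the limit $u$ is bounded, say by $M$, so as long as $u_\eps\to u$ uniformly up to some $t_1<t^*$ the $u_\eps$ are bounded by $M+1$ there, whence Corollary \ref{thm:theestimate} gives a uniform $W^{1,q}$-bound at time $t_1$, and Theorem \ref{thm:commonexistencetime} and Lemma \ref{thm:kgztosolnifbd} (restarted at $t_1$, with uniqueness to identify the limit) extend the convergence by a fixed amount $T(\bar D)>0$ independent of $t_1$. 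This is a couple of lines but not literally contained in the cited statements, and you should spell it out since it is not, as you say, ``routine'' in the sense of being already proved in the paper.
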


\begin{proof}
 The same testing procedure as in \cite[Lemma 4.8]{winkler_14_ctexceed} leads to success. We repeat it (with the necessary adaptions) for the sake of completeness, because Lemma \ref{thm:ulpgeq} is a main building block of the blow-up result.\\
 Let $T_0\in(0,T)$, $t_0\in(0,T_0)$, $\delta\in(0,T-t_0)$, $\chi_\delta$ as in the proof of Lemma \ref{thm:unique}:
\[
 \chi_\delta(t):=\begin{cases}1&t<t_0,\\
		  \frac{t_0-t+\delta}{\delta}&t\in[t_0,t_0+\delta],\\
                  0&t>t_0+\delta.
                 \end{cases}
\] 
For each $\xi>0$, the function $(u+\xi)^{p-1}$ belongs to $L^\infty_{loc}([0,T),\Weqom)$ and for $\delta\in(0,T_0-t_0), h\in(0,1),\xi>0$,
\[
 \phii(x,t):=\chi_\delta(t)\frac1h\int_{t-h}^t(u(x,s)+\xi)^{p-1}ds,\qquad  (x,t)\in\Om\times(0,T)
\]
is a test function for \eqref{eq:defsoln}, if we set $u(\cdot,t)=u_0$ for $t<0$. This yields
\begin{align*}
\frac1\delta\int_{t_0}^{t_0+\delta}&\intom u(x,t)\frac1h\int_{t-h}^t(u(x,s)+\xi)^{p-1}dsdxdt- \intom u_0(x)(u_0(x)+\xi)^{p-1}dx\\
&\qquad\qquad\qquad\qquad\qquad\qquad\qquad-\intnT\intom \chi_\delta(t)u(x,t)\frac{(u(x,t)+\xi)^{p-1}-(u(x,t-h)+\xi)^{p-1}}hdxdt\\
=&(p-1)\intnT\intom \chi_\delta(t)u(x,t)\na v(x,t) \frac1h\int_{t-h}^t(u(x,t)+\xi)^{p-2}\na u(x,t)dsdxdt\\ 
&\qquad\qquad\qquad\qquad\qquad\qquad+\kappa\intnT\intom \chi_\delta(t)u(x,t)\frac 1h\int_{t-h}^t (u(x,s)+\xi)^{p-1}dsdxdt\\
&\qquad\qquad\qquad\qquad\qquad\qquad\qquad\qquad\qquad -\my\intnT\intom \chi_\delta(t)u^2(x,t)\frac1h\int_{t-h}^t(u(x,s)+\xi)^{p-1}dsdxdt.
\end{align*}
For the sake of brevity, we will omit several instances of $(x,t)$ and $(x,s)$, when there is no danger of confusion. 
Let $\delta$ tend to $0$, use continuity of $u$ and Lebesgue's theorem %($u$ bounded as $q>n$)
to obtain
\begin{align}
\label{eq:456}
 &\intom u(\cdot,t_0)\frac1h\int_{t_0-h}^{t_0}(u(x,s)+\xi)^{p-1}ds
-\intntn\intom u(x,t)\frac{(u(x,t)+\xi)^{p-1}-(u(x,t-h)+\xi)^{p-1}}h\notag\\
&\qquad\qquad\qquad\qquad\qquad\qquad\qquad\qquad\qquad\qquad\qquad\qquad\qquad\qquad\qquad\qquad-\intom u_0(u_0+\xi)^{p-1}\notag\\
&\qquad=(p-1)\intntn\intom u\na v\frac1h\int_{t-h}^t(u+\xi)^{p-2}\na u+\kappa\intntn\intom u\frac1h\int_{t-h}^t(u(x,s)+\xi)^{p-1}\\
&\qquad\qquad\qquad\qquad\qquad\qquad\qquad\qquad\qquad\qquad\qquad\qquad-\my\intntn\intom u^2(x,t)\frac1h\int_{t-h}^t(u+\xi)^{p-1}.\notag
\end{align}
Consider the second integral on the left hand side:
\begin{align*}
 -\intntn\intom u(x,t)&\frac{(u(x,t)+\xi)^{p-1}-(u(x,t-h)+\xi)^{p-1}}h\\
&=-\frac1h\intntn\intom (u(x,t)+\xi)^p+\frac1h\intntn\intom(u(x,t)+\xi)(u(x,t-h)+\xi)^{p-1}\\
&\qquad+\frac\xi h\intntn\intom(u(x,t)+\xi)^{p-1}-\frac\xi h\intntn \intom (u(x,t-h)+\xi)^{p-1}&=I_1+I_2
\end{align*}
where upon an application of Young's inequality, 
\begin{align*}
 I_1&\leq -\frac1h\intntn\intom (u(x,t)+\xi)^p+\frac1{ph}\intntn\intom (u(x,t)+\xi)^p+\frac{p-1}p\frac1h\intntn \intom (u(x,t-h)+\xi)^p\\
&=-\frac{p-1}{ph}\intntn\intom (u(x,t)+\xi)^p+\frac{p-1}{ph}\intntn\intom(u(x,t-h)+\xi)^p\\
&=\frac{p-1}{ph}\left[h\intom (u_0(x)+\xi)^p-\int_{t_0-h}^{t_0}\intom(u(x,t)+\xi)^p\right]
\end{align*}
and by similar cancellations as in the last step
\begin{align*}
 I_2
&=-\xi\intom(u_0(x,t)+\xi)^{p-1}+\frac\xi h \int_{t_0-h}^{t_0}\intom(u(x,t)+\xi)^{p-1}.
\end{align*}
Hence, again by continuity of $u$,
\begin{align*}
 \limsup_{h\to 0} &\left(-\intntn\intom u\frac{(u+\xi)^{p-1}-(u(x,t-h)+\xi)^{p-1}}h\right)\\
&\leq \frac{p-1}p\intom (u_0+\xi)^p-\frac{p-1}p\intom (u(x,t_0)+\xi)^p - \xi\intom(u_0+\xi)^{p-1}+\xi\intom  (u(x,t_0)+\xi)^{p-1}.
\end{align*}

Let $\psi\in L^{q'}(\Om\times(0,T_0),\Rn)$, where $\frac1{q'}+\frac1q=1$.
Then by $u\in L^\infty((-1,T),\Weqom)$, 
\[
 \intom\intnt\frac1h\int_{t-h}^t (u(x,s)+\xi)^{p-2}\na u(x,s)ds\cdot\psi(x,t)dxdt \to \intom\intnt (u(x,t)+\xi)^{p-2}\na u(x,t)\cdot\psi(x,t) dxdt
\]
Therefore, as $h\to 0$, \eqref{eq:456} becomes
\begin{align*}
 \intom u(x,t_0)(u(x,&t_0)+\xi)^{p-1}+\frac{p-1}p\intom(u_0(x)+\xi)^p-\frac{p-1}p\intom(u(x,t_0)+\xi)^p\\
 &\qquad\qquad\qquad\qquad\qquad-\xi\intom(u_0+\xi)^{p-1}+\xi\intom(u(x,t_0)+\xi)^{p-1}-\intom u_0(u_0+\xi)^{p-1}\\
 &\geq ( p-1)\intntn\intom u\na v(u+\xi)^{p-2}\na u+\kappa\intntn\intom u(u+\xi)^{p-1}-\my\intntn\intom u^2(u+\xi)^{p-1}\\
 &\geq (p-1)\intntn\intom u\na v(u+\xi)^{p-2}\na u-\my\intntn\intom u^2(u+\xi)^{p-1}.
\end{align*}
Because $p>1$, $u(u+\xi)^{p-2}\to u^{p-1}$ uniformly as $\xi\to0$. In this limit we therefore obtain
%$u$ is continuous, $p>1$, hence $u(u+\xi)^{p-2}\to u^{p-1}$ uniformly. Letting $\xi\to 0$, we obtain 
\begin{align*}
 \intom u^p(\cdot,t_0)+\frac{p-1}p\intom u_0^p-\frac{p-1}p\intom u^p(\cdot,t_0)-\intom u_0^p
\geq (p-1)\intntn\intom u^{p-1}\na v\na u-\my\intntn\intom u^{p+1}.
\end{align*}
This is equivalent to the following inequality, where we can use the elliptic equation of \eqref{eq:limprob} to express $\Lap v$ differently.
\begin{align*}
 \frac1p\intom u^p(\cdot,t_0)-\frac1p\intom u_0^p&\geq -\frac{p-1}p\intntn\intom u^p\Lap v-\my\intntn\intom u^{p+1}\\
&=\frac{p-1}p\intntn\intom u^{p+1}-\frac{p-1}p\intntn\intom u^pv-\my\intntn\intom u^{p+1}.
\end{align*}
Here Young's inequality and Lemma \ref{thm:vlppeleule} 
 provide constants $C_1$ and $\ctilde$ respectively, such that
\begin{align*}
\frac1p\intom u^p(\cdot,t_0)-\frac1p\intom u_0^p&\geq(\frac{p-1}p-\my)\intntn\intom u^{p+1}-\frac\eta{2p}\intntn\intom u^{p+1}-C_1\intntn\intom v^{p+1}\\
&\geq(\frac{p-1}p-\my-\frac\eta{2p})\intntn\intom u^{p+1}-\frac\eta{2p}\intntn\intom u^{p+1}-\ctilde\intntn\left(\intom u\right)^{p+1}\\
&=\frac1p((1-\my)p-1-\eta)\intntn\intom u^{p+1}-\ctilde\intntn\left(\intom u\right)^{p+1}.\qquad\qquad\qedhere
\end{align*}
\end{proof}

These lemmata can be utilized to decide, which alternative of Theorem \ref{thm:existence} occurs for $\my<1$. It is the same as in case of dimension one (see \cite[Thm. 1.4]{winkler_14_ctexceed}) and can be proven almost identically:
 \begin{thm}
\label{thm:blowup}
  Let $\kappa\geq 0, \my\in(0,1)$. For all $p>\frac1{1-\my}$ there is $C(p)>0$ with the following property:
 Whenever $q>n$ and $u_0\in\Weqom$ is nonnegative, radial and %radial for existence of solution 
 \[
  \norm[\Lpom]{u_0}>C(p)\max\setl{\frac1{|\Om|}\int u_0,\frac \kappa \my},
 \]
 the strong \weqsoln\ of \eqref{eq:limprob} blows up in finite time, i.e. in Theorem \ref{thm:existence}, we have $\Tmax<\infty$ and $\limsuptTmax\norm[\Liom]{u(\cdot,t)}=\infty$.
 \end{thm}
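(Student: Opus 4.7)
The proof would proceed by contradiction, combining the differential inequality of Lemma \ref{thm:ulpgeq} with the upper bound on mass from Lemma \ref{thm:limproblebd} and applying the ODE-style blow-up criterion Lemma \ref{thm:prepare_blowup_estimate}. The plan is to set $y(t) := \intom u^p(\cdot,t)$ and derive an integral inequality of the form $y(t) \geq a - bt + d\intnt y^\kappa$ on $[0,\Tmax)$ whose constants $a,b,d,\kappa$ can be compared through Lemma \ref{thm:prepare_blowup_estimate} to force $\Tmax$ to be finite; by the extensibility alternative \eqref{eq:ext} this yields $\limsuptTmax\norm[\Liom]{u(\cdot,t)}=\infty$.

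More concretely, since $p > \tfrac{1}{1-\my}$, one has $(1-\my)p - 1 > 0$, so I would fix $\eta > 0$ small enough that $\alpha := (1-\my)p - 1 - \eta > 0$, and let $B = B(p,\eta)$ be the constant from Lemma \ref{thm:ulpgeq}. By H\"older's inequality, $\intom u^{p+1} \geq |\Om|^{-1/p}(\intom u^p)^{(p+1)/p}$, so that lemma provides
\[
y(t) \geq \intom u_0^p + \alpha |\Om|^{-1/p}\intnt y(s)^{(p+1)/p}\,ds - B\intnt\left(\intom u(\cdot,s)\right)^{p+1} ds.
\]
Using Lemma \ref{thm:limproblebd} to estimate $\intom u(\cdot,s) \leq M_1 := \max\{\intom u_0, \kappa|\Om|/\my\}$ uniformly on $[0,\Tmax)$, this becomes $y(t) \geq a - bt + d\intnt y(s)^\kappa\,ds$ with $a = \norm[\Lpom]{u_0}^p$, $b = B M_1^{p+1}$, $d = \alpha |\Om|^{-1/p}$ and $\kappa = (p+1)/p > 1$.

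The assumption $\norm[\Lpom]{u_0} > C(p)\max\{\tfrac1{|\Om|}\intom u_0, \kappa/\my\}$ with the explicit choice
\[
C(p) := |\Om|\left(\frac{2B\,|\Om|^{1/p}}{\alpha}\right)^{1/(p+1)}
\]
translates precisely into $a^\kappa = a^{(p+1)/p} = \norm[\Lpom]{u_0}^{p+1} > 2b/d$, i.e. the smallness hypothesis $a > (2b/d)^{1/\kappa}$ of Lemma \ref{thm:prepare_blowup_estimate}. Since $y \in C([0,\Tmax))$ is nonnegative (as $u \in C(\Ombar\times[0,\Tmax))$), that lemma yields the explicit bound $\Tmax \leq \tfrac{2}{(\kappa-1)a^{\kappa-1}d} < \infty$; invoking \eqref{eq:ext} from Theorem \ref{thm:existence} then gives $\limsuptTmax\norm[\Liom]{u(\cdot,t)} = \infty$.

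The routine verification of Lemma \ref{thm:ulpgeq} already being in hand, I do not expect a serious obstacle here; the main bookkeeping issue is just choosing $\eta$ and keeping track of the explicit dependence so that $C(p)$ can be read off as a function of $p$, $\my$, $|\Om|$ alone (in particular independent of $\kappa$ and $\intom u_0$). Note also that one must observe the absence of $\kappa$ in $C(p)$: the $\kappa$-contribution is fully absorbed into the factor $M_1$ on the right-hand side of the hypothesis, which is the reason for separating the two cases in the $\max$ appearing in Lemma \ref{thm:limproblebd}.
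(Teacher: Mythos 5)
Your proposal is correct and follows essentially the same route as the paper: apply Lemma \ref{thm:ulpgeq} with H\"older's inequality to $y(t)=\intom u^p$, control the mass term via Lemma \ref{thm:limproblebd}, invoke Lemma \ref{thm:prepare_blowup_estimate}, and conclude via the extensibility alternative \eqref{eq:ext}. The paper simply makes the concrete choice $\eta=\tfrac{(1-\my)p-1}{2}$ so that $\alpha=\eta$, which after simplification yields the same $C(p)$ you wrote down.
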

 \begin{proof}
 Let $\eta=\frac{(1-\my)p-1}2>0$, $B(p,\eta)$ as in Lemma \ref{thm:ulpgeq},
\[
 C(p):=\left(\frac{4B(\eta,p)}{(1-\my)p-1}\right)^\frac{1}{p+1}|\Om|^{1+\frac{1}{p(p+1)}}.
\]
Suppose that $\norm[\Lpom]{u_0}>C(p)\max\set{\frac1{|\Om|}\intom u_0,\frac \kappa\my}$ and the corresponding \weqsoln\ of \eqref{eq:limprob} (from Theorem \ref{thm:existence}) is global in time, i.e. $\Tmax=\infty$.

Let $y(t):=\intom u^p(x,t)dx$ for $t\geq 0$. This would define a continuous function on $[0,\infty)$. %($u(\cdot,t)\in\Weqom\eingebettetin\Liom, u\in C(\Ombar\times[0,T))$)

According to Lemma \ref{thm:ulpgeq}, $\kappa\geq 0, \my>0, p>\frac1{1-\my}>1$ and the choice of $B(\eta,p)$ make $y$ satisfy 
\begin{align*}
 y(t)&\geq\intom u_0^p+((1-\my)p-1-\eta)\intnt\intom u^{p+1}ds -B(\eta, p)\intnt\left(\intom u\right)^{p+1}\\
 &\geq y(0)+\frac{(1-\my)p-1}{2}|\Om|^{-\frac1p}\intnt[y(s)]^\frac{p+1}p-B(p,\eta)\intnt\left(\intom u\right)^{p+1} 
\end{align*}
for all $t\geq 0$. 
By Lemma \ref{thm:limproblebd}, for all $t\geq 0$ 
\[
 \intnt\left(\intom u\right)^{p+1}ds\leq \intnt (|\Om|\mhat)^{p+1}ds\leq |\Om|^{p+1}\mhat^{p+1}t, 
\]
where $\mhat=\max\set{\frac1{|\Om|}\intom u_0,\frac\kappa\my}.$
Therefore 
\[
 y(t)\geq y(0)+\frac{(1-\my)p-1}{2}|\Om|^{-\frac1p}\intnt y^{\frac{p+1}p}(s)ds-B(p,\eta)|\Om|^{p+1}\mhat^{p+1}t
\]
for all $t\geq 0$. 
An application of Lemma \ref{thm:prepare_blowup_estimate} with 
$a=y(0)$, $b=B(p,\eta)|\Om|^{p+1}\mhat^{p+1}$, $d=\frac{(1-\my)p-1}2|\Om|^{-\frac1p}$, $\kappa=\frac{p+1}p$
now allows to conclude from 
\[
 a\left(\frac{2b}d\right)^{-\frac1\kappa}=\norm[\Lpom]{u_0}^p\left(\frac{4 B(p,\eta)|\Om|^{p+1}\mhat^{p+1}|\Om|^\frac1p}{(1-\my)p-1}\right)^{-\frac p{p+1}}>1
\]
that -- contradicting our assumption and proving the theorem -- $\Tmax$ must be finite.
 \end{proof}

\section{No thresholds on population density}
\label{sec:nothreshold}

Let us now, finally, prove the main result, corresponding to \cite[Thm. 1.1]{winkler_14_ctexceed} and expanding this to higher dimensional space.

\begin{proof}[Proof of Theorem \ref{thm:main}] (See \cite[Thm. 1.1]{winkler_14_ctexceed}).
 Let $u_0\in\Weqom$ be as in the statement of the theorem and let $T>0$ denote the maximal existence time of the corresponding solution of \eqref{eq:limprob}.
We then know by Theorem \ref{thm:blowup} that $T<\infty$ and 
\begin{equation}
 \label{eq:expl}
 \limsup_{t\upto T}\norm[\Liom]{u(\cdot,t)}=\infty.
\end{equation}
In the following only consider solutions of \eqref{eq:epsprob} that exist at least until time $T$. 
All other solutions blow up earlier according to Lemma \ref{thm:epslocalexistence} and therefore trivially satisfy the theorem.
 
Assume that Theorem \ref{thm:main} were not true. Then there would be $M>0$ and a sequence $\eps_j\to 0$ such that 
\[
 u_{\eps_j}(x,t)\leq M
\]
for all $(x,t)\in\Om\times(0,T)$ and $j\in\N$. 
Therefore, we would obtain convergence by Lemma \ref{thm:kgztosolnifbd}:
\[
 u_{\eps_j}\to\utilde \quad\textrm{in  } C(\Ombar\times[0,T])
\]
and
\[
 v_{\eps_j}\to \vtilde \quad\textrm{ in } C^{2,0}(\Ombar\times[0,T]),
\]
as $j\to \infty$, where $(\utilde,\vtilde)$ is a strong solution of \eqref{eq:limprob}. Because such solutions are unique, $(\utilde,\vtilde)=(u,v)$ and in particular $u=\utilde\leq M$ in $\Om\times(0,T)$, contradicting \eqref{eq:expl}.
\end{proof}

% \bibliographystyle{abbrv}
% \bibliography{lit.bib}

\begin{thebibliography}{10}

\bibitem{andasari_gerisch_lolas_south_chaplain_11}
V.~Andasari, A.~Gerisch, G.~Lolas, A.~P. South, and M.~A.~J. Chaplain.
\newblock Mathematical modeling of cancer cell invasion of tissue: biological
  insight from mathematical analysis and computational simulation.
\newblock {\em J. Math. Biol.}, 63(1):141--171, 2011.

\bibitem{aotani_mimura_mollee_10}
A.~Aotani, M.~Mimura, and T.~Mollee.
\newblock A model aided understanding of spot pattern formation in chemotactic
  {\it {e}. coli} colonies.
\newblock {\em Jpn. J. Ind. Appl. Math.}, 27(1):5--22, 2010.

\bibitem{evans_98}
L.~C. Evans.
\newblock {\em Partial differential equations}, volume~19 of {\em Graduate
  Studies in Mathematics}.
\newblock American Mathematical Society, Providence, RI, 1998.

\bibitem{friedman_08}
A.~Friedman.
\newblock {\em Partial Differential Equations}.
\newblock Dover Books on Mathematics Series. Dover Publications, Incorporated,
  2008.

\bibitem{gilbarg_trudinger_01}
D.~Gilbarg and N.~S. Trudinger.
\newblock {\em Elliptic partial differential equations of second order}.
\newblock Classics in Mathematics. Springer-Verlag, Berlin, 2001.
\newblock Reprint of the 1998 edition.

\bibitem{herrero_velazquez_97}
M.~A. Herrero and J.~J.~L. Vel{\'a}zquez.
\newblock A blow-up mechanism for a chemotaxis model.
\newblock {\em Ann. Scuola Norm. Sup. Pisa Cl. Sci. (4)}, 24(4):633--683
  (1998), 1997.

\bibitem{hillen_painter_09}
T.~Hillen and K.~J. Painter.
\newblock A user's guide to {PDE} models for chemotaxis.
\newblock {\em J. Math. Biol.}, 58(1-2):183--217, 2009.

\bibitem{horstmann_03}
D.~Horstmann.
\newblock From 1970 until present: the {K}eller-{S}egel model in chemotaxis and
  its consequences. {I}.
\newblock {\em Jahresber. Deutsch. Math.-Verein.}, 105(3):103--165, 2003.

\bibitem{horstmann_04}
D.~Horstmann.
\newblock From 1970 until present: the {K}eller-{S}egel model in chemotaxis and
  its consequences. {II}.
\newblock {\em Jahresber. Deutsch. Math.-Verein.}, 106(2):51--69, 2004.

\bibitem{jaeger_luckhaus_92}
W.~J{\"a}ger and S.~Luckhaus.
\newblock On explosions of solutions to a system of partial differential
  equations modelling chemotaxis.
\newblock {\em Transactions of the American Mathematical Society}, 329(2):pp.
  819--824, 1992.

\bibitem{keller_segel_70}
E.~F. Keller and L.~A. Segel.
\newblock Initiation of slime mold aggregation viewed as an instability.
\newblock {\em Journal of Theoretical Biology}, 26(3):399--415, 1970.

\bibitem{ladyzhenskaya_solonnikov_uraltseva_91}
O.~Ladyzhenskaya, V.~Solonnikov, and N.~Uraltseva.
\newblock Linear and quasi-linear equations of parabolic type, translations of
  mathematical monographs vol. 23, 1991.
\newblock {\em American Mathematical Society, Providence, RI}.

\bibitem{mizoguchi_winkler_13}
N.~Mizoguchi and M.~Winkler.
\newblock Blow-up in the two-dimensional parabolic {K}eller-{S}egel system.
\newblock 2013.
\newblock Preprint.

\bibitem{nagai_01}
T.~Nagai.
\newblock Blowup of nonradial solutions to parabolic-elliptic systems modeling
  chemotaxis in two-dimensional domains.
\newblock {\em J. Inequal. Appl.}, 6(1):37--55, 2001.

\bibitem{nagai_senba_yoshida_97}
T.~Nagai, T.~Senba, and K.~Yoshida.
\newblock Application of the {T}rudinger-{M}oser inequality to a parabolic
  system of chemotaxis.
\newblock {\em Funkcial. Ekvac.}, 40(3):411--433, 1997.

\bibitem{osaki_tsujikawa_yagi_mimura_02}
K.~Osaki, T.~Tsujikawa, A.~Yagi, and M.~Mimura.
\newblock Exponential attractor for a chemotaxis-growth system of equations.
\newblock {\em Nonlinear Anal. TMA}, 51(1):119--144, 2002.

\bibitem{stancevic_angstmann_murray_henry_13}
O.~Stancevic, C.~N. Angstmann, J.~M. Murray, and B.~I. Henry.
\newblock Turing patterns from dynamics of early {HIV} infection.
\newblock {\em Bull. Math. Biol.}, 75(5):774--795, 2013.

\bibitem{stinner_surulescu_winkler_13}
C.~Stinner, C.~Surulescu, and M.~Winkler.
\newblock Global weak solutions in a {PDE}-{ODE} system modeling multiscale
  cancer cell invasion.
\newblock Preprint.

\bibitem{szymanska_moralesrodrigo_lachowicz_chaplain_09}
Z.~Szyma{\'n}ska, C.~M. Rodrigo, M.~Lachowicz, and M.~A.~J. Chaplain.
\newblock Mathematical modelling of cancer invasion of tissue: the role and
  effect of nonlocal interactions.
\newblock {\em Math. Models Methods Appl. Sci.}, 19(2):257--281, 2009.

\bibitem{tello_winkler_07}
J.~I. Tello and M.~Winkler.
\newblock A chemotaxis system with logistic source.
\newblock {\em Comm. Partial Differential Equations}, 32(4-6):849--877, 2007.

\bibitem{winkler_10_aggregationvs}
M.~Winkler.
\newblock Aggregation vs. global diffusive behavior in the higher-dimensional
  {K}eller-{S}egel model.
\newblock {\em J. Differential Equations}, 248(12):2889--2905, 2010.

\bibitem{winkler_10_boundedness}
M.~Winkler.
\newblock Boundedness in the higher-dimensional parabolic-parabolic chemotaxis
  system with logistic source.
\newblock {\em Comm. Partial Differential Equations}, 35(8):1516--1537, 2010.

\bibitem{winkler_11_blowupdespiteloggrowth}
M.~Winkler.
\newblock Blow-up in a higher-dimensional chemotaxis system despite logistic
  growth restriction.
\newblock {\em J. Math. Anal. Appl.}, 384(2):261--272, 2011.

\bibitem{winkler_11_blowuphigherdim}
M.~Winkler.
\newblock Finite-time blow-up in the higher-dimensional parabolic-parabolic
  {K}eller-{S}egel system.
\newblock {\em J. Math. Pures Appl. (9)}, 100(5):748--767, 2013.

\bibitem{winkler_14_ctexceed}
M.~Winkler.
\newblock How far can chemotactic cross-diffusion enforce exceeding carrying
  capacities?
\newblock 2014.
\newblock Preprint.

\end{thebibliography}

\end{document}